\documentclass[11pt,
]{amsart}
\usepackage{
amssymb, graphicx
}
\usepackage{mathrsfs} 
\usepackage{amsthm}
\usepackage{graphicx,color}
\usepackage{hyperref}

\newtheorem{theorem}{Theorem}
\newtheorem{lemma}{Lemma}
\newtheorem{proposition}{Proposition}
\newtheorem{definition}{Definition}

\theoremstyle{remark}
\usepackage{upgreek}

 \usepackage{amsmath}
\allowdisplaybreaks[4]

\date{\today}

\title[An inverse problem for Euler's equations]{An inverse problem for compressible Euler's equations}

      \author[G. Uhlmann]{Gunther Uhlmann}
\address{G. Uhlmann: Department of Mathematics, University of Washington, Seattle, WA 98195, USA (\tt{gunther@math.washington.edu})
}
\thanks{G. Uhlmann is partly supported by NSF}

  \author[Y. Yi]{Yuchao Yi}
\address{Department of Mathematics, University of California San Diego, La Jolla, CA 92037, USA (\tt{yuyi@ucsd.edu})}
\thanks{}
  \author[J. Zhai]{Jian Zhai}
\address{School of Mathematical Sciences,
  Fudan University, Shanghai 200433, China
  (\tt{jianzhai@fudan.edu.cn}).}
    \thanks{J. Zhai is supported by National Key Research and Development Programs of China (No. 2023YFA1009103), NSFC(No. 12471396), Science and Technology Commission of Shanghai Municipality (23JC1400501)}

\begin{document}
\begin{abstract}
We consider an inverse problem for the compressible Euler's equations in polytropic fluid. We show that by taking active measurements near a particle trajectory one can determine the background flow in a set where pressure waves can propagate from and return to the particle trajectory, under the additional assumption that the flow has nonzero vorticity.
\end{abstract}
\keywords{Fluid dynamics, inverse boundary value problem}
\maketitle

\section{Introduction}

Consider the Euler's equations in polytropic compressible fluid
\begin{eqnarray}
(\rho \mathbf{v})_t+\nabla\cdot(\rho\mathbf{v}\otimes\mathbf{v})+\nabla p&= \mathbf{f}&\quad \text{in  }\mathbb{R}^+\times\mathbb{R}^3,\\
\rho_t+\nabla\cdot(\rho \mathbf{v})&=0&\quad \text{in  }\mathbb{R}^+\times\mathbb{R}^3,\\
p(\rho)&=A\rho^\gamma&\quad\text{in  }\mathbb{R}^+\times\mathbb{R}^3.
\end{eqnarray}
Here $\mathbf{v}(t,x)$ is the velocity of a particle at $x\in\mathbb{R}^3$ at time $t$, $p(t,x),\rho(t,x)$ represent the pressure and density of the fluid, and $\mathbf{f}$ represents the body acceleration acting on the fluid. We take $A>0$ and $\gamma>1$ to be constants. This is a standard model for ideal gas and typically $\gamma\in (1,5/3]$ according to basic kinetic theory. In particular, this system of equations with $\gamma=1.4$ is a good model for air.

Throughout the paper we rewrite the Euler's equations as
\begin{eqnarray}
\rho\frac{\partial\mathbf{v}}{\partial t}+\rho\mathbf{v}\cdot\nabla\mathbf{v}+A\gamma\rho^{\gamma-1}\nabla \rho&= \mathbf{f}&\quad \text{in  }\mathbb{R}^+\times\mathbb{R}^3,\label{eulereq1}\\
\rho_t+\nabla\cdot(\rho \mathbf{v})&=0&\quad \text{in  }\mathbb{R}^+\times\mathbb{R}^3,\label{eulereq2}
\end{eqnarray}

We first take $(\rho_0,\mathbf{v}_0)$ to be solution of \eqref{eulereq1} and \eqref{eulereq2} with $f=0$, that is,
\begin{align}
\rho_0\frac{\partial \mathbf{v}_0}{\partial t}+\rho_0\mathbf{v}_0\cdot\nabla \mathbf{v}_0+A\gamma\rho_0^{\gamma-1}\nabla\rho_0&=0,\label{eqv0}\\
\frac{\partial\rho_0}{\partial t}+\nabla\cdot(\rho_0\mathbf{v}_0)&=0.
\end{align}
We assume $\rho_0>0$ and $\mathbf{v}_0$ are both smooth in $(0,T)\times\mathbb{R}^3$ for some $T>0$. For the existence of local smooth solutions of the compressible Euler's equations, we refer to discussions in \cite[Chapter 16.2]{taylor2023partial}. The lifespan $T$ of classical solutions is typically finite and we refer to \cite[Chapter 4.8]{dafermos2005hyberbolic} for more discussions.

\subsection{Lorentzian structure}
Denote $c_0^2=A\rho_0^{\gamma-1}$ and $\mathbf{v}_0=(v_0^1,v_0^2,v_0^3)$. We consider $M=(0,T)\times\mathbb{R}^3$ as a Lorentzian manifold with Lorentzian metric 
\begin{equation}\label{lorentzianmetric}
g=(-1+c_0^{-2}\mathbf{v}_0^T\mathbf{v}_0)(\mathrm{d}t)^2-2c_0^{-2}v_{0}^{j}\mathrm{d}x^j\mathrm{d}t+c_0^{-2}\mathrm{d}x^j\mathrm{d}x^j,
\end{equation}
which can be written in matrix form as
  \[
 \left( \begin{array}{cc}
 -1+c_0^{-2}\mathbf{v}_0^T\mathbf{v}_0&-c_0^{-2}\mathbf{v}_0^T\\
 -c_0^{-2}\mathbf{v}_0 &c_0^{-2}I
  \end{array}\right).
  \]
One will see that pressure/acoustic waves in the fluid will propagate under this Lorentzian metric.
  The inverse metric is then
    \[
 \left( \begin{array}{cc}
 -1&-\mathbf{v}_0^T\\
 -\mathbf{v}_0 &c_0^2I-\mathbf{v}_0 \mathbf{v}_0 ^T
  \end{array}\right).
  \]
Recall that a smooth curve $\mu:(a,b)\rightarrow M$ is called causal if $g(\dot{\mu}(s),\dot{\mu}(s))\leq 0$ and $\dot{\mu}(s)\neq 0$ for all $s\in(a,b)$. Given $p,q\in M$, we denote $p\leq q$ if $p= q$ or $p$ can be joined to $q$ by a future-pointing causal curve. We say $p<q$ if $p\leq q$ and $p\neq q$. We denote the causal future of $p\in M$ by $J_g^+(p)=\{q\in M: p\leq q\}$ and the causal past of $q\in M$ by $J_g^-(q)=\{p\in M: p\leq q\}$. The curve $\mu$ is called time-like if $g(\dot{\mu}(s),\dot{\mu}(s))< 0$ for all $s\in(a,b)$. We denote $p\ll q$ if $p\neq q$ and there is a future-pointing time-like path from $p$ to $q$. Then the chronological future of $p\in M$ is the set $I_g^+(p)=\{q\in M \colon p\ll q\}$ and the chronological past of $q\in M$ is $I_g^-(q)=\{p\in M \colon p\ll q\}$. We also denote $J_g(p,q):=J^+_g(p)\cap J^-_g(q)$ and $I_g(p,q):=I^+_g(p)\cap I^-_g(q)$.\\

We take $\mu$ to be one integral curve of vector field $\partial_t+v_{0}^j\partial_{x^j}$. Then $\mu$ is a future-pointing time-like curve. We take $-1<s_-<s_+<1$, and $p_-=\mu(s_-),p_+=\mu(s_+)$. Take $V$ to be an open connected neighborhood of $\mu([-1,1])$. For any $\mathbf{f}\in C_0^m(V)$ with $\|\mathbf{f}\|_{C^m}<\varepsilon$, we define the source-to-solution map $L_V$ by
\[
L_V\mathbf{f}=(\rho,\mathbf{v})\vert_V\in C(V),
\]
where $(\rho,\mathbf{v})$ is the solution to \eqref{eulereq1}-\eqref{eulereq2} with
\[
(\rho,\mathbf{v})\vert_{t=0}=(\rho_0,\mathbf{v}_0)\vert_{t=0}.
\]
The well-definedness of this operator is a consequence of well-posedness of \eqref{eulereq1}-\eqref{eulereq2} with small sources $\mathbf{f}$ (cf. Proposition \ref{localwellposedness} in the following).
The inverse problem under investigation is to use $L_V$ to recover $(\rho_0,\mathbf{v}_0)$ in the set $I_g(p^-,p^+)$. See Figure \ref{fig:setup}.
\begin{figure}[htbp]
\centering
\includegraphics[width=0.5\textwidth]{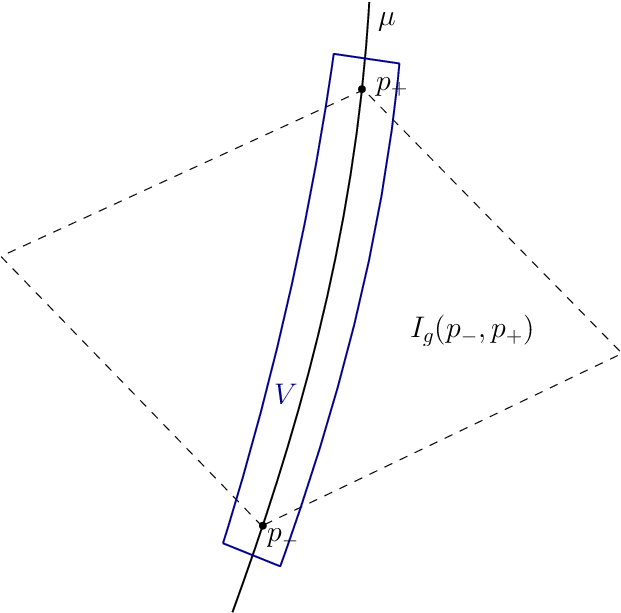}
\caption{$\mu$ is an integral curve of $\partial_t+\mathbf{v}_0\cdot\nabla$ and $p_-=\mu(s_-),p_+\in\mu(s_+)$. $V$ is an open neighborhood of $\mu$. We will recover the background solution in the set $I_g(p_-,p_+)$ where the waves can propagate from  and return back to $\mu([s_-,s_+])$.}
\label{fig:setup}
\end{figure}

\subsection{Well-posedness with small source}
\begin{proposition}\label{localwellposedness}
    Fix $T>0$ and $m\geq 5/2$. If $\|\mathbf{f}\|_{C^m(V)}$ is sufficiently small, then there exists a unique solution 
    \[
    (\rho,\mathbf{v})\in L^\infty([0,T]; H^m(\mathbb{R}^3))\cap W^{1,\infty}([0,T]; H^{m-1}(\mathbb{R}^3))
    \]
    to \eqref{eulereq1}-\eqref{eulereq2} with $(\rho,\mathbf{v})\vert_{t=0}=(\rho_0,\mathbf{v}_0)\vert_{t=0}$.
\end{proposition}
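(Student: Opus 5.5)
We will reduce the statement to the standard local well-posedness theory for quasilinear symmetric hyperbolic systems, applied to the difference between the solution and the background flow $(\rho_0,\mathbf{v}_0)$. One caveat is that the background is only assumed smooth on $(0,T)\times\mathbb{R}^3$, so $(\rho_0,\mathbf{v}_0)$ itself need not lie in $H^m(\mathbb{R}^3)$; in particular $\rho_0>0$ cannot decay. We therefore read the regularity in the statement as a statement about the perturbation $(\rho-\rho_0,\mathbf{v}-\mathbf{v}_0)$. We also assume the background is uniformly smooth up to $t=T$ with $\rho_0$ bounded below, shrinking $T$ slightly if necessary. Since $\mathbf{f}$ is supported in $V$, which is a neighborhood of the compact set $\mu([-1,1])$, we may take $V$ relatively compact, so $\mathbf{f}\in C^m_0$ gives $\|\mathbf{f}\|_{L^\infty_tH^m_x}\lesssim \|\mathbf{f}\|_{C^m(V)}$.

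The first step is to symmetrize the system. Introduce the sound-speed variable $w=\frac{2}{\gamma-1}c$ with $c^2=A\gamma\rho^{\gamma-1}$. Since $\rho>0$, equations \eqref{eulereq1}--\eqref{eulereq2} become
\[
\partial_t\mathbf{v}+\mathbf{v}\cdot\nabla\mathbf{v}+\tfrac{\gamma-1}{2}w\nabla w=\mathbf{f}/\rho,\qquad \partial_t w+\mathbf{v}\cdot\nabla w+\tfrac{\gamma-1}{2}w\nabla\cdot\mathbf{v}=0.
\]
This system is symmetric hyperbolic in $U=(w,\mathbf{v})$, with symmetric coefficient matrices $A^j(U)$. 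We then write $U=U_0+u$, where $U_0$ is the background. The perturbation $u$ satisfies
\[
\partial_t u+\sum_j A^j(U_0+u)\partial_j u+B(U_0,u)=F,\qquad u|_{t=0}=0.
\]
Here $B(U_0,u)=\sum_j\bigl(A^j(U_0+u)-A^j(U_0)\bigr)\partial_jU_0$ is smooth in $u$, vanishes at $u=0$, and has coefficients bounded together with all their derivatives. The source is $F=(0,\mathbf{f}/\rho)$, which is compactly supported in $V$.

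Next comes the energy estimate. Since $m\geq 5/2>3/2+1$, the space $H^m(\mathbb{R}^3)$ embeds into $C^1$, so the commutator (Kato--Ponce/Moser) estimates apply. Taking $\partial^\alpha$ with $|\alpha|\le m$, pairing with $\partial^\alpha u$, and using the symmetry of $A^j$ gives
\[
\frac{d}{dt}\|u\|_{H^m}^2\le C\bigl(\|u\|_{C^1}+\|U_0\|_{C^{m+1}}\bigr)\bigl(\|u\|_{H^m}^2+\|u\|_{H^m}\bigr)+C\|F\|_{H^m}\|u\|_{H^m}.
\]
The constant depends only on a bound for $\|u\|_{L^\infty}$ that keeps $w$ positive. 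The terms linear in $u$ come from $B$. Existence follows by the standard iteration: solve linear symmetric hyperbolic problems with coefficients frozen at the previous iterate, obtain uniform bounds in $L^\infty H^m$, and get contraction in $L^\infty L^2$. Uniqueness follows from the $L^2$ energy estimate for the difference of two solutions. The time regularity $W^{1,\infty}([0,T];H^{m-1})$ is then read off from the equation.

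The main obstacle is reaching the whole interval $[0,T]$, and not only a short time. For this we use a bootstrap argument. The estimate has the form $y'\le C(1+y)y+C\|F\|$ for $y=\|u\|_{H^m}$ and $y(0)=0$, as long as $y\le 1$. Grönwall's inequality gives $y(t)\le C_Te^{C_T t}\sup_t\|F\|_{H^m}$. If $\|\mathbf{f}\|_{C^m(V)}<\varepsilon$ with $\varepsilon$ small depending on $T$ and $U_0$, the bootstrap assumption is never saturated. Then $\|u\|_{C^1}$ stays small, so $w$ stays positive, and in particular $\rho>0$. The standard continuation criterion then extends the local solution up to time $T$. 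Finally we convert back to $(\rho,\mathbf{v})$ through the smooth map $w\mapsto\rho$, which is legitimate because $w$ is bounded away from zero. This preserves the $H^m$ regularity of the perturbation.
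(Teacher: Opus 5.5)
Your proposal is correct and follows essentially the same route as the paper: perturb about $(\rho_0,\mathbf{v}_0)$, solve linear hyperbolic problems with coefficients frozen at the previous iterate, get an $H^m$ bound with exponential-in-$T$ growth plus contraction in $L^\infty L^2$, and use smallness of $\mathbf{f}$ to reach all of $[0,T]$. The only differences are that you symmetrize explicitly via the sound-speed variable, where the paper works in $(\delta\rho,\delta\mathbf{v})$ and cites Tice, and that you close by bootstrap and continuation rather than a single contraction on the ball $Z(\varepsilon_0,T)$.

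Two small fixes are needed. First, the first term of your displayed energy inequality should be $C(\|u\|_{C^1}+\|U_0\|_{C^{m+1}})\|u\|_{H^m}^2$. The reason is that $B=\sum_jA^j(u)\partial_jU_0$ is linear in $u$, and an additive $O(1)\cdot\|u\|_{H^m}$ there would destroy the smallness; your later ODE $y'\le C(1+y)y+C\|F\|$ is the right one. Second, $5/2=3/2+1$, so $H^m\hookrightarrow C^1$ needs $m>5/2$. The same endpoint caveat applies to the statement's $m\ge 5/2$ and to the paper's argument.
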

\begin{proof}
    We write
    \[
    \mathbf{v}=\mathbf{v}_0+\delta\mathbf{v},\quad \rho=\rho_0+\delta\rho.
    \]
    Then $(\delta\rho,\delta\mathbf{v})$ satisfies the equation
    \begin{eqnarray}
    \frac{\partial\delta\mathbf{v}}{\partial t}+(\mathbf{v}_0+\delta\mathbf{v})\cdot\nabla\delta\mathbf{v}+\delta\mathbf{v}\cdot\nabla\mathbf{v}_0+A\gamma(\rho_0+\delta\rho)^{\gamma-2}\nabla(\rho_0+\delta\rho)-A\gamma\rho_0^{\gamma-2}\nabla\rho_0&=\frac{f}{\rho_0+\delta\rho},\label{linearized1}\\
    \frac{\partial\delta\rho}{\partial t}+\nabla\cdot(\rho_0\delta\mathbf{v}+\delta\rho\mathbf{v}_0+\delta\rho\delta\mathbf{v})&=0\label{linearized2},
    \end{eqnarray}
    with $(\delta\rho,\delta\mathbf{v})\vert_{t=0}=(0,0)$.

    Consider the nonlinear map $\mathscr{T}(\varrho,\mathbf{w})=(\delta\rho,\delta\mathbf{v})$ where $(\delta\rho,\delta\mathbf{v})$ solves
     \[
     \begin{split}
    &\frac{\partial\delta\mathbf{v}}{\partial t}+(\mathbf{v}_0+\mathbf{w})\cdot\nabla\delta\mathbf{v}+A\gamma(\rho_0+\varrho)^{\gamma-2}\nabla\delta\rho+\delta\mathbf{v}\cdot\nabla\mathbf{v}_0+A\gamma(\gamma-2)\delta\rho\rho_0^{\gamma-3}\nabla\rho_0\\
    =& \frac{f}{\rho_0+\varrho}-A\gamma((\rho_0+\varrho)^{\gamma-2}-\rho_0^{\gamma-2}-(\gamma-2)\rho_0^{\gamma-3}\varrho)\nabla\rho_0,\\
    &\frac{\partial\delta\rho}{\partial t}+(\mathbf{v}_0+\mathbf{w})\cdot\nabla\delta\rho+(\rho_0+\varrho)\nabla\cdot\delta\mathbf{v}+\delta\mathbf{v}\cdot\nabla\rho_0+\delta\rho\nabla\cdot\mathbf{v}_0=0,
    \end{split}
  \]
  with $(\delta\rho,\delta\mathbf{v})\vert_{t=0}=(0,0)$. Notice that $\mathscr{T}(\delta\rho,\delta\mathbf{v})=(\delta\rho,\delta\mathbf{v})$ if and only if $(\delta\rho,\delta\mathbf{v})$ is a solution to \eqref{linearized1} and \eqref{linearized2}.

By \cite[Theorem 1.2.1]{ticequasilinear}, there exists a unique solution $(\delta\rho,\delta\mathbf{v})\in \bigcap_{k=0}^1 W^{k,\infty}([0,T];H^{m-k}(\mathbb{R}^3))$ to the above equations with estimate
\[
\|(\delta\rho,\delta\mathbf{v})\|_Z\leq C(\|f\|_{C^m}+\|\varrho\|_Z^2)e^{KT}
\]
with some constants $C,K$.
So if $\|\mathbf{f}\|_{C^m}$ is sufficiently small, then $\mathscr{T}$ is map from $Z(\varepsilon_0,T)$ to itself where $Z(\varepsilon_0,T)$ is the set of all function $u$ satisfying
  \[
  u\in\bigcap_{k=0}^1 W^{k,\infty}([0,T];H^{m-k}(\mathbb{R}^3)),\quad \|u\|^2_Z:=\sup_{t\in[0,T]}\sum_{k=0}^1\|\partial^k_tu(t)\|^2_{H^{m-k}}\leq \varepsilon_0
  \]
  with $\varepsilon_0>0$ sufficiently small.

  Now let $(\varrho_j,\mathbf{w}_j)\in Z(\epsilon_0,T)$, $j=1,2$, and denote $(\delta\rho_j,\delta\mathbf{v}_j)=\mathscr{T}(\varrho_j,\mathbf{w}_j)$. Then $(\delta\rho_1-\delta\rho_2,\delta\mathbf{v}_1-\delta\mathbf{v}_2)$ solves the equations
  \[
  \begin{split}
  &\frac{\partial(\delta\mathbf{v}_1-\delta\mathbf{v}_2)}{\partial t}+(\mathbf{v}_0+\mathbf{w}_1)\cdot\nabla(\delta\mathbf{v}_1-\delta\mathbf{v}_2)+A\gamma(\rho_0+\varrho_1)^{\gamma-2}\nabla(\delta\rho_1-\delta\rho_2)\\
  &+(\delta\mathbf{v}_1-\delta\mathbf{v}_2)\cdot\nabla\mathbf{v}_0+A\gamma(\gamma-2)(\delta\rho_1-\delta\rho_2)\rho_0^{\gamma-3}\nabla\rho_0=\mathbf{F},\\
  &\frac{\partial(\delta\rho_1-\delta\rho_2)}{\partial t}+(\mathbf{v}_0+\mathbf{w}_1)\cdot\nabla(\delta\rho_1-\delta\rho_2)+(\rho_0+\varrho_1)\nabla\cdot(\delta\mathbf{v}_1-\delta\mathbf{v}_2)\\
  &+(\delta\mathbf{v}_1-\delta\mathbf{v}_2)\cdot\nabla\rho_0+(\delta\rho_1-\delta\rho_2)\nabla\cdot\mathbf{v}_0=H,
  \end{split}
  \]
  where
  \[
  \begin{split}
  \mathbf{F}=& -\frac{f(\varrho_1 - \varrho_2)}{(\rho_0+\varrho_1)(\rho_0+\varrho_2)} -A\gamma\left((\rho_0+\varrho_1)^{\gamma-2}-(\rho_0+\varrho_2)^{\gamma-2}-(\gamma-2)\rho_0^{\gamma-3}(\varrho_1-\varrho_2)\right)\nabla\rho_0\\
  &+(\mathbf{w}_1-\mathbf{w}_2)\cdot\nabla\delta\mathbf{v}_2+A\gamma\left((\rho_0+\varrho_2)^{\gamma-2}-(\rho_0+\varrho_1)^{\gamma-2}\right)\nabla\delta\rho_2,
  \end{split}
  \]
  \[
  H=(\mathbf{w}_2-\mathbf{w}_1)\cdot\nabla\delta\rho_2+(\varrho_2-\varrho_1)\nabla\cdot\delta\mathbf{v}_2.
  \]
  We see that for any $t\in [0,T]$
  \[
  \|(H(t),\mathbf{F}(t))\|_{L^2}\leq C\varepsilon_0(\|\varrho_1(t)-\varrho_2(t)\|_{L^2}+\|\mathbf{w}_1(t)-\mathbf{w}_2(t)\|_{L^2}).
  \]
  If we equip $Z(\varepsilon_0,T)$ with a metric $d$ defined as
  \[
  d(u_1,u_2)=\sup_{t\in[0,T]}\|u_1(t)-u_2(t)\|_{L^2},
  \]
  the space $Z(\varepsilon_0,T)$ is a complete metric space \cite[Theorem 2.2.2]{ticequasilinear}. 
  Then one can show that
  \[
  d(\mathscr{T}(\varrho_1,\mathbf{w}_1),\mathscr{T}(\varrho_2,\mathbf{w}_2))^2=d((\delta\rho_1,\delta\mathbf{v}_1),(\delta\rho_2,\delta\mathbf{v}_2))^2\leq C\varepsilon_0 Te^{KT}d((\varrho_1,\mathbf{w}_1),(\varrho_2,\mathbf{w}_2))^2.
  \]
  Therefore $\mathscr{T}$ is a contraction if $\varepsilon_0$ is sufficiently small. By the contraction mapping theorem we can conclude that the equations \eqref{eulereq1}-\eqref{eulereq2} has a unique solution in $Z(\varepsilon_0,T)$.
\end{proof}
\subsection{Main result and literature review}
The main result of the paper is summarized in the following theorem.
\begin{theorem}\label{thm:mainthm}
Assume $I_g(p^-,p^+)$ contains no cut points and $\nabla\times\mathbf{v}_0\neq 0$ in $I_g(p^-,p^+)$. Assume also that $(\rho_0,\mathbf{v}_0)\vert_V$ is already known. Then $L_V$ uniquely determines the set $I_g(p^-,p^+)$ and the background solution $(\rho_0,\mathbf{v}_0)$ in the set $I_g(p^-,p^+)$.
\end{theorem}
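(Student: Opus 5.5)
We will use the higher order linearization method of Kurylev--Lassas--Uhlmann, adapted to the Euler system. We take sources $\mathbf{f}=\sum_{j=1}^3\epsilon_j\mathbf{f}_j$ with $\mathbf{f}_j\in C_0^m(V)$ and small parameters $\epsilon_j$. Proposition \ref{localwellposedness} guarantees that $L_V\mathbf{f}$ is well defined and depends smoothly on $\epsilon=(\epsilon_1,\epsilon_2,\epsilon_3)$, so we may differentiate at $\epsilon=0$.

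\textbf{First order.} The derivative $\partial_{\epsilon_j}(\rho,\mathbf{v})|_{\epsilon=0}=(\rho^{(j)},\mathbf{v}^{(j)})$ solves the linearized system (the linear part of \eqref{linearized1}--\eqref{linearized2}) with source $\mathbf{f}_j/\rho_0$. We decompose this system microlocally into two parts. Eliminating $\mathbf{v}^{(j)}$ leaves a scalar wave-type operator for $\rho^{(j)}$ with principal symbol $g^{-1}(\xi,\xi)$, where $g$ is the metric \eqref{lorentzianmetric}; this is the acoustic mode. The second part is a transport mode along $\partial_t+\mathbf{v}_0\cdot\nabla$, which carries vorticity and entropy-like components. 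We choose $\mathbf{f}_j$ to be conormal (or Gaussian beam) sources supported near points $x_j\in\mu$ with prescribed covectors. The acoustic parts of $(\rho^{(j)},\mathbf{v}^{(j)})$ are then Lagrangian distributions propagating along null geodesics of $g$. The transport parts stay microlocally on the integral curves of $\partial_t+\mathbf{v}_0\cdot\nabla$ through $\mathrm{supp}\,\mathbf{f}_j$, so they remain inside a neighborhood of $\mu$ and do not reach $I_g(p^-,p^+)$ away from $\mu$.

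\textbf{Third order interaction.} The quadratic and cubic terms of the Euler system give equations for $\partial_{\epsilon_1}\partial_{\epsilon_2}\partial_{\epsilon_3}(\rho,\mathbf{v})|_{\epsilon=0}$; second order would also suffice, since the nonlinearity is quadratic. We let three acoustic waves meet at a point $q\in I_g(p^-,p^+)$. We take the waves to be distorted plane waves, and a source at $q$ is produced by their interaction. Following the standard singular-support analysis (no cut points in $I_g(p^-,p^+)$, so the null geodesics from $q$ reach $\mu$ without conjugate or cut points), the new singularity appears in $L_V\mathbf{f}$ on $V$ exactly when $q\in I_g(p^-,p^+)$ and the principal symbol of the interaction term is nonzero. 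From the set of such detected singularities we reconstruct the earliest light observation sets $\mathcal{E}_V(q)$. By the Kurylev--Lassas--Uhlmann theorem, these sets determine $I_g(p^-,p^+)$ together with the conformal class of $g$ there.

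\textbf{Main obstacle.} The hardest step is showing that the leading symbol of the interaction is nonvanishing. For potential flows the acoustic–acoustic interaction in Euler can degenerate; for example, the Riemann-invariant structure produces cancellations. This is where $\nabla\times\mathbf{v}_0\neq 0$ enters. We will compute the principal symbol of the three-wave interaction at $q$. It is a polynomial in the covectors $\xi_j$ and in the amplitudes, with coefficients involving $\gamma$, $c_0$ and first derivatives of $\mathbf{v}_0$, including the vorticity through commutators of the transport and acoustic parts. We will then show it is not identically zero when the vorticity is nonzero, by choosing the $\xi_j$ in generic directions relative to $\nabla\times\mathbf{v}_0(q)$. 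If a pure symbol computation is insufficient, we will use the subprincipal term that is generated by the vorticity in the polarization of the waves.

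\textbf{Recovering the background.} With the conformal class known, we recover the conformal factor and hence $c_0$ and $\mathbf{v}_0$. In \eqref{lorentzianmetric} we have $g^{-1}(dt,dt)=-1$, so the representative of the conformal class normalized by $g^{-1}(dt,dt)=-1$ is exactly $g$. The inverse metric then gives $\mathbf{v}_0=-g^{-1}(dt,dx)$ and $c_0^2I-\mathbf{v}_0\mathbf{v}_0^T$. Since the time function $t$ is known from the coordinates on $V$ and propagates through the reconstruction, this yields $\mathbf{v}_0$ and $c_0$, and therefore $\rho_0=(c_0^2/A)^{1/(\gamma-1)}$ on $I_g(p^-,p^+)$.
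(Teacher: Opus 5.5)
There is a genuine gap in your final step. Your first stages match the paper: distorted plane waves, conical singularities from three-wave interactions, earliest light observation sets, then \cite{kurylev2018inverse}. That reconstruction, however, gives $I_g(p^-,p^+)$ and the conformal class only up to a diffeomorphism. From $L_V^{(1)}=L_V^{(2)}$ you obtain $g^{(1)}=e^{\lambda}\Phi^*g^{(2)}$ with $\Phi=\mathrm{Id}$ and $\lambda=0$ on $V$, and nothing more.

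Your normalization $g^{-1}(dt,dt)=-1$ and the formulas $\mathbf{v}_0=-g^{-1}(dt,dx)$, $c_0^2I-\mathbf{v}_0\mathbf{v}_0^T$ presuppose that $t$ and $x$ are known on the reconstructed manifold, i.e.\ that $t\circ\Phi=t$ and $x\circ\Phi=x$. The claim that $t$ ``propagates through the reconstruction'' is exactly what needs proof. The coordinate $t$ is only known on $V$, and the conformal data alone give no reason for $\Phi$ to preserve time slices. Once $t\circ\Phi=t$ is known the rest is easy: $g^{-1}(dt,dt)=-1$ forces $\lambda=0$. On each slice $\Phi$ is then a conformal map of the Euclidean metric equal to the identity on the open set $V\cap\{t=c\}$, hence the identity by Liouville's theorem.

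A smaller error: your remark that second order would suffice is wrong. $N^*_qK_{ij}=\mathrm{span}(\zeta_i,\zeta_j)$ contains no null covectors other than multiples of $\zeta_i,\zeta_j$, so two acoustic waves produce no new acoustic singularity reaching $V$.

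You have also put the hypothesis $\nabla\times\mathbf{v}_0\neq0$ in the wrong step, and this is the missing idea. The three-wave acoustic symbol needs no vorticity: in the configuration of Proposition \ref{nonvanishing} its leading coefficient is $-2(\gamma-1)(\gamma+3)a_1a_2a_3$, nonzero for every $\gamma>1$. In the paper, vorticity is what pins down $\Phi$, and the transport mode you discard is the tool for doing so.

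Two acoustic waves meeting at $q$ with $\tau_i+\tau_j=0$ emit, through a subprincipal term, an advective singularity on $\Lambda_{ij}^\tau$. Its symbol is proportional to $\Pi_\xi[(\xi_i/|\xi_i|-\xi_j/|\xi_j|)\times(\nabla\times\mathbf{v}_0)]$ (Lemma \ref{lem: principal symbol of v_ij}), and it vanishes for curl-free flow by Kelvin's theorem. This advective wave interacts with two further acoustic waves at $q_1$. It emits a conical acoustic wave visible in $V$ exactly when $q_1$ lies on the integral curve of $\partial_t+\mathbf{v}_0\cdot\nabla$ through $q$. This yields $\Phi_*(\partial_t+\mathbf{v}_0^{(1)}\cdot\nabla)\propto\partial_t+\mathbf{v}_0^{(2)}\cdot\nabla$.

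For $\tilde g=c_0^2g$, the orthogonal complement of $\partial_t+\mathbf{v}_0\cdot\nabla$ is $\mathrm{span}\{\partial_{x^1},\partial_{x^2},\partial_{x^3}\}$, so $\Phi$ maps $t$-slices to $t$-slices. Since $\mu\subset V$ meets every slice of the diamond and $\Phi=\mathrm{Id}$ there, $t\circ\Phi=t$. Liouville's theorem on each slice then gives $\Phi=\mathrm{Id}$ and a trivial conformal factor. Without an argument of this kind, your proof recovers $(\rho_0,\mathbf{v}_0)$ only modulo an unknown conformal diffeomorphism.
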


The Euler's equations we study here is a quasilinear hyperbolic system. In particular the background solution $(\rho_0,\mathbf{v}_0)$ induces a Lorentzian metric, which we aim to recover. The background solution $(\rho_0,\mathbf{v}_0)$ actually serves as some coefficients in the equation for the perturbed flow. To recover time-dependent coefficients in linear wave equations, in general the \textit{Boundary Control Method} \cite{belishev1987approach,belishev1992reconstruction} does not work. But certain cases under strong geometrical assumptions can be dealt with \cite{alexakis2022lorentzian,alexakis2024lorentzian}. For the determination of time-independent metrics, we refer to to work \cite{belishev1992reconstruction,kachalov2001inverse,lassas2014inverse,kurylev2018connection,helin2018correlation} using the \textit{Boundary Control Method} or its variant, and \cite{stefanov2005stable} via reducing to geometric inverse problems.

Inverse problems for nonlinear hyperbolic equations have been studied extensively since the work \cite{kurylev2018inverse}. Actually the nonlinear interactions of waves produce rich singularities, which carry extra information for inverse problems. This fact enables many inverse problems for nonlinear equations to be settled when their linear counterparts remain open. For the recovery of nonlinear terms in semilinear wave equations, we refer to \cite{lassas2018inverse, hintz2022inverse, lassas2020uniqueness, lassas2021stability,sa2022recovery,uhlmann2022inverse,sa2024recovery,lassas2025coefficient,lassas2025gaussian}. For the reconstruction of the metric using nonlinearity, \cite{kurylev2018inverse} reduces the source-to-solution map to the earliest light observations sets from which a Lorentzian metric can be determined up to a conformal diffeomorphim. See also \cite{lassas2018inverse,wang2019inverse,hintz2022dirichlet,uhlmann2022inverse} for further results. The work \cite{feizmohammadi2021inverse} introduces the concept of three-to-one scattering relations and use it to recover the metric (via relating to earliest light observation sets). To recover a metric using observation at single point, we refer to \cite{tzou2023determining,nursultanov2025determining}. The works \cite{feizmohammadi2019recovery,chen2021detection,oksanen2025inverse} utilize the nonlinearity to recover the linear coefficients in semilinear waves equations. For results in physically more realistic models, we refer to \cite{kurylev2022inverse, uhlmann2020determination} for results on the Einstein's equations, \cite{chen2025retrieving,chen2021inverse,chen2025inverse} on equations arising from particle physics, \cite{de2018nonlinear,uhlmann2021inverse,uhlmann2024determination} on a nonlinear elastic wave equation, \cite{balehowsky2022inverse} on the relativistic Boltzmann equation, \cite{acosta2022nonlinear,uhlmann2023inverse,kaltenbacher2023simultaneous,eptaminitakis2024weakly,qiu2026inverse} for models arising from nonlinear ultrasound imaging. We also refer to \cite{hintz2024inverse,alexakis2024inverse} for studies on inverse scattering problems for nonlinear wave equations. Stability issues have been addressed in \cite{lassas2021stability,lassas2022inverse, chen2025stable}.

\subsection{Outline of the proof}
In general we will use small sources of the form $\mathbf{f}=\sum_{i=1}^k\epsilon_i\mathbf{f}_i$. Each $\mathbf{f}_i$ gives a distorted plane pressure wave $(\rho_i,\mathbf{v}_i)$, which is the solution to the associated linearized equation. Those distorted plane wave would interact with each other due to the nonlinear nature of the model and generate new waves. We will analyze the (higher-order) asymptotic behaviors of $(\rho,\mathbf{v})$, especially their singularities in $V$, as those $\epsilon_i$'s approach $0$. This is the general strategy adopted in most of the work cited above dealing with inverse problems for nonlinear equations.

We will first use the ``conical" waves (shock waves caused by supersonic moving point sources) generated by the nonlinear interaction of three pressure waves to extract the three-to-one scattering relations associated with the Lorentzian metric \eqref{lorentzianmetric}. Then we use the three-to-one scattering relations to obtain the earliest light observation sets, from which one can determine the metric up to a conformal diffeomorphism. The determination of the conformal class of a Lorentzian manifold from earliest light observation sets is established in \cite{kurylev2018inverse} and used in various subsequent works.

Then we will show that the nonlinear interaction of two pressure waves can generate an advective flow along the integral curve of $\partial_t+\mathbf{v}_0\cdot\nabla$ under the assumption that $\nabla\times\mathbf{v}_0\neq 0$. The advective flow can interact with two other pressure waves and generate again a ``conical" wave, by observing which we can determine the integral curve of $\partial_t+\mathbf{v}_0\cdot\nabla$ up to the same diffeomorphism.\footnote{See the \href{https://github.com/Kerwinyyc/Advective-flow/blob/22cfc90bc212450003efb0a2cdb3a450e9a33341/wave_animation.mp4}{\textit{supplementary video}} illustrating the advective flow interactions.}

Combining the above results, we can determine the background solution $(\rho_0,\mathbf{v}_0)$ completely.

\subsection{Organization of the paper}
In Section \ref{chmultilinearization}, we carry out a third-order linearization of the Euler's equations. In Section \ref{chmicrolocal}, we review some preliminaries from microlocal analysis needed in our analysis and use them to construct distorted plane waves which are solutions of the linearized Euler's equations. Nonlinear interactions of distorted plane waves are analyzed in Section \ref{nonlinearinteraction} and used in Section \ref{chconformal} to reconstruct the conformal structure of the Lorentzian metric. In Section \ref{chadvective}, we analyze the advective flow generated by nonlinear interaction of two distorted plane waves. In Section \ref{cahinterafaw}, we analyze the nonlinear interaction of an advective and two distorted plane waves. Finally, all ingredients are integrated to prove the main result of the paper in Section \ref{chfinal}.

\section{Multilinearization}\label{chmultilinearization}
To carry out asymptotic analysis with multiple small parameters, we use a higher order linearization (multilinearization) programme.  This has been used extensively in the study of inverse problems for nonlinear equations. See, for example, \cite{kurylev2018inverse,lassas2018inverse,hintz2022inverse}.
Let us first do a third-order linearization in detail in this section. Denote $\vec{\epsilon}=(\epsilon_1,\epsilon_2,\epsilon_3)$. Take the source of the form $\mathbf{f}=\epsilon_1\mathbf{f}_1+\epsilon_2\mathbf{f}_2+\epsilon_3\mathbf{f}_3$, and assume the asymptotics
 \begin{equation}\label{asympvrho}
\begin{split}
\mathbf{v}=\mathbf{v}_0+\sum_{i=1}^3\epsilon_i \mathbf{v}_i+\frac{1}{2}\sum_{i, j=1}^3\epsilon_i\epsilon_j \mathbf{v}_{ij}+\sum_{i,j,k=1}^3\frac{1}{6}\epsilon_i\epsilon_j\epsilon_k\mathbf{v}_{ijk}+o(|\vec{\epsilon}|^3),\\
\rho=\rho_0+\sum_{i=1}^3\epsilon_i \rho_i+\frac{1}{2}\sum_{i, j=1}^3\epsilon_i\epsilon_j \rho_{ij}+\sum_{i,j,k=1}^3\frac{1}{6}\epsilon_i\epsilon_j\epsilon_k\rho_{ijk}+o(|\vec{\epsilon}|^3).
\end{split}
\end{equation}
Note that
\begin{align*}
&(\mathbf{v}_0,\rho_0)=(\mathbf{v},\rho)\vert_{\epsilon_1=\epsilon_2=\epsilon_3=0},\\
&(\rho_i,\mathbf{v}_i)=\frac{\partial}{\partial\epsilon_i}(\mathbf{v},\rho)\vert_{\epsilon_1=\epsilon_2=\epsilon_3=0},\\
&(\mathbf{v}_{ij},\rho_{ij})=\frac{\partial^2}{\partial\epsilon_i\partial\epsilon_j}(\mathbf{v},\rho)\vert_{\epsilon_1=\epsilon_2=\epsilon_3=0},\\
&(\mathbf{v}_{ijk},\rho_{ijk})=\frac{\partial^3}{\partial\epsilon_i\partial\epsilon_j\partial\epsilon_k}(\mathbf{v},\rho)\vert_{\epsilon_1=\epsilon_2=\epsilon_3=0}.
\end{align*}
We will use the asymptotics
\[
\begin{split}
p=&A\rho^\gamma=A\rho_0^\gamma+A\gamma\rho_0^{\gamma-1}\sum_{i=1}^3\epsilon_i\rho_i+\frac{A\gamma(\gamma-1)}{2}\rho_0^{\gamma-2}\sum_{i,j=1}^3\epsilon_i\epsilon_j\rho_i\rho_j+\frac{A\gamma}{2}\rho_0^{\gamma-1}\sum_{i,j=1}^3\epsilon_i\epsilon_j\rho_{ij}\\
&+\frac{A\gamma}{6}\rho_0^{\gamma-1}\sum_{i,j,k=1}^3\epsilon_i\epsilon_j\epsilon_k\rho_{ijk}+\frac{A\gamma(\gamma-1)(\gamma-2)}{6}\rho_0^{\gamma-3}\sum_{i,j,k=1}^3\epsilon_i\epsilon_j\epsilon_k\rho_i\rho_j\rho_k\\
&+\frac{A\gamma(\gamma-1)}{6}\sum_{i,j,k=1}^3\rho_0^{\gamma-2}\epsilon_i\epsilon_j\epsilon_k(\rho_{ij}\rho_k+\rho_{ik}\rho_j+\rho_{jk}\rho_i)+o(|\vec{\epsilon}|^3).
\end{split}
\]

Substituting the asymptotics into the Euler's equations, and collecting terms of the same order in $\vec{\epsilon}$, we will get a sequence of equations for $(\rho_i,\mathbf{v}_i)$, $(\rho_{ij},\mathbf{v}_{ij})$ and $(\rho_{ijk},\mathbf{v}_{ijk})$.
The first order derivative $(\rho_i,\mathbf{v}_i)$ satisfy the equations
\[
\begin{split}
\rho_0\frac{\partial\mathbf{v}_i}{\partial t}+\rho_i\frac{\partial\mathbf{v}_0}{\partial t}
+\rho_0\mathbf{v}_0\cdot\nabla\mathbf{v}_i+\rho_i\mathbf{v}_0\cdot\nabla\mathbf{v}_0+\rho_0\mathbf{v}_i\cdot\nabla \mathbf{v}_0\\
+A\gamma\rho_0^{\gamma-1}\nabla\rho_i+A\gamma(\gamma-1)\rho_0^{\gamma-2}\nabla\rho_0\rho_i=\mathbf{f}_i,
\end{split}
\]
which, using \eqref{eqv0}, can be written as
\[
\begin{split}
\rho_0\frac{\partial\mathbf{v}_i}{\partial t}+\rho_0\mathbf{v}_0\cdot\nabla\mathbf{v}_i+\rho_0\mathbf{v}_i\cdot\nabla \mathbf{v}_0+A\gamma\rho_0^{\gamma-1}\nabla\rho_i+A\gamma(\gamma-2)\rho_0^{\gamma-2}\nabla\rho_0\rho_i=\mathbf{f}_i,
\end{split}
\]
and
\[
\frac{\partial\rho_i}{\partial t}+\rho_0\nabla\cdot\mathbf{v}_i+\mathbf{v}_i\cdot\nabla\rho_0+\rho_i\nabla\cdot\mathbf{v}_0+\mathbf{v}_0\cdot\nabla\rho_i=0.
\]

Collecting $\mathcal{O}(\epsilon^2)$ terms, we have, for $i\neq j$,
\[
\begin{split}
&\rho_i\frac{\partial\mathbf{v}_j}{\partial t}+\rho_j\frac{\partial \mathbf{v}_i}{\partial t}+\rho_{ij}\frac{\partial \mathbf{v}_0}{\partial t}+\rho_0\frac{\partial \mathbf{v}_{ij}}{\partial t}\\
+&\rho_0\mathbf{v}_i\cdot\nabla\mathbf{v}_j+\rho_0\mathbf{v}_j\cdot\nabla\mathbf{v}_i+\rho_i\mathbf{v}_j\cdot\nabla\mathbf{v}_0+\rho_i\mathbf{v}_0\cdot\nabla\mathbf{v}_j+\rho_j\mathbf{v}_i\cdot\nabla\mathbf{v}_0+\rho_j\mathbf{v}_0\cdot\nabla\mathbf{v}_i\\
+&\rho_{ij}\mathbf{v}_0\cdot\nabla\mathbf{v}_0+\rho_0\mathbf{v}_{ij}\cdot\nabla\mathbf{v}_0+\rho_0\mathbf{v}_0\cdot\nabla\mathbf{v}_{ij}\\
+&A\gamma(\gamma-1)\rho_0^{\gamma-2}\nabla\rho_0\rho_{ij}+A\gamma\rho_0^{\gamma-1}\nabla\rho_{ij}\\
+&A\gamma(\gamma-1)(\gamma-2)\rho_0^{\gamma-3}\nabla\rho_0\rho_i\rho_j+A\gamma(\gamma-1)\rho_0^{\gamma-2}(\nabla\rho_i\rho_j+\rho_i\nabla\rho_j)=0,
\end{split}
\]
which can be simplified to
\begin{equation}\label{momentumij}
\begin{split}
&\rho_0\frac{\partial\mathbf{v}_{ij}}{\partial t}+\rho_0\mathbf{v}_0\cdot\nabla\mathbf{v}_{ij}+\rho_0\mathbf{v}_{ij}\cdot\nabla \mathbf{v}_0+A\gamma\rho_0^{\gamma-1}\nabla\rho_{ij}+A\gamma(\gamma-2)\rho_0^{\gamma-2}\nabla\rho_0\rho_{ij}\\
=&-\rho_0\mathbf{v}_i\cdot\nabla\mathbf{v}_j-\rho_0\mathbf{v}_j\cdot\nabla\mathbf{v}_i-A\gamma(\gamma-2)\rho_0^{\gamma-2}(\nabla\rho_i\rho_j+\rho_i\nabla\rho_j)\\
&-A\gamma(\gamma-2)(\gamma-3)\rho_0^{\gamma-3}\nabla\rho_0\rho_i\rho_j,
\end{split}
\end{equation}
and
\begin{equation}\label{continuityij}
\begin{split}
&\frac{\partial\rho_{ij}}{\partial t}+\rho_0\nabla\cdot\mathbf{v}_{ij}+\mathbf{v}_{ij}\cdot\nabla\rho_0+\rho_{ij}\nabla\cdot\mathbf{v}_0+\mathbf{v}_0\cdot\nabla\rho_{ij}\\
=&-\rho_i\nabla\cdot\mathbf{v}_j-\mathbf{v}_j\cdot\nabla\rho_i-\rho_j\nabla\cdot\mathbf{v}_i-\mathbf{v}_i\cdot\nabla\rho_j.
\end{split}
\end{equation}

Collecting $\mathcal{O}(\epsilon^3)$ terms, we obtain
\begin{align*}
&\rho_0\frac{\partial\mathbf{v}_{123}}{\partial t}+\rho_0\mathbf{v}_0\cdot\nabla\mathbf{v}_{123}+\rho_0\mathbf{v}_{123}\cdot\nabla \mathbf{v}_0+A\gamma\rho_0^{\gamma-1}\nabla\rho_{123}+A\gamma(\gamma-2)\rho_0^{\gamma-2}\nabla\rho_0\rho_{123}\\
=&-\frac{1}{2}\rho_{\sigma(1)}\frac{\partial\mathbf{v}_{\sigma(2)\sigma(3)}}{\partial t}-\frac{1}{2}\rho_{\sigma(1)\sigma(2)}\frac{\partial\mathbf{v}_{\sigma(3)}}{\partial t}-\frac{1}{2}\rho_0\mathbf{v}_{\sigma(1)\sigma(2)}\cdot\nabla\mathbf{v}_{\sigma(3)}-\frac{1}{2}\rho_0\mathbf{v}_{\sigma(1)}\cdot\nabla\mathbf{v}_{\sigma(2)\sigma(3)}\\
&-\rho_{\sigma(1)}\mathbf{v}_{\sigma(2)}\cdot\nabla\mathbf{v}_{\sigma(3)}-\frac{1}{2}\rho_{\sigma(1)}\mathbf{v}_0\cdot\nabla\mathbf{v}_{\sigma(2)\sigma(3)}-\frac{1}{2}\rho_{\sigma(1)}\mathbf{v}_{\sigma(2)\sigma(3)}\cdot\nabla\mathbf{v}_0\\
&-\frac{1}{2}\rho_{\sigma(1)\sigma(2)}\mathbf{v}_{\sigma(3)}\cdot\nabla\mathbf{v}_0-\frac{1}{2}\rho_{\sigma(1)\sigma(2)}\mathbf{v}_0\cdot\nabla\mathbf{v}_{\sigma(3)}\\
&-\nabla\left(\frac{A}{2}\gamma(\gamma-1)\rho_0^{\gamma-2}\rho_{\sigma(1)}\rho_{\sigma(2)\sigma(3)}+A\gamma(\gamma-1)(\gamma-2)\rho_0^{\gamma-3}\rho_1\rho_2\rho_3\right)\\
=&-\frac{1}{2}\rho_0\mathbf{v}_{\sigma(1)\sigma(2)}\cdot\nabla\mathbf{v}_{\sigma(3)}-\frac{1}{2}\rho_0\mathbf{v}_{\sigma(1)}\cdot\nabla\mathbf{v}_{\sigma(2)\sigma(3)}-\rho_{\sigma(1)}\mathbf{v}_{\sigma(2)}\cdot\nabla\mathbf{v}_{\sigma(3)}\\
&+\frac{1}{2}\rho_{\sigma(1)}\left(A\gamma\rho_0^{\gamma-2}\nabla\rho_{\sigma(2)\sigma(3)}+\mathbf{v}_{\sigma(2)}\cdot\nabla\mathbf{v}_{\sigma(3)}+\mathbf{v}_{\sigma(3)}\cdot\nabla\mathbf{v}_{\sigma(2)}+A\gamma(\gamma-2)\rho_0^{\gamma-3}(\nabla\rho_{\sigma(2)}\rho_{\sigma(3)}+\rho_{\sigma(2)}\nabla\rho_{\sigma(3)})\right)\\
&+\frac{1}{2}\rho_{\sigma(1)\sigma(2)}A\gamma\rho_0^{\gamma-2}\nabla\rho_{\sigma(3)}\\
&-\frac{A}{2}\gamma(\gamma-1)\rho_0^{\gamma-2}(\nabla\rho_{\sigma(1)}\rho_{\sigma(2)\sigma(3)}+\rho_{\sigma(1)}\nabla\rho_{\sigma(2)\sigma(3)})\\
&-A\gamma(\gamma-1)(\gamma-2)\rho_0^{\gamma-3}(\nabla\rho_1\rho_2\rho_3+\rho_1\nabla\rho_2\rho_3+\rho_1\rho_2\nabla\rho_3)\\
&+\mathrm{l.o.t.}\\
=&-\frac{1}{2}\rho_0\mathbf{v}_{\sigma(1)\sigma(2)}\cdot\nabla\mathbf{v}_{\sigma(3)}-\frac{1}{2}\rho_0\mathbf{v}_{\sigma(1)}\cdot\nabla\mathbf{v}_{\sigma(2)\sigma(3)}\\
&-\frac{A}{2}\gamma(\gamma-2)\rho_0^{\gamma-2}(\nabla\rho_{\sigma(1)}\rho_{\sigma(2)\sigma(3)}+\rho_{\sigma(1)}\nabla \rho_{\sigma(2)\sigma(3)})\\
&-A\gamma(\gamma-2)(\gamma-3)\rho_0^{\gamma-3}(\nabla\rho_1\rho_2\rho_3+\rho_1\nabla\rho_2\rho_3+\rho_1\rho_2\nabla\rho_3)+\mathrm{l.o.t.}\\
=&:\mathbf{f}_{123}
\end{align*}
Here $\sigma$ runs over all permutations of $(1,2,3)$.
Also we have
\[
\partial_t\rho_{123}+\nabla\cdot\left(\rho_0\mathbf{v}_{123}+\rho_{123}\mathbf{v}_0+\frac{1}{2}\rho_{\sigma(1)}\mathbf{v}_{\sigma(2)\sigma(3)}+\frac{1}{2}\rho_{\sigma(1)\sigma(2)}\mathbf{v}_{\sigma(3)}\right)=0.
\]
which can be written as
\[
\begin{split}
&\partial_t\rho_{123}+\rho_0\nabla\cdot\mathbf{v}_{123}+\mathbf{v}_{123}\cdot\nabla\rho_0+\rho_{123}\nabla\cdot\mathbf{v}_0+\mathbf{v}_0\cdot\nabla\rho_{123}\\
=&-\frac{1}{2}\nabla\cdot\left(\rho_{\sigma(1)}\mathbf{v}_{\sigma(2)\sigma(3)}+\rho_{\sigma(1)\sigma(2)}\mathbf{v}_{\sigma(3)}\right)\\
=&:h_{123}.
\end{split}
\]
We will carefully analyze this third-order linearized solution $(\rho_{123},\mathbf{v}_{123})$.



\section{Microlocal analysis of linear waves}\label{chmicrolocal}
We will use solutions $(\rho_i,\mathbf{v}_i)$ with conormal singularities. The nonlinear interaction of such waves will generate new singularties. Let us first give a brief review of the techniques from microlocal analysis that are needed.
\subsection{Preliminaries}
Let $X$ be an $n$-dimensional manifold. A submanifold $\Lambda\subset T^*X\setminus 0$ is called a Lagrangian submanifold if $\dim \Lambda=n$ and the canonical $2$-form $\sum_{j=1}^n\mathrm{d}\xi_j\wedge\mathrm{d}x^j$ vanishes on $\Lambda$.
 Assume $\Lambda$ is a smooth conic Lagragian submanifold of $T^*X\setminus 0$.  If a non-degenerate phase function $\phi(x,\theta):U\times\mathbb{R}^N\rightarrow\mathbb{R}$ (homogeneous of degree $1$ in $\theta$) locally parametrizes $\Lambda$, i.e.,
\[
\{(x,\mathrm{d}_x\phi)\in T^*X\setminus 0:x\in U,\mathrm{d}_\theta\phi=0\}\subset\Lambda,
\]
We say that a distribution $u\in\mathcal{D}'(X)$ is a classical Lagrangian distribution of order $\mu$ and denote $u\in I^\mu(\Lambda)$ if $u$ can be represented by an oscillatory integral,
\[
u(x)=\int_{\mathbb{R}^N}e^{\mathrm{i}\phi(x,\theta)}a(x,\theta)\mathrm{d}\theta
\]
with $a\in S^{\mu+\frac{n}{4}-\frac{N}{2}}(U\times\mathbb{R}^N)$ is a classical symbol of order $\mu+\frac{n}{4}-\frac{N}{2}$. We refer the readers to \cite{grigis1994microlocal,hormander2009analysis} for more details. For a classical Lagrangian distribution $u\in \mathcal{I}^\mu(\Lambda)$ we can define its principal symbol $\sigma(u)(\zeta)$ at $\zeta\in\Lambda$
\[
\sigma(u)(\zeta)\in S^{\mu+\frac{n}{4}}(\Lambda,\Omega^{1/2}\otimes L)/S^{\mu+\frac{n}{4}-1}(\Lambda,\Omega^{1/2}\otimes L),
\]
where $L$ is the Maslov-Keller line bundle and $\Omega^{1/2}$ is the half-density on $\Lambda$ (cf. \cite[Theorem 11.10]{grigis1994microlocal}). 

If $Y\subset X$ is a submanifold, then the conormal bundle $N^*Y$ is a conic Lagrangian submanifold. We still denote $\mathcal{I}^\mu(Y)=\mathcal{I}^\mu(N^*Y)$. Assume that we can find local coordinates $x=(x',x'')$, where $x'=(x^1,\cdots, x^k)$, $x''=(x^{k+1},\cdots, x^n)$, such that $Y=\{x'=0\}$. Also write $\xi=(\xi',\xi'')$, then $N^*Y=\{x'=0,\xi''=0,\xi'\neq 0\}$. Then one can write
\[
u=\int_{\mathbb{R}^k}e^{\mathrm{i}x'\cdot\xi'}a(x,\xi')\mathrm{d}\xi'
\]
with $a\in S^{\mu+\frac{n}{4}-\frac{k}{2}}(\mathbb{R}^n\times\mathbb{R}^k)$. Ignoring the half-density $|\mathrm{d}x''|^{1/2}|\mathrm{d}\xi'|^{1/2}$ and trivializing the Maslov-Keller line bundle, the principal symbol of $u$ can be identified with a function $a_0(x,\xi')\in S^{\mu+\frac{n}{4}-\frac{k}{2}}(\mathbb{R}^n\times\mathbb{R}^k)$ such that $a_0$ is homogeneous of degree $\mu+\frac{n}{4}-\frac{k}{2}$ in $\xi'$ and $a-a_0\in S^{\mu+\frac{n}{4}-\frac{k}{2}-1}(\mathbb{R}^n\times\mathbb{R}^k)$.\\

Next we recall the concept of paired Lagrangian distributions \cite{melrose1979lagrangian,guillemin1981oscillatory,greenleaf1993recovering}.
First consider $X=\mathbb{R}^n$ with Euclidean coordinates $(x^1,x^2,\cdots,x^n)=(x',x'',x''')$ where $x'=(x^1,\cdots,x^k)$, $x''=(x^{k+1},\cdots,x^{n-d})$ and $x'''=(x^{n-d+1},\cdots,x^n)$. Let $\Lambda_1=N^*S_1, \Lambda_2=N^*S_2$, where
\[
S_1=\{x'=x''=0\},\quad S_2=\{x''=0\}.
\]
 We can then define $u\in \mathcal{I}^{p,\ell}(\Lambda_1,\Lambda_2)$ if
\[
u=\int e^{\mathrm{i}(x'\cdot\xi'+x''\cdot\xi'')}a(x,\xi',\xi'')\mathrm{d}\xi'\mathrm{d}\xi''+u_0,
\]
where $u_0\in\mathcal{I}^p(\Lambda_2)$ and $a\in S^{p-\frac{n}{4}+\frac{k}{2}+\frac{d}{2},\ell-\frac{k}{2}}(\mathbb{R}^n;\mathbb{R}^{n-k-d}_{\xi''};\mathbb{R}^k_{\xi'})$ (cf. \cite{de2015diffraction}). Here $a\in S^{M_1,M_2}(\mathbb{R}^n;\mathbb{R}^{n-k-d}_{\xi''};\mathbb{R}^k_{\xi'})$ means
\[
|\partial^\gamma_{x'''}\partial^\beta_{\xi'}\partial^\alpha_{\xi''}a(x,\xi',\xi'')|\leq C_{K,\alpha,\beta,\gamma}(1+|\xi'|+|\xi''|)^{M_1-|\alpha|}(1+|\xi'|)^{M_2-|\beta|}
\]
with some constant $C_{K,\alpha,\beta,\gamma}$ for all $x\in K$.

When $X$ is a manifold, assume $\Lambda_1$ and $\Lambda_2\subset T^*X\setminus 0$ are two cleanly intersecting Lagrangian submanifolds. Recall that $\Lambda_1$ and $\Lambda_2$ intersect cleanly if $\Sigma=\Lambda_1\cap \Lambda_2$ is a smooth manifold and its tangent space satisfies $T_\lambda\Sigma=T_\lambda\Lambda_1\cap T_\lambda\Lambda_2$ for all $\lambda \in\Sigma$. We define the class $\mathcal{I}^{p,\ell}(\Lambda_1,\Lambda_2)\subset\mathcal{D}'(X)$ to consist of distributions of the form
\[
u=u_1+u_2+\sum F_jv_j,
\]
where $u_1\in \mathcal{I}^{p+\ell}(\Lambda_1\setminus\Lambda_2)$, $u_2\in \mathcal{I}^{p}(\Lambda_2\setminus\Lambda_1)$, $v_i\in\mathcal{I}^{p,\ell}(\mathbb{R}^n;N^*S_1,N^*S_2)$ for some linear subspaces $S_1,S_2\subset\mathbb{R}^n$ with $S_1\subset S_2$ and $F_j$ is a zeroth order FIO associated with a canonical transformation $\chi_j:X\rightarrow T^*\mathbb{R}^n$ mapping (locally) $\Lambda_1$ into $N^*S_1$ and  $\Lambda_2$ into $N^*S_2$. We have that if $u\in \mathcal{I}^{p,\ell}(X,\Lambda_1,\Lambda_2)$ then $u\in\mathcal{I}^{p+\ell}(\Lambda_1\setminus\Lambda_2)$ and $u\in\mathcal{I}^{p}(\Lambda_2\setminus\Lambda_1)$ (see \cite{melrose1979lagrangian,greenleaf1993recovering} for more details).

\subsection{Analysis of the linearized equation}
We consider the linear equation\label{sec:linearizedeq}
\begin{equation}\label{lineareq1}
\rho_0\frac{\partial\mathbf{v}}{\partial t}+\rho_0\mathbf{v}_0\cdot\nabla\mathbf{v}+\rho_0\mathbf{v}\cdot\nabla \mathbf{v}_0+A\gamma\rho_0^{\gamma-1}\nabla\rho+A\gamma(\gamma-2)\rho_0^{\gamma-2}\nabla\rho_0\rho=\mathbf{f},
\end{equation}
and
\begin{equation}\label{lineareq2}
\frac{\partial\rho}{\partial t}+\rho_0\nabla\cdot\mathbf{v}+\mathbf{v}\cdot\nabla\rho_0+\rho\nabla\cdot\mathbf{v}_0+\mathbf{v}_0\cdot\nabla\rho=h.
\end{equation}
Here $(\rho_0,\mathbf{v}_0)$ are known. The equation \eqref{lineareq1} can be rewritten as
\[
\frac{\partial\mathbf{v}}{\partial t}+\mathbf{v}_0\cdot\nabla\mathbf{v}+\mathbf{v}\cdot\nabla\mathbf{v}_0+\frac{c_0^2}{\rho_0}\nabla\rho+\nabla\left(\frac{c_0^2}{\rho_0}\right)\rho=\frac{\mathbf{f}}{\rho_0}.
\]
We denote the Fourier dual variable of $z=(t,x)$ as $\zeta=(\tau',\xi)$ and $\tau=\tau'+\sum_{j=1}^3\mathbf{v}_0^j\xi_j$.
We will use the notation $D_t=\partial_t+\mathbf{v}_0\cdot\nabla$ throughout the paper and denote $\zeta=(\tau,\xi)$. We can write the above system of equations as
\begin{equation}\label{lineareq_nonh}
P\left(\begin{array}{c}
\rho\\
\mathbf{v}
\end{array}\right)=\left(\begin{array}{cc}
D_t&\rho_0\nabla\cdot\\
\frac{c_0^2}{\rho_0}\nabla &D_t
\end{array}\right)\left(\begin{array}{c}
\rho\\
\mathbf{v}
\end{array}\right)+\mathrm{l.o.t.}=\left(\begin{array}{c}
h\\
\frac{\mathbf{f}}{\rho_0}
\end{array}\right).
\end{equation}
The principal symbol of $\frac{P}{\mathrm{i}}$ as a pseudodifferential operator is
\[
\left(\begin{array}{cc}
\tau&\rho_0\xi^T\\
\frac{c_0^2}{\rho_0}\xi &\tau
\end{array}\right).
\]
 The above matrix has four eigenvalues (counted with multiplicity) $\tau-c_0|\xi|$, $\tau+c_0|\xi|$, $\tau$, $\tau$. For the associated eigenvectors, we notice that
\[
\left(\begin{array}{cc}
\tau&\rho_0\xi^T\\
\frac{c^2_0}{\rho_0}\xi &\tau
\end{array}\right)\left(\begin{array}{c}
\rho_0\\
c_0\hat{\xi}
\end{array}\right)=(\tau+c_0|\xi|)\left(\begin{array}{c}
\rho_0\\
c_0\hat{\xi}
\end{array}\right),
\]
\[
\left(\begin{array}{cc}
\tau&\rho_0\xi^T\\
\frac{c_0^2}{\rho_0}\xi &\tau
\end{array}\right)\left(\begin{array}{c}
\rho_0\\
-c_0\hat{\xi}
\end{array}\right)=(\tau-c_0|\xi|)\left(\begin{array}{c}
\rho_0\\
-c_0\hat{\xi}
\end{array}\right),
\]
and for any $\xi^\perp\in\mathbb{R}^3\setminus 0$ perpendicular to $\xi$
\[
\left(\begin{array}{cc}
\tau&\rho_0\xi^T\\
\frac{c_0^2}{\rho_0}\xi &\tau
\end{array}\right)\left(\begin{array}{c}
0\\
\hat{\xi^\perp}
\end{array}\right)=\tau\left(\begin{array}{c}
0\\
\hat{\xi^\perp}
\end{array}\right),
\]
Since $\tau\pm c_0|\xi|$ and $\tau$ are symbols homogeneous of degree $1$ and of real principal type, the wavefront set of $(\rho,\mathbf{v})$, excluding the wavefront set of $(h,\mathbf{f})$, is contained in the bicharacteristic sets of $H_{\tau\pm c_0|\xi|}$ and $H_\tau$ (cf. \cite{grigis1994microlocal}). Here $H_p$ is the Hamiltonian vector field of $p(z,\zeta)\in T^*M$, that is,
\[
H_p=\frac{\partial p}{\partial \xi_i}\partial_{x^i}-\frac{\partial p}{\partial x^i}\partial_{\xi_i}.
\]

It was shown in \cite{melrose1979lagrangian} that $P$ has a parametrix
\[
Q\in \mathcal{I}^{-\frac{1}{2},-\frac{1}{2}}(\Delta'_{T^*M},\Lambda_{\tau-c_0|\xi|})+\mathcal{I}^{-\frac{1}{2},-\frac{1}{2}}(\Delta'_{T^*M},\Lambda_{\tau+c_0|\xi|})+\mathcal{I}^{-\frac{1}{2},-\frac{1}{2}}(\Delta'_{T^*M},\Lambda_{\tau}).
\]
In above we considered the Schwartz kernel of $Q$ as a paired Lagrangian distribution.
Denote
\[
\Sigma_{p}=\{(t,x,\tau,\xi)\in T^*M:p(t,x,\tau,\xi)=0\}
\]
Here $\Delta'_{T^*M}=N^*(\{z,z\};z\in M)$ and $\Lambda_p\subset T^*M\times T^*M$ is the Lagrangian submanifold obtained by flowing out $\Delta'_{T^*M}\cap\Sigma_p$ under $H_p$, that is,
\[
\Lambda_p=\{(z,\zeta,z',-\zeta');(z,\zeta)\in T^*M\setminus0,p(z,\zeta)=0,(z',\zeta')\in\Theta_{z,\zeta}\},
\]
where $\Theta_{z,\zeta}$ is the bicharacteristic of $p$ containing $(z,\zeta)$.

We denote $L_p^*M=\{\zeta\in T^*_pM\setminus\{0\},g(z,\zeta)=-\tau^2+c_0^2|\xi|^2=0\}$ be the set of light-like vectors in the cotangent space $T^*_pM$. Also $L^{*,+}_pM\subset L^*_pM$ and $L_p^{*,-}M\subset L^*_pM$ denote the future and the past light-like covectors.\\

To construct the parametrix more explicitly, we first take $h,\mathbf{f}=0$ and construct FIO solutions of the form
\begin{equation}\label{FIOsolution}
\begin{split}
&\left(\begin{array}{c}
\rho\\
\mathbf{v}
\end{array}\right)(t,x)\\
=&\sum_{\pm}\int e^{\mathrm{i}\omega\phi_\pm(t,x,\tau,\xi)}\left(\begin{array}{c}
a_\pm\\
\mathbf{b}_\pm
\end{array}\right)(t,x,\tau,\xi)\mathrm{d}\tau\mathrm{d}\xi+\int e^{\mathrm{i}\omega\psi(t,x,\tau,\xi)}\left(\begin{array}{c}
c\\
\mathbf{d}
\end{array}\right)(t,x,\tau,\xi)\mathrm{d}\tau\mathrm{d}\xi,
\end{split}
\end{equation}
where the phase functions $\phi_\pm$ and $\psi$ satisfies
\[
D_t\phi_\pm \pm c_0|\nabla\phi_\pm|=0
\]
and
\[
D_t\psi=0.
\]
Also the amplitudes are of the asymptotic form
\[
(a_\pm,\mathbf{b}_\pm,c,\mathrm{d})\sim \sum_{k=0}^\infty\omega^{-k}(a^{(k)}_\pm,\mathbf{b}^{(k)}_\pm,c^{(k)},\mathbf{d}^{(k)}).
\]

For the ease of notations, we denote $\phi=\phi_\pm$, $a^{(k)}=a^{(k)}_\pm$, $\mathbf{b}^{(k)}=\mathbf{b}^{(k)}_\pm$. Then $\phi$ satisfies the Eikonal equation
\[
(D_t\phi)^2-c_0^2|\nabla\phi|^2=0.
\]
Substituting the ansatz \eqref{FIOsolution} into the equations \eqref{lineareq1} and \eqref{lineareq2} and collecting terms of different orders in $\omega$, we get the following equations
\[
a^{(0)}D_t\phi+\rho_0\mathbf{b}^{(0)}\cdot\nabla\phi=0,
\]
\[
\mathbf{b}^{(0)}D_t\phi+a^{(0)}\frac{c^2_0}{\rho_0}\nabla\phi=0,
\]
\begin{equation}\label{transporteqa0}
D_ta^{(0)}+\mathrm{i}a^{(1)}D_t\phi+\rho_0\nabla\cdot\mathbf{b}^{(0)}+\mathrm{i}\rho_0\mathbf{b}^{(1)}\cdot\nabla\phi+\mathbf{b}^{(0)}\cdot\nabla\rho_0+a^{(0)}\nabla\cdot\mathbf{v}_0=0,
\end{equation}
\begin{equation}\label{transporteqb0}
D_t\mathbf{b}^{(0)}+\mathrm{i}\mathbf{b}^{(1)}D_t\phi+\mathbf{b}^{(0)}\cdot\nabla\mathbf{v}_0+\frac{c^2_0}{\rho_0}\nabla a^{(0)}+\mathrm{i}a^{(1)}\frac{c^2_0}{\rho_0}\nabla\phi+\nabla\frac{c^2_0}{\rho_0}a^{(0)}=0.
\end{equation}
Notice that $a^{(0)}$ and $\mathbf{b}^{(0)}$ satisfy
\[
\left(\begin{array}{cc}
D_t\phi&\rho_0\nabla^T\phi\\
\frac{c^2_0}{\rho_0}\nabla\phi&D_t\phi
\end{array}\right)\left(\begin{array}{c}
a^{(0)}\\
\mathbf{b}^{(0)}
\end{array}\right)=0.
\]
The matrix in the above equation is singular since $\phi$ satisfies the Eikonal equation, and then $\left(\begin{array}{c}
a^{(0)}\\
\mathbf{b}^{(0)}
\end{array}\right)$ needs to be in its kernel. Therefore 
\[
\left(\begin{array}{c}
a^{(0)}\\
\mathbf{b}^{(0)}
\end{array}\right)=\left(\begin{array}{c}
-\alpha D_t\phi\\
\frac{ c_0^2}{\rho_0}\alpha\nabla\phi
\end{array}\right)
\]
with some scalar function $\alpha$.


Next we use the equations \eqref{transporteqa0} and \eqref{transporteqb0} to derive a transport equation for $\alpha$. Notice that 
\[
(\mathrm{i}a^{(1)}D_t\phi+\mathrm{i}\rho_0\mathbf{b}^{(1)}\cdot\nabla\phi)D_t\phi-\rho_0(\mathrm{i}\mathbf{b}^{(1)}D_t\phi+\mathrm{i}a^{(1)}\frac{c_0^2}{\rho_0}\nabla\phi)\cdot\nabla\phi=0.
\]
Then we obtain from \eqref{transporteqa0} and \eqref{transporteqb0} the following equation
\[
\begin{split}
&(D_ta^{(0)}+\rho_0\nabla\cdot\mathbf{b}^{(0)}+\mathbf{b}^{(0)}\cdot\nabla\rho_0+a^{(0)}\nabla\cdot\mathbf{v}_0)D_t\phi\\
&-\rho_0\left(D_t\mathbf{b}^{(0)}+\mathbf{b}^{(0)}\cdot\nabla\mathbf{v}_0+\frac{c^2_0}{\rho_0}\nabla a^{(0)}+\nabla\frac{c^2_0}{\rho_0}a^{(0)}\right)\cdot\nabla\phi=0
\end{split}
\]
Using the facts that $a^{(0)}=-\alpha D_t\phi$ and $\mathbf{b}^{(0)}=\frac{c_0^2}{\rho_0}\alpha\nabla\phi$, we can reduce the above equation to
\[
\begin{split}
D_t(\alpha D_t\phi)D_t\phi-\rho_0\nabla\cdot(\frac{c_0^2}{\rho_0}\alpha\nabla\phi)D_t\phi-\frac{c_0^2}{\rho_0}\alpha\nabla\phi\cdot\nabla\rho_0D_t\phi+\alpha(D_t\phi)^2\nabla\cdot\mathbf{v}_0\\
+\rho_0D_t(\frac{c_0^2}{\rho_0}\alpha\nabla\phi)\cdot\nabla\phi+c_0^2\alpha(\nabla\phi\cdot\nabla\mathbf{v}_0)\cdot\nabla\phi-\rho_0\nabla(\frac{c_0^2}{\rho_0}\alpha D_t\phi)\cdot\nabla\phi=0.
\end{split}
\]

The tangent vector field along the null geodesic is
\[
-\partial_t\phi\partial_t-\partial_t\phi\mathbf{v}_0\cdot\nabla-\mathbf{v}_0\cdot\nabla\phi\partial_t+c_0^2\nabla\phi\cdot\nabla-(\mathbf{v}_0\cdot\nabla\phi)\mathbf{v}_0\cdot\nabla=-D_t\phi D_t+c_0^2\nabla\phi\cdot\nabla.
\]
One can also verify that
\[
\begin{split}
&\square_g\phi\\
=&-c_0^n\partial_t(c_0^{-n}\partial_t\phi)-c_0^n\partial_t(c_0^{-n}v_{0j}\partial_j\phi)-c_0^n\partial_j(c_0^{-n}v_{0j}\partial_t\phi)+c_0^{n}\partial_j(c_0^{-n+2}\partial_j\phi)-c_0^{n}\partial_j(c_0^{-n}v_{0j}v_{0k}\partial_k\phi)\\
=&-D_t^2\phi+c_0^2\Delta\phi-(\nabla\cdot\mathbf{v}_0)D_t\phi+(2-n)c_0^{-1}\nabla c_0\cdot\nabla\phi+nc_0^{-1}D_tc_0D_t\phi.
\end{split}
\]
We can then end up with the following equation for $\alpha$,
\[
\begin{split}
-2(-D_t\phi D_t+c_0^2\nabla\phi\cdot\nabla)\alpha+(D_t^2\phi-c_0^2\Delta\phi)\alpha-4 c_0(\nabla c_0\cdot\nabla\phi)\alpha\\
+D_t\phi(\nabla\cdot\mathbf{v}_0)\alpha+2c_0^{-1}D_t\phi D_tc_0\alpha+ \rho_0^{-1}(-D_t\phi D_t\rho_0+c_0^2\nabla\phi\cdot\nabla\rho_0)\alpha=0,
\end{split}
\]
where we have used the fact
\[
D_t\nabla\phi\cdot\nabla\phi-\nabla D_t\phi\cdot\nabla\phi+(\nabla\phi\cdot\nabla\mathbf{v}_0)\cdot\nabla\phi=0.
\]
If we denote
\[
\partial_s=g^{jk}(\partial_j\phi)\partial_k=-D_t\phi D_t+c_0^2\nabla\phi\cdot\nabla,
\]
we can write the above equation as a transport equation for $\alpha$ along the null geodesic
\[
-2\partial_s\alpha-(n+2)c_0^{-1}\partial_sc_0\alpha+\rho_0^{-1}\partial_s\rho_0\alpha-\square_g\phi\alpha=0.
\]
So
\[
\alpha=C\rho_0^{\frac{1}{2}}c_0^{-\frac{n+2}{2}}e^{-\frac{1}{2}\int_0^s\square_g\phi(\gamma(\sigma))\mathrm{d}\sigma}.
\]
For our purpose, the only thing important about the form of $\alpha$ is that we can take $\alpha$ non-vanishing along the null-geodesic. Construction of $(c,\mathbf{d})$ can also be carried out in a similar way, and we note here that $c^{(0)}$ must be $0$.

If $(h,\mathbf{f})$ is a conormal distribution, then the equation \eqref{lineareq_nonh} has a microlocal solution $(\rho,\mathbf{v})$ whose principal symbol is
\begin{equation}
    \left(\begin{array}{c}
\sigma(\rho)\\
\sigma(\mathbf{v})
\end{array}\right)(\eta)=\sigma(Q)(\eta,\zeta)\left(\begin{array}{c}
\sigma(h)\\
\frac{\sigma(\mathbf{f})}{\rho_0}
\end{array}\right)(\zeta),
\end{equation}
 where either $\eta$ is on the null-bicharacteristics of $\tau\pm c_0|\xi|$ starting from $\zeta\in L^{*,+}$, or $\eta$ is on the bicharacteristics of $\tau$ starting from $\zeta=(0,\xi)$, and for both cases $\zeta\in\mathrm{WF}(h,\mathbf{f})$. Recall that $Q$ is the parametrix of $P$. Roughly speaking, one can decompose
 \[
 \left(\begin{array}{c}
\sigma(h)\\
\frac{\sigma(\mathbf{f})}{\rho_0}
\end{array}\right)(\zeta)
 \]
 into a linear combination of the eigenvectors of $\sigma(P)$,
 \[
  \left(\begin{array}{c}
\sigma(h)\\
\frac{\sigma(\mathbf{f})}{\rho_0}
\end{array}\right)(\zeta)=\alpha_1\left(\begin{array}{c}
\rho_0\\
c_0\hat{\xi}
\end{array}\right)+\alpha_2\left(\begin{array}{c}
\rho_0\\
-c_0\hat{\xi}
\end{array}\right)+\alpha_3\left(\begin{array}{c}
0\\
\hat{\xi^\perp}
\end{array}\right).
 \]
 Then
 \[
 \sigma\left(\frac{P}{\mathrm{i}}\right)^{-1}\left(\begin{array}{c}
\sigma(h)\\
\frac{\sigma(\mathbf{f})}{\rho_0}
\end{array}\right)(\zeta)=\alpha_1(\tau+c_0|\xi|)^{-1}\left(\begin{array}{c}
\rho_0\\
c_0\hat{\xi}
\end{array}\right)+\alpha_2(\tau-c_0|\xi|)^{-1}\left(\begin{array}{c}
\rho_0\\
-c_0\hat{\xi}
\end{array}\right)+\alpha_3\tau^{-1}\left(\begin{array}{c}
0\\
\hat{\xi^\perp}
\end{array}\right)
 \]
 for $\zeta$ not in the characteristic set of $\tau\pm c_0|\xi|$ or $\tau$.
The $\alpha_1,\alpha_2,\alpha_3$ would specify the initial values for the solutions of certain transport equations for the principal symbol of $(\rho,\mathbf{v})$ along different bicharacteristics associated with $H_{\tau+c_0|\xi|}$, $H_{\tau-c_0|\xi|}$ and $H_\tau$ respectively. See \cite{melrose1979lagrangian} for more details.

 \subsection{Construction of distorted plane waves}
Let us construct solutions of the linearized equations that represent distorted plane waves, whose singularities propagate along the bicharacteristics of $\tau\pm c_0|\xi|$.
Let $z_0=(t_0,x_0)\in U$, $\zeta_0=(\tau_0,\xi_0)\in L^{*,+}_{z_0}M$. Take a Riemannian metric $g^+$ on $M$ and let
\[
\mathcal{V}_{\zeta_0,s_0}=\{\eta\in T^*_{z_0}M:\|\eta-\zeta_0\|_{g^+}< s_0,\|\eta\|_{g^+}=\|\zeta_0\|_{g^+}\},
\]
and
\[
\mathcal{W}_{\zeta_0,s_0}=\mathcal{V}_{\zeta_0,s_0}\cap L^{+,*}_{z_0}M.
\]

Denote
\[
\Sigma(\zeta_0,s_0)=\{(z_0,r\eta)\in T^*M;\eta\in\mathcal{V}_{\zeta_0,s_0},r\in\mathbb{R}\setminus\{0\}\},
\]
\[
\Lambda(\zeta_0,s_0)=\{(\gamma_{\eta}(t),r\dot{\gamma}_{\eta}(t))^\flat)\in T^*M;\eta\in\mathcal{W}_{\zeta_0,s_0},t\in(0,+\infty),r\in\mathbb{R}\setminus\{0\}\},
\]
where $\gamma_{\eta}$ is the null-geodesic with $\eta(0)=\pi(\eta),\dot{\eta}(0)^\flat=\eta$.
Notice that $\Lambda(\zeta_0,s_0)$ is the Lagrangian submanifold that is the flowout from $\Sigma(\zeta_0,s_0)\cap\{(\tau,\xi):\tau\pm c_0|\xi|=0\}$ by the Hamiltonian vector field of $H_{\tau\pm c_0|\xi|}$. We will use sources $\mathbf{f}\in\mathcal{I}^{\mu+\frac{1}{2}}(\Sigma(\zeta_0,s_0))$, $h=0$, which can be constructed by microlocally cutting off and regularizing the Delta distribution $\delta_{z_0}$. See \cite{kurylev2018inverse} for more details. We choose $s_0$ sufficiently small such that $\Sigma(\zeta_0,s_0)\cap\Sigma_\tau=\emptyset$ and
\[
\sigma(\mathbf{f})(\zeta_0)=\sigma(\mathbf{f})(-\zeta_0)
\]
is non-vanishing. For this case, there is no singularities propagating along the bicharacteristic of $\tau$.
 Then the solution $(\rho,\mathbf{v})\vert_{M\setminus\{z_0\}}\in \mathcal{I}^{\mu}(\Lambda(\zeta_0,s_0))$ represents a distorted plane wave.

Assume $\zeta\in \mathrm{WF}(\mathbf{f})$. To construct the phase functions in \eqref{FIOsolution} we denote $(\tau',\xi)=(\partial_t\phi_\pm,\nabla\phi_\pm)$ and solve the Hamilton equations
\[
\frac{\partial t}{\partial s}=1,\quad\frac{\partial x}{\partial s}=\mathbf{v}_0\pm c_0\hat{\xi},\quad \frac{\partial \tau'}{\partial s}=-\frac{\partial\mathbf{v}_0}{\partial t}\cdot\xi \mp \partial_tc_0|\xi|,\quad \frac{\partial \xi}{\partial s}=-\partial_x(\mathbf{v}_0\cdot\xi)\mp|\xi|\partial_x c_0,
\]
with initial conditions
\[
t(0)=t_0,\quad x(0)=x_0, \quad (\tau'(0), \xi(0))=\mp \zeta.
\]
The above null bicharacteristics project to the same null geodesic $(t,x)$ with $\frac{\partial t}{\partial s}(0)=1$, $\frac{\partial x}{\partial s}(0)=\mathbf{v}_0-c_0\hat{\zeta}_x$.

So we can take phase functions such that
\[
(D_t,\nabla)\phi_+=-(D_t,\nabla)\phi_-.
\]
Consequently 
\[
\tau\sigma(\rho)(\tau,\xi)+\rho_0\xi\cdot\sigma(\mathbf{v})(\tau,\xi)=0,
\]
for any $\eta=(\tau,\xi)\in \Lambda(\zeta_0,s_0)$.

\section{Nonlinear interactions of distorted plane waves}\label{nonlinearinteraction}

In this section we will use three sources $\mathbf{f}_i\in \mathcal{I}^{\mu+1/2}(\Sigma(\zeta_i,s_0))$, $j=1,2,3$, where $z_i=\pi(\zeta_i)\in V$ and $\zeta_i\in L^{*,+}_{z_i}M$. We can take $\mu$ sufficiently negative such that $\mathbf{f}_i\in C^{5/2}_0(V)$. They generate solutions to the linearized equations $(\rho_j,\mathbf{v}_j)\vert_{M\setminus\{z_j\}}\in \mathcal{I}^{\mu}(\Lambda(\zeta_i,s_0))$. Note that the principal symbol can be written as
\[
\sigma(\rho_i)(\tau_i,\xi_i)=-\alpha_i\tau_i,\quad \sigma(\mathbf{v}_i)(\tau_i,\xi_i)=\alpha_i \frac{c_0^2}{\rho_0}\xi_i,
\]
where $\alpha_i$ is non-vanishing. We assume that $\gamma_{\zeta_i}\cap \gamma_{\zeta_j}=\{q\}\in I(p^-,p^+)$ for any $i\neq j$. We will analyze the nonlinear interaction of these three solutions. The third-order linearization has been done in Section \ref{chmultilinearization} and we will use the same notations there. The newly generated singularities can be used to detect the Lorentzian structure $g$.

 \subsection{Nonlinear interaction of two waves}\label{sec: two waves}
 For ease of notations, we denote $\Lambda_i=\Lambda(\zeta_i,s_0)$. Since there are no cut points, locally one can write $\Lambda_i=N^*K_i$, where $K_i$ is a codimension one submanifold of $M$. Denote $K_{ij}=K_i\cap K_j$ and $\Lambda_{ij}=N^*(K_i\cap K_j)$. By \cite[Lemma 1.1]{greenleaf1993recovering}, we have
\[
(\rho_i,\mathbf{v}_i)\otimes(\rho_j,\mathbf{v}_j)\in\mathcal{I}^{\mu,\mu+\frac{n}{4}}(\Lambda_{ij},\Lambda_i)+\mathcal{I}^{\mu,\mu+\frac{n}{4}}(\Lambda_{ij},\Lambda_j).
\]
For us, $n$ is always equal to $4$.
Moreover, for any $(q,\zeta_{ij})\in \Lambda_{ij}\setminus(\Lambda_i\cup\Lambda_j)$, we can write $\zeta_{ij}=\zeta_i+\zeta_j$ in a unique way such that $\zeta_i\in N_q^*K_i$, $\zeta_j\in N_q^*K_j$. The principal symbol of $(\rho_i,\mathbf{v}_i)\otimes(\rho_j,\mathbf{v}_j)$ on $\Lambda_{ij}\setminus(\Lambda_i\cup\Lambda_j)$ is then
\[
\sigma\left((\rho_i,\mathbf{v}_i)\otimes(\rho_j,\mathbf{v}_j)\right)(\zeta_{ij})=(2\pi)^{-1}\sigma(\rho_i,\mathbf{v}_i)(\zeta_i)\otimes\sigma(\rho_j,\mathbf{v}_j)(\zeta_j).
\]
It is known that $(\Lambda_{ij}\setminus(\Lambda_i\cup\Lambda_j))\cap\{\tau\pm c_0|\xi|\}=\emptyset$. Therefore
\[
\mathrm{WF}(\rho_{ij},\mathbf{v}_{ij})\subset \Lambda_{ij}\cup \Lambda_i\cup\Lambda_j\cup \Lambda^\tau_{ij},
\]
where $\Lambda^\tau_{ij}$ is the flowout of $\Lambda_{ij}$ under the Hamilton flow $H_\tau$.
Away from $\{\tau=0\}\cup\Lambda_i\cup\Lambda_j$,
\[
(\rho_{ij}, \mathbf{v}_{ij})\in \mathcal{I}^{2\mu+\frac{n}{4}}(\Lambda_{ij}).
\]
Denote $\zeta_{ij}=\zeta_i+\zeta_j:=(\tau_i,\xi_i)+(\tau_j,\xi_j)\notin \{\tau=0\}\cup\Lambda_i\cup\Lambda_j$, and note that $P$ is elliptic at such $\zeta_{ij}$.
The principal symbol of $(\rho_{ij},\mathbf{v}_{ij})$ satisfy the equation
\begin{equation}\label{eqrhov1}
\begin{split}
&(\tau_i+\tau_j)\sigma(\rho_{ij})(\zeta_{ij})+\rho_0(\xi_i+\xi_j)\cdot\sigma(\mathbf{v}_{ij})(\zeta_{ij})\\
=&-(\xi_i+\xi_j)\cdot\left(\sigma(\rho_i)\sigma(\mathbf{v}_j)+\sigma(\rho_j)\sigma(\mathbf{v}_i)\right),
\end{split}
\end{equation}
and
\begin{equation}\label{eqrhov2}
\begin{split}
&(\tau_i+\tau_i)\sigma(\mathbf{v}_{ij})(\zeta_{ij})+\frac{c_0^2}{\rho_0}(\xi_i+\xi_j)\sigma(\rho_{ij})(\zeta_{ij})\\
=&-\xi_j\cdot\sigma(\mathbf{v}_i)\sigma(\mathbf{v}_j)-\xi_{i}\cdot\sigma(\mathbf{v}_j)\sigma(\mathbf{v}_{i})-(\gamma-2)\frac{c_0^2}{\rho_0^2}(\xi_i+\xi_j)\sigma(\rho_i)\sigma(\rho_j).
\end{split}
\end{equation}

Using the following matrix operation
\[
\left(\begin{array}{cc}
\tau&-\rho_0\xi^T\\
-\frac{c^2_0}{\rho_0}\xi &\tau
\end{array}\right)\left(\begin{array}{cc}
\tau&\rho_0\xi^T\\
\frac{c^2_0}{\rho_0}\xi &\tau
\end{array}\right)=\left(\begin{array}{cc}
\tau^2-c_0^2|\xi|^2&0\\
0 &\tau^2-c_0^2\xi\xi^T
\end{array}\right),
\]
we obtain
\begin{equation}\label{symbolrholij}
\begin{split}
&(\tau^2-c_0^2|\xi|^2)\sigma(\rho_{ij})(\zeta_{ij})\\
=&-(\tau_i+\tau_j)\left(\sigma(\rho_i)\xi_j\cdot\sigma(\mathbf{v}_j)+\sigma(\mathbf{v}_j)\cdot\xi_i\sigma(\rho_i)+\sigma(\rho_j)\xi_i\cdot\sigma(\mathbf{v}_i)+\sigma(\mathbf{v}_i)\cdot\xi_j\sigma(\rho_j)\right)\\
&+\rho_0(\xi_i+\xi_j)\cdot\left(\sigma(\mathbf{v}_i)\cdot\xi_j\sigma(\mathbf{v}_j)+\sigma(\mathbf{v}_j)\cdot\xi_i\sigma(\mathbf{v}_i)+(\gamma-2)\frac{c_0^2}{\rho_0^2}(\xi_i+\xi_j)\sigma(\rho_i)\sigma(\rho_j)\right),
\end{split}
%
\end{equation}
and
\begin{equation}\label{symbolvlij}
\begin{split}
&(\tau^2-c_0^2\xi\xi^T)\sigma(\mathbf{v}_{ij})(\zeta_{ij})\\
=&\frac{c_0^2}{\rho_0}(\xi_i+\xi_j)\left(\sigma(\rho_i)\xi_j\cdot\sigma(\mathbf{v}_j)+\sigma(\mathbf{v}_j)\cdot\xi_i\sigma(\rho_i)+\sigma(\rho_j)\xi_i\cdot\sigma(\mathbf{v}_i)+\sigma(\mathbf{v}_i)\cdot\xi_j\sigma(\rho_j)\right)\\
&-(\tau_i+\tau_j)\left(\sigma(\mathbf{v}_i)\cdot\xi_j\sigma(\mathbf{v}_j)+\sigma(\mathbf{v}_j)\cdot\xi_i\sigma(\mathbf{v}_i)+(\gamma-2)\frac{c_0^2}{\rho_0^2}(\xi_i+\xi_j)\sigma(\rho_i)\sigma(\rho_j)\right).
\end{split}
\end{equation}

Since away from $\Lambda_i\cup \Lambda_j$, 
\[
(\rho_i,\mathbf{v}_i)\otimes(\rho_j,\mathbf{v}_j)\in\mathcal{I}^{2\mu+\frac{n}{4}}(\Lambda_{ij}),
\]
we have (cf. \cite[Proposition 2.1]{greenleaf1993recovering})
\[
(\rho_{ij}, \mathbf{v}_{ij})\in \mathcal{I}^{2\mu+\frac{n}{4}+\frac{1}{2},-\frac{1}{2}}(\Lambda_{ij},\Lambda^\tau_{ij}).
\]
Notice that $\Lambda^\tau_{ij}=N^*K^\tau_{ij}$ for some codimension $1$ submanifold $K^\tau_{ij}$.
 \subsection{Nonlinear interaction of three waves}
 First we assume $K_i,K_j$, $i\neq j$, intersect transversally at $K_{ij}$ which is a codimension $2$ submanifold, and for distinct $i,j,k$, $K_{ij},K_k$ intersect transversally at $K_{123}$ which is a codimension $3$ submanifold.  To analyze the propagation of singularities for $(\rho_{123},\mathbf{v}_{123})$, we first need to analyze the wavefront set of $(h_{123},\mathbf{f}_{123})$ at $N^*K_{123}\cap \{\tau\pm c_0|\xi|=0\}$. Take $q\in K_{123}$ and $\zeta\in N_q^*K_{123}$ with $|\zeta|_{g}=0$. Write $\zeta=\zeta_1+\zeta_1+\zeta_3$ with $\zeta_i\in N^*_qK_i$ in a unique way. Note that by our assumption, $\zeta_1,\zeta_2,\zeta_3$ are linearly independent.
 
 Away from $\Lambda_1\cup\Lambda_2\cup \Lambda_3\cup\Lambda_{12}\cup\Lambda_{13}\cup \Lambda_{23}\cup\Lambda^\tau_{12}\cup \Lambda^\tau_{13}\cup \Lambda^\tau_{23}\cup N^*(K_{12}^\tau\cap K_3)\cup N^*(K_{13}^\tau\cap K_2)\cup N^*(K_{23}^\tau\cap K_1)$, we have (cf. \cite[Lemma 3.6]{lassas2018inverse})
\[
(\mathbf{v}_{ij},\rho_{ij})\otimes (\mathbf{v}_k,\rho_k)\in \mathcal{I}^{3\mu+\frac{n}{4}-\frac{1}{2}}(\Lambda_{123}),
\]
and also
\[
(\rho_1,\mathbf{v}_1)\otimes (\rho_2,\mathbf{v}_2)\otimes (\rho_3,\mathbf{v}_3)\in \mathcal{I}^{3\mu+\frac{n}{4}-\frac{1}{2}}(\Lambda_{123}).
\]
Also by \cite[Lemma 4.1]{wang2019inverse}
\[
(D_t,\nabla)\left((\mathbf{v}_{ij},\rho_{ij})\otimes (\mathbf{v}_k,\rho_k)\right),\quad (D_t,\nabla)\left((\rho_1,\mathbf{v}_1)\otimes (\rho_2,\mathbf{v}_2)\otimes (\rho_3,\mathbf{v}_3)\right)\in \mathcal{I}^{3\mu+\frac{n}{4}+\frac{1}{2}}(\Lambda_{123}).
\]

Note that $K^\tau_{ij}\cap K_k$ has codimension $2$. For $\zeta_{ij}\in N_q^*K^\tau_{ij}$ and $\zeta_k\in N_q^*K_k$, $ \zeta_k+\alpha\zeta_{ij}$ could be light-like with $\alpha=0$ and possibly another nonzero $\alpha$. The flowout of $K^\tau_{ij}\cap K_k$ under the Hamilton flow $H_{\tau\pm c_0|\xi|}$ then has codimension $1$.

Denote
\[
\Lambda^{(3)}=N^*(K^\tau_{12}\cap K_3)\cup N^*(K^\tau_{13}\cap K_2)\cup N^*(K^\tau_{23}\cap K_1),
\]
\[
\Lambda^{(3),g}=\Lambda_{\tau\pm c_0|\xi|}(K^\tau_{12}\cap K_3)\cup \Lambda_{\tau\pm c_0|\xi|}(K^\tau_{13}\cap K_2)\cup \Lambda_{\tau\pm c_0|\xi|}(K^\tau_{23}\cap K_1).
\]
Then away from $ \bigcup_{i=1}^3\Lambda_i\cup\bigcup_{i\neq j}\Lambda_{ij}\cup\bigcup_{i\neq j}\Lambda_{ij}^\tau\cup \Lambda^{(3)}$,
\[
(h_{123},\mathbf{f}_{123})\in \mathcal{I}^{3\mu+\frac{n}{4}+\frac{1}{2}}(\Lambda_{123}),
\]
where $\Lambda_{123}=N^*K_{123}$, and therefore away from $ \bigcup_{i=1}^3\Lambda_i\cup\bigcup_{i\neq j}\Lambda_{ij}\cup\bigcup_{i\neq j}\Lambda_{ij}^\tau\cup \Lambda^{(3)}\cup \Lambda^{(3),g}$,
\[
(\rho_{123},\mathbf{v}_{123})\in \mathcal{I}^{3\mu+\frac{n}{4},-\frac{1}{2}}(\Lambda_{123},\Lambda_{123}^g)\cup \mathcal{I}^{3\mu+\frac{n}{4},-\frac{1}{2}}(\Lambda_{123},\Lambda_{123}^\tau).
\]
By possibly shrinking $V$, we can assume, for the rest of the paper, without loss of generality that $V$ is union of integral curves of $\partial_t+\mathbf{v}_0\cdot\nabla$, and then $V\cap \pi(\bigcup_{i\neq j}\Lambda_{ij}\cup\bigcup_{i\neq j}\Lambda_{ij}^\tau\cup\Lambda^{(3)}\cup\Lambda_{123}^\tau)=\emptyset$. We will carefully analyze the singularties of $(\rho_{123},\mathbf{v}_{123})$ in the set $V\setminus \pi(\Lambda^{\tau,(3)}\cup \cup_{i=1}^3\Lambda_i)$. The exceptional set $\pi(\Lambda^{(3),g}\cup \bigcup_{i=1}^3\Lambda_i)$ tends to a set whose Hausdorff dimension is at most $1$ as $s_0\rightarrow 0$.
 \subsection{Three-to-one scattering relation}
 We recall the concept of three-to-one scattering relation introduced in \cite{feizmohammadi2021inverse} and used in, for example, \cite{hintz2024inverse,nursultanov2025determining}.
 \begin{definition}
 A relation $\mathcal{R}\subset (L^{*,+}V)^4$ is a three-to-one scattering relation if the following two conditions hold:
 \begin{enumerate}
 \item if $(\eta_0,\eta_1,\eta_2,\eta_3)\in\mathcal{R}$ then
 \[
 \gamma_{\eta_0}(\mathbb{R}^-)\cap\bigcap_{j=1}^3\gamma_{\eta_j}(\mathbb{R}^+)\neq \emptyset.
 \]
 \item The set $\mathcal{R}$ constains all $\mathcal{R}\subset (L^{*,+}V)^4$ which satisty
  \begin{enumerate}
  \item The bicharacterstics through $\eta_j$, $j=0,1,2,3$, are distinct;
  \item There are $y\in M$ and $s_0<0$, $s_j>0$, $j=1,2,3$ such that $y=\gamma_{\eta_0}(s_0)$ and $y=\gamma_{\eta_j}(s_j)$;
  \item Writing $\zeta_j=\dot{\gamma}_{\eta_j}(s_j)^\flat$, it holds that $\zeta_0\in \mathrm{span}(\zeta_1,\zeta_2,\zeta_3)$.
   \end{enumerate}
 \end{enumerate}
 \end{definition}
 
 Define a conical piece of $L^{*,+}V$ associated to a three-to-one scattering relation $\mathcal{R}$ and $\eta_1,\eta_2,\eta_3\in L^{*,+}V$ by
 \[
\mathrm{CP}(\eta_1,\eta_2,\eta_3)=\{\eta_0\in L^{*,+}V; (\eta_0,\eta_1,\eta_2,\eta_3)\in\mathcal{R} \}.
 \]
 We always take $\eta_1,\eta_2,\eta_3$ such that the bicharacterstics through $\eta_j$, $j=0,1,2,3$, are distinct.
 

 \begin{proposition}\label{nonvanishing}
For any $\eta_0,\eta_1\in L^{*,+}V$ such that $\gamma_{\eta_0}(\mathbb{R}^-)\cap\gamma_{\eta_1}(\mathbb{R}^+)=\{q\}\notin V$, there exist $\eta_2,\eta_3\in L^{*,+}V$ such that $\eta_0\in\mathrm{CP}(\eta_1,\eta_2,\eta_3)$, and $\sigma(\rho_{123},\mathbf{v}_{123})(\eta)$ is generically nonvanishing for $\eta\in\mathrm{CP}(\eta_1,\eta_2,\eta_3)\cap U$, where $U\subset T^*M$ is a small neighborhood of $\eta_0$.
 \end{proposition}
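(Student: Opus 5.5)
Two things have to be done: construct $\eta_2,\eta_3$, and compute the principal symbol of $(\rho_{123},\mathbf{v}_{123})$ on the outgoing null flowout $\Lambda^g_{123}$.

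\textbf{Construction of $\eta_2,\eta_3$.} Let $q$ be the intersection point in the statement. Write $\zeta_0=\dot\gamma_{\eta_0}(s_0)^\flat\in L^{*,+}_qM$ and $\zeta_1=\dot\gamma_{\eta_1}(s_1)^\flat\in L^{*,+}_qM$. Choose two further future light-like covectors $\zeta_2,\zeta_3\in L^{*,+}_qM$ such that $\zeta_1,\zeta_2,\zeta_3$ are linearly independent and $\zeta_0=\sum_{j=1}^3 a_j\zeta_j$ with all $a_j\neq0$. This is possible because $L^{*,+}_qM$ spans $T^*_qM$ and the choice is open.

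Follow the null geodesics through $\zeta_2,\zeta_3$ backward. Since $q\in I_g(p^-,p^+)$, these geodesics reach $V$ near $\mu$ after a small perturbation of $\zeta_2,\zeta_3$. This defines $\eta_2,\eta_3\in L^{*,+}V$. We can keep the four bicharacteristics distinct and avoid conjugate points, using the no-cut-point assumption. By definition, $\eta_0\in \mathrm{CP}(\eta_1,\eta_2,\eta_3)$.

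Take sources $\mathbf{f}_j\in\mathcal{I}^{\mu+1/2}(\Sigma(\eta_j,s_0))$ with $s_0$ small. Then $K_1,K_2,K_3$ meet transversally at $K_{123}\ni q$, and $\zeta_0\in N^*_qK_{123}$ is light-like. Since $q\notin V$ and $\gamma_{\eta_0}$ leaves $q$ into $V$, the discussion in Section~\ref{nonlinearinteraction} applies: near $\eta_0$, away from the exceptional set $\pi(\Lambda^{(3),g}\cup\bigcup\Lambda_i)$, we have $(\rho_{123},\mathbf{v}_{123})\in\mathcal{I}(\Lambda^g_{123})$. Its principal symbol at $\eta$ is obtained by transporting, via $\sigma(Q)$, the symbol of $(h_{123},\mathbf{f}_{123}/\rho_0)$ at $\zeta=\zeta_1+\zeta_2+\zeta_3$ (after rescaling). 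The transport has a nonvanishing factor $\alpha$, as shown in the construction of the FIO solutions. So the symbol is nonzero if and only if the projection of $\sigma(h_{123},\mathbf{f}_{123}/\rho_0)(\zeta)$ onto the eigenvector $(\rho_0,\mp c_0\hat\xi)$ of the characteristic branch is nonzero. Equivalently, the scalar $\tau\,\sigma(h_{123})+\rho_0\xi\cdot\sigma(\mathbf{f}_{123})/\rho_0$ (up to signs) must be nonzero.

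\textbf{Computing that scalar.} Insert $\sigma(\rho_i)=-\alpha_i\tau_i$ and $\sigma(\mathbf{v}_i)=\alpha_i\frac{c_0^2}{\rho_0}\xi_i$. Insert $\sigma(\rho_{ij}),\sigma(\mathbf{v}_{ij})$ at $\zeta_i+\zeta_j$ from \eqref{symbolrholij}--\eqref{symbolvlij}; these are elliptic there, since $\zeta_i+\zeta_j$ is neither light-like nor in $\{\tau=0\}$ for generic choices. Then keep only the top-order terms of $h_{123}$ and $\mathbf{f}_{123}$, i.e.\ those with one derivative. The result is $\alpha_1\alpha_2\alpha_3$ times a rational function $G(\zeta_1,\zeta_2,\zeta_3)$ in the covector components, with coefficients depending on $\rho_0,c_0,\gamma$.

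\textbf{Main obstacle: showing $G\not\equiv0$.} It suffices to exhibit one admissible configuration with $G\neq0$. Work at the point $q$ in coordinates where $\mathbf{v}_0(q)=0$ and $c_0(q)=1$ (a Galilean shift, which only changes $\tau'$ to $\tau$). Choose the $\zeta_j$ as small perturbations of nearly collinear light-like covectors, or use an explicit symmetric triple, and expand. Typically the cross-terms $\frac{c_0^2}{\rho_0}\xi_i\cdot\xi_j$ and $\tau_i\tau_j$ give a leading term proportional to $(\gamma+1)$, which is nonzero since $\gamma>1$. If that expansion degenerates, I would instead use the scaling $\zeta_2\to\lambda\zeta_2$ and isolate the leading power of $\lambda$, where fewer terms survive.

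Once $G$ is nonzero at one point, it is real analytic on the connected set of admissible configurations. Hence it is nonzero on an open dense subset, which is the meaning of ``generically nonvanishing'' for $\eta$ in $\mathrm{CP}(\eta_1,\eta_2,\eta_3)\cap U$.
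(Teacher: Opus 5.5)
Your outline has the same skeleton as the paper's proof: reduce to a scalar projection of $\sigma(h_{123},\mathbf{f}_{123}/\rho_0)$ at $\zeta=\zeta_1+\zeta_2+\zeta_3$, show it is not identically zero, and conclude by analyticity. However, the step that constitutes the proposition is not carried out, namely exhibiting an admissible configuration where that scalar is nonzero. The statements ``typically the leading term is proportional to $(\gamma+1)$'' and the fallback rescaling are guesses, not an argument. (Also fix the scalar: on $\tau=-c_0|\xi|$ the left eigenvector is $(c_0,\rho_0\hat\xi)$, so the quantity is $\rho_0\xi\cdot\sigma(\mathbf{f}_{123}/\rho_0)-\tau\sigma(h_{123})$, with a definite relative minus sign.)

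More seriously, the freedom you rely on does not exist. Since $\eta_2,\eta_3$ must lie in $L^{*,+}V$, the null geodesics through $\zeta_2,\zeta_3$, followed backward from $q\notin V$, must enter the small neighbourhood $V$ of $\mu$. So $\zeta_1,\zeta_2,\zeta_3$ are confined to a small cone about the one direction pointing back to $\mu$, while $\zeta_0$ stays away from it. You cannot take a generic or ``explicit symmetric'' triple and reach $V$ by a small perturbation, which would anyway break the codimension-one condition $\zeta_0\in\mathrm{span}(\zeta_1,\zeta_2,\zeta_3)$. In this forced, nearly collinear regime with opening $\varsigma\to0$, the coefficients in $\theta_0=\sum_j\alpha_j\theta_j$ are of size $\varsigma^{-2}$, while $\zeta_1+\zeta_2+\zeta_3=\zeta$ stays bounded. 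As a result the naive leading orders cancel. For example, the $\varsigma^{-2}$ parts of the $\sigma(\rho_{ij})$, all carrying $(\gamma+1)$, pair with $\sigma(\rho_k)\propto\tau_k$ into a multiple of $\tau_1+\tau_2+\tau_3=\tau=\mathcal{O}(1)$. Nonvanishing is therefore decided only by a careful subleading expansion, and the coefficient that survives is not your $(\gamma+1)$.

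That expansion is exactly the paper's proof. It takes $\theta_1=(-c_0,1,0,0)$, $\theta_{2,3}=(-c_0,\sqrt{1-\varsigma^2},\pm\varsigma,0)$ and $\theta_0=(-c_0,\pm\sqrt{1-r_0^2},r_0,0)$, expands $\alpha_j$, $\sigma(\rho_{ij})$, $\sigma(\mathbf{v}_{ij})$ and the projection in $\varsigma$, and finds at $r_0=1$ the value $\bigl(-2(\gamma-1)(\gamma+3)+\mathcal{O}(\varsigma)\bigr)a_1a_2a_3\neq0$.

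Note also how the paper closes the argument, which your last paragraph glosses over. All covectors lie in $\{\xi_3=0\}$, so for these fixed $\eta_1,\eta_2,\eta_3$ the conic piece is the circle $\xi=(\cos\beta,\sin\beta,0)$, $\beta\in(\delta,2\pi-\delta)$. This circle contains both the test point $r_0=1$ and the given $\zeta_0$, so analyticity in the single variable $\beta$ yields nonvanishing near $\zeta_0$ on $\mathrm{CP}(\eta_1,\eta_2,\eta_3)$ itself. Analyticity over ``the connected set of admissible configurations'' does not by itself give this. The good point must lie on the conic piece of the specific triple you constructed, or be joined to it inside the constrained family.
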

 \begin{proof}
 Assume that $q=\gamma_{\eta_0}(s_0)=\gamma_{\eta_1}(s_1)$. Denote $\zeta_0=\dot{\gamma}_{\eta_0}(s_0)^\flat$, $\zeta_1=\dot{\gamma}_{\eta_1}(s_1)^\flat$.
Choosing proper local coordinates at the point $q$, we can assume without loss of generality that $\zeta_0=\beta_0\theta_0$, $\zeta_1=\beta_1\theta_1$ with
 \[
 \theta_0=(-c_0,\pm\sqrt{1-r_0^2},r_0,0),\quad  \theta_1=(-c_0,1,0,0).
 \]
 We then take
 \[
 \theta_2=(-c_0,\sqrt{1-\varsigma^2},\varsigma,0),\quad   \theta_3=(-c_0,\sqrt{1-\varsigma^2},-\varsigma,0),
 \]
 with $\varsigma$ sufficiently small.
 Notice that $ \theta_0$ can expressed as the unique linear combination of $ \theta_1, \theta_2, \theta_3$ as
 \[
 \theta_0=\alpha_1 \theta_1+\alpha_2 \theta_2+\alpha_3 \theta_3,
 \]
 where
\[
\alpha_1=\frac{-\sqrt{1-\varsigma^2}\pm\sqrt{1-r_0^2}}{1-\sqrt{1-\varsigma^2}},\quad \alpha_2=\frac{1\mp\sqrt{1-r_0^2}}{2(1-\sqrt{1-\varsigma^2})}+\frac{r_0}{2\varsigma},\quad \alpha_3=\frac{1\mp\sqrt{1-r_0^2}}{2(1-\sqrt{1-\varsigma^2})}-\frac{r_0}{2\varsigma}.
\]

Denote $b(r_0)=1\mp\sqrt{1-r_0^2}\neq 0$. 
We consider the small $\varsigma$ asymptotics
 \begin{align*}
 \alpha_1=&-2b(r_0)\varsigma^{-2}+1+\frac{b(r_0)}{2}+\mathcal{O}(\varsigma),\\
 \alpha_2=&b(r_0)\varsigma^{-2}+\frac{r_0}{2}\varsigma^{-1}-\frac{b(r_0)}{4}+\mathcal{O}(\varsigma),\\
 \alpha_3=&b(r_0)\varsigma^{-2}-\frac{r_0}{2}\varsigma^{-1}-\frac{b(r_0)}{4}+\mathcal{O}(\varsigma).
 \end{align*}
 Without loss of generality, we take $\zeta=\theta_0$, $\zeta_i=\alpha_i\theta_i$, $i=1,2,3$. Then $\zeta$ can be written to be the unique linear combination of $\zeta_1,\zeta_2,\zeta_3$
 \[
 \zeta=\zeta_1+\zeta_2+\zeta_3.
 \]
Writing $\zeta=(\tau,\xi)$ and $\zeta_i=(\tau_i,\xi_i)$, we have the asymptotics in small $\varsigma$

\[
  \begin{split}
\tau_1=&2c_0b(r_0)\varsigma^{-2}-c_0-\frac{c_0b(r_0)}{2}+\mathcal{O}(\varsigma),\\
\tau_2=&-c_0b(r_0)\varsigma^{-2}-\frac{c_0r_0}{2}\varsigma^{-1}+\frac{c_0b(r_0)}{4}+\mathcal{O}(\varsigma),\\
\tau_3=&-c_0b(r_0)\varsigma^{-2}+\frac{c_0r_0}{2}\varsigma^{-1}+\frac{c_0b(r_0)}{4}+\mathcal{O}(\varsigma),
 \end{split}
\]
 and
 \[
 \begin{split}
\xi_1&=-2b(r_0)\mathbf{e}_1\varsigma^{-2}+\mathbf{e}_1+\frac{b(r_0)}{2}\mathbf{e}_1+\mathcal{O}(\varsigma),\\
\xi_2&=b(r_0)\mathbf{e}_1\varsigma^{-2}+(\frac{r_0}{2}\mathbf{e}_1+b(r_0)\mathbf{e}_2)\varsigma^{-1}-\frac{3}{4}b(r_0)\mathbf{e}_1+\frac{r_0}{2}\mathbf{e}_2+\mathcal{O}(\varsigma),\\
\xi_3&=b(r_0)\mathbf{e}_1\varsigma^{-2}-(\frac{r_0}{2}\mathbf{e}_1+b(r_0)\mathbf{e}_2)\varsigma^{-1}-\frac{3}{4}b(r_0)\mathbf{e}_1+\frac{r_0}{2}\mathbf{e}_2+\mathcal{O}(\varsigma).
\end{split}
\]


Then we have the following asymptotic behaviors.
\begin{align*}
& \tau_1+\tau_2=c_0b(r_0)\varsigma^{-2}-\frac{c_0r_0}{2}\varsigma^{-1}-c_0-\frac{c_0b(r_0)}{4}+\mathcal{O}(\varsigma),\\
& \xi_1+\xi_2=-b(r_0)\mathbf{e}_1\varsigma^{-2}+(\frac{r_0}{2}\mathbf{e}_1+b(r_0)\mathbf{e}_2)\varsigma^{-1}+\mathbf{e}_1-\frac{b(r_0)}{4}\mathbf{e}_1+\frac{r_0}{2}\mathbf{e}_2+\mathcal{O}(\varsigma),\\
&\tau_1+\tau_3=c_0b(r_0)\varsigma^{-2}+\frac{c_0r_0}{2}\varsigma^{-1}-c_0-\frac{c_0b(r_0)}{4}+\mathcal{O}(\varsigma),\\
& \xi_1+\xi_3=-b(r_0)\mathbf{e}_1\varsigma^{-2}-(\frac{r_0}{2}\mathbf{e}_1+b(r_0)\mathbf{e}_2)\varsigma^{-1}+\mathbf{e}_1-\frac{b(r_0)}{4}\mathbf{e}_1+\frac{r_0}{2}\mathbf{e}_2+\mathcal{O}(\varsigma),\\
& \tau_2+\tau_3=-2c_0b(r_0)\varsigma^{-2}+\frac{c_0b(r_0)}{2}+\mathcal{O}(\varsigma),\\
& \xi_2+\xi_3=2b(r_0)\mathbf{e}_1\varsigma^{-2}-\frac{3b(r_0)}{2}\mathbf{e}_1+r_0\mathbf{e}_2+\mathcal{O}(\varsigma).
\end{align*}


 Note that we can take
 \[
 \sigma(\rho_i)(\zeta_i)=-\rho_0\tau_ia_i(\zeta_i)\varsigma^2,\quad  \sigma(\mathbf{v}_i)(\zeta_i)=c_0^2\xi_ia_i(\zeta_i)\varsigma^2.
 \]
%
%
%
Then the equation \eqref{symbolrholij} reduces to
 \begin{align*}
&((\tau_1+\tau_2)^2-c_0^2|\xi_1+\xi_2|^2)\sigma(\rho_{12})(\zeta_1+\zeta_2)\\
=&(\tau_1+\tau_2)^2\rho_0(\tau_1\tau_2+c_0^2\xi_1\cdot\xi_2)\alpha_1(\zeta_1)\alpha_2(\zeta_2)\varsigma^{4}+\rho_0|\xi_1+\xi_2|^2(c_0^2\xi_1\cdot\xi_2+(\gamma-2)\tau_1\tau_2)\alpha_1(\zeta_1)\alpha_2(\zeta_2)\varsigma^4.
\end{align*}

Using the following asymptotic behaviors
\[
\tau_1\tau_2=-2c_0^2b(r_0)^2\varsigma^{-4}-c_0^2r_0b(r_0)\varsigma^{-3}+c_0^2(b(r_0)+b(r_0)^2)\varsigma^{-2}+\mathcal{O}(\varsigma^{-1}),
\]
\[
\xi_1\cdot\xi_2=-2b(r_0)^2\varsigma^{-4}-r_0b(r_0)\varsigma^{-3}+(b(r_0)+2b(r_0)^2)\varsigma^{-2}+\mathcal{O}(\varsigma^{-1}),
\]
\[
(\tau_1+\tau_2)^2=c_0^2b(r_0)^2\varsigma^{-4}-c_0^2r_0b(r_0)\varsigma^{-3}+c_0^2(\frac{r_0^2}{4}-2b(r_0)-\frac{b(r_0)^2}{2})\varsigma^{-2}+\mathcal{O}(\varsigma^{-1}),
\]
\[
|\xi_1+\xi_2|^2=c_0^2b(r_0)^2\varsigma^{-4}-c_0^2r_0b(r_0)\varsigma^{-3}+c_0^2(\frac{r_0^2}{4}-2b(r_0)+\frac{3b(r_0)^2}{2})\varsigma^{-2}+\mathcal{O}(\varsigma^{-1}),
\]
we obtain
\begin{align*}
&((\tau_1+\tau_2)^2-c_0^2|\xi_1+\xi_2|^2)\sigma(\rho_{12})(\zeta_1+\zeta_2)\\
=&-2(\gamma+1)c_0^4\rho_0b(r_0)^4a_1(\zeta_1)a_2(\zeta_2)\varsigma^{-4}+(\gamma+1)c_0^4\rho_0r_0b(r_0)^3a_1(\zeta_1)a_2(\zeta_2)\varsigma^{-3}\\
&+\frac{\gamma+1}{2}c_0^4\rho_0r_0^2b(r_0)^2a_1(\zeta_1)a_2(\zeta_2)\varsigma^{-2}+(-4\gamma+6)c_0^4\rho_0b(r_0)^4a_1(\zeta_1)a_2(\zeta_2)\varsigma^{-2}\\
&+5(\gamma+1)c_0^4\rho_0b(r_0)^3a_1(\zeta_1)a_2(\zeta_2)\varsigma^{-2}+\mathcal{O}(\varsigma^{-1})a_1(\zeta_1)a_2(\zeta_2).
\end{align*}
Using the asymptotic behaviors
\begin{alignat*}{2}
(\tau_1+\tau_2)^2-c_0^2|\xi_1+\xi_2|^2=&c_0^2(\alpha_1+\alpha_2)^2-c_0^2(\alpha_1+\alpha_2\sqrt{1-\varsigma^2})^2-c_0^2\alpha_2^2\varsigma^2\\
=&c_0^2(2\alpha_1+\alpha_2+\alpha_2\sqrt{1-\varsigma^2})\alpha_2(1-\sqrt{1-\varsigma^2})-c_0^2\alpha_2^2\varsigma^2\\
=&-2c_0^2b(r_0)^2\varsigma^{-2}-c_0^2r_0b(r_0)\varsigma^{-1}+\frac{1}{2}c_0^2b(r_0)^2+c_0^2b(r_0)+\mathcal{O}(\varsigma),\\
\end{alignat*}
we end up with
\begin{align*}
\sigma(\rho_{12})(\zeta_1+\zeta_2)=&(\gamma+1)c_0^2\rho_0b(r_0)^2a_1(\zeta_1)a_2(\zeta_2)\varsigma^{-2}-(\gamma+1)c_0^2\rho_0r_0b(r_0)a_1(\zeta_1)a_2(\zeta_2)\varsigma^{-1}\\
&+\frac{\gamma+1}{4}c_0^2\rho_0r_0^2a_1(\zeta_1)a_2(\zeta_2)+\frac{9\gamma-11}{4}c_0^2\rho_0b(r_0)^2a_1(\zeta_1)a_2(\zeta_2)\\
&-2(\gamma+1)c_0^2\rho_0b(r_0)a_1(\zeta_1)a_2(\zeta_2)+\mathcal{O}(\varsigma)a_1(\zeta_1)a_2(\zeta_2).
 \end{align*}
 Similarly we have
\begin{align*}
\sigma(\rho_{13})(\zeta_1+\zeta_3)=&(\gamma+1)c_0^2\rho_0b(r_0)^2a_1(\zeta_1)a_3(\zeta_3)\varsigma^{-2}+(\gamma+1)c_0^2\rho_0r_0b(r_0)a_1(\zeta_1)a_3(\zeta_3)\varsigma^{-1}\\
&+\frac{\gamma+1}{4}c_0^2\rho_0r_0^2a_1(\zeta_1)a_3(\zeta_3)+\frac{9\gamma-11}{4}c_0^2\rho_0b(r_0)^2a_1(\zeta_1)a_3(\zeta_3)\\
&-2(\gamma+1)c_0^2\rho_0b(r_0)a_1(\zeta_1)a_3(\zeta_3)+\mathcal{O}(\varsigma)a_1(\zeta_1)a_3(\zeta_3).
 \end{align*}
 Note
 \[
 \frac{c_0^2(\xi_1+\xi_2)}{\tau_1+\tau_2}=-c_0\mathbf{e}_1-\frac{1}{2}c_0\mathbf{e}_1\varsigma^2+c_0\mathbf{e}_2\varsigma+\frac{c_0r_0}{b(r_0)}\mathbf{e}_2\varsigma^2+\mathcal{O}(\varsigma^3).
 \]
Then we use the equations \eqref{eqrhov2} to obtain

\begin{align*}
&(\tau_1+\tau_2)\sigma(\mathbf{v}_{12})(\zeta_1+\zeta_2)\\
=&-\frac{c_0^2}{\rho_0}\sigma(\rho_{12})(\xi_1+\xi_2)-c_0^2\varsigma^4(\xi_1\cdot\xi_2)(\xi_1+\xi_2)-\varsigma^4(\gamma-2)c_0^2\tau_1\tau_2(\xi_1+\xi_2).
 \end{align*}
 So
 \begin{align*}
&\sigma(\mathbf{v}_{12})(\zeta_1+\zeta_2)\\
=&(\gamma+1)c_0^3b(r_0)^2a_1a_2\mathbf{e}_1\varsigma^{-2}-(\gamma+1)c_0^3r_0b(r_0)a_1a_2\mathbf{e}_1\varsigma^{-1}-(\gamma+1)c_0^3b(r_0)^2a_1a_2\mathbf{e}_2\varsigma^{-1}\\
&+\frac{\gamma+1}{4}c_0^3r_0^2a_1a_2\mathbf{e}_1-2(\gamma+1)c_0^3b(r_0)a_1a_2\mathbf{e}_1+\frac{3\gamma-1}{4}c_0^3b(r_0)^2a_1a_2\mathbf{e}_1+\mathcal{O}(\varsigma)a_1a_2,
 \end{align*}
 and similarly
  \begin{align*}
&\sigma(\mathbf{v}_{13})(\zeta_1+\zeta_3)\\
=&(\gamma+1)c_0^3b(r_0)^2a_1a_3\mathbf{e}_1\varsigma^{-2}+(\gamma+1)c_0^3r_0b(r_0)a_1a_3\mathbf{e}_1\varsigma^{-1}+(\gamma+1)c_0^3b(r_0)^2a_1a_3\mathbf{e}_2\varsigma^{-1}\\
&+\frac{\gamma+1}{4}c_0^3r_0^2a_1a_3\mathbf{e}_1-2(\gamma+1)c_0^3b(r_0)a_1a_3\mathbf{e}_1+\frac{3\gamma-1}{4}c_0^3b(r_0)^2a_1a_3\mathbf{e}_1+\mathcal{O}(\varsigma)a_1a_3.
 \end{align*}


Next we calculate the principal symbol of $(\rho_{23},\mathbf{v}_{23})$. Note
\[
\tau_2\tau_3=c_0^2b(r_0)^2\varsigma^{-4}-\frac{c_0^2b(r_0)^2}{2}\varsigma^{-2}-\frac{c_0^2r_0^2}{4}\varsigma^{-2}+\mathcal{O}(\varsigma^{-1}),
\]
\[
\xi_2\cdot\xi_3=b(r_0)^2\varsigma^{-4}-\frac{5}{2}b(r_0)^2\varsigma^{-2}-\frac{r_0^2}{4}\varsigma^{-2}+\mathcal{O}(\varsigma^{-1}),
\]
\[
(\tau_2+\tau_3)^2=4c_0^2b(r_0)^2\varsigma^{-4}-2c_0^2b(r_0)^2\varsigma^{-2}+\mathcal{O}(\varsigma^{-1}),
\]
\[
|\xi_2+\xi_3|^2=4b(r_0)^2\varsigma^{-4}-6b(r_0)^2\varsigma^{-2}+\mathcal{O}(\varsigma^{-1}).
\]
By tedious calculation we obtain
\begin{align*}
&((\tau_2+\tau_3)^2-c_0^2|\xi_2+\xi_3|^2)\sigma(\rho_{23})(\zeta_2+\zeta_3)\\
=&4(\gamma+1)c_0^4\rho_0b(r_0)^4a_2a_3\varsigma^{-4}-(\gamma+1)c_0^4\rho_0r_0^2b(r_0)^2a_2a_3\varsigma^{-2}-4(\gamma+5)c_0^4\rho_0b(r_0)^4a_2a_3\varsigma^{-2}+\mathcal{O}(\varsigma^{-1})a_2a_3.
\end{align*}
Using the asymptotic behaviors
\begin{alignat*}{2}
(\tau_2+\tau_3)^2-c_0^2|\xi_2+\xi_3|^2&=4c_0^2b(r_0)^2\varsigma^{-2}-2c_0^2b(r_0)^2-c_0^2r_0^2+\mathcal{O}(\varsigma).
\end{alignat*}
So
\[
\sigma(\rho_{23})=(\gamma+1)c_0^2\rho_0b(r_0)^2a_2a_3\varsigma^{-2}-\frac{\gamma+9}{2}c_0^2\rho_0b(r_0)^2a_2a_3+\mathcal{O}(\varsigma)a_2a_3.
\]
Then we use the equations \eqref{eqrhov2} to obtain

\begin{align*}
&(\tau_2+\tau_3)\sigma(\mathbf{v}_{23})(\zeta_2+\zeta_3)\\
=&-\frac{c_0^2}{\rho_0}\sigma(\rho_{23})(\xi_2+\xi_3)-c_0^2\varsigma^4(\xi_2\cdot\xi_3)(\xi_2+\xi_3)-\varsigma^4(\gamma-2)c_0^2\tau_2\tau_3(\xi_2+\xi_3)\\
=&-2(\gamma+1)b(r_0)^3\mathbf{e}_1\varsigma^{-4}a_2a_3+\frac{\gamma+25}{2}b(r_0)^3\mathbf{e}_1\varsigma^{-2}a_2a_3-(\gamma+1)r_0b(r_0)^2\mathbf{e}_2\varsigma^{-2}a_2a_3+\mathcal{O}(\varsigma^{-1})a_2a_3.
 \end{align*}
Then
\[
\sigma(\mathbf{v}_{23})=(\gamma+1)c_0^3b(r_0)^2\mathbf{e}_1a_2a_3\varsigma^{-2}-6c_0^3b(r_0)^2a_2a_3\mathbf{e}_1+\frac{\gamma+1}{2}c_0^3r_0b(r_0)a_2a_3\mathbf{e}_2+\mathcal{O}(\varsigma)a_2a_3.
\]
~\\

Next we use the equations \eqref{eqrhov1}, \eqref{eqrhov2}
and also
\[
\tau_i\sigma(\rho_i)(\zeta_i)+\rho_0\xi_i\cdot\sigma(\mathbf{v}_i)(\zeta_i)=0,
\]
\[
\tau_i\sigma(\mathbf{v}_i)(\zeta_i)+\frac{c_0^2}{\rho_0}\xi_i\sigma(\rho_i)(\zeta_i)=0,
\]
to write
 \begin{align*}
\sigma(\frac{\mathbf{f}_{123}}{\rho_0})(\zeta)=&-\frac{1}{2}\sigma(\mathbf{v}_{\sigma(1)\sigma(2)})\cdot\xi_{\sigma(3)}\sigma(\mathbf{v}_{\sigma(3)})-\frac{1}{2}\sigma(\mathbf{v}_{\sigma(1)})\cdot(\xi_{\sigma(2)}+\xi_{\sigma(3)})\sigma(\mathbf{v}_{\sigma(2)\sigma(3)})\\
 &-\frac{\gamma-2}{2}\frac{c_0^2}{\rho^2_0}\left(\xi_{\sigma(1)}\sigma(\rho_{\sigma(1)})\sigma(\rho_{\sigma(2)\sigma(3)})+\sigma(\rho_{\sigma(1)})(\xi_{\sigma(2)}+\xi_{\sigma(3)})\sigma(\rho_{\sigma(2)\sigma(3)})\right)\\
 &-(\gamma-2)^2\frac{c_0^2}{\rho_0^3}(\xi_1+\xi_2+\xi_3)\sigma(\rho_1)\sigma(\rho_2)\sigma(\rho_3)\\
= &-\frac{1}{2}\sigma(\mathbf{v}_{\sigma(1)\sigma(2)})\cdot\xi_{\sigma(3)}\sigma(\mathbf{v}_{\sigma(3)})-\frac{1}{2}\sigma(\mathbf{v}_{\sigma(1)})\cdot(\xi_{\sigma(2)}+\xi_{\sigma(3)})\sigma(\mathbf{v}_{\sigma(2)\sigma(3)})\\
&-\frac{1}{2}\rho_0^{-1}\left((\tau_{\sigma(1)}+\tau_{\sigma(2)})\sigma(\rho_{\sigma(1)\sigma(2)})+\rho_0(\xi_{\sigma(1)}+\xi_{\sigma(2)})\cdot\sigma(\mathbf{v}_{\sigma(1)\sigma(2)})\right)\sigma(\mathbf{v}_{\sigma(3)})\\
&-\frac{1}{2}\rho_0^{-1}\left((\tau_{\sigma(1)}+\tau_{\sigma(2)})\sigma(\mathbf{v}_{\sigma(1)\sigma(2)})+\frac{c_0^2}{\rho_0}(\xi_{\sigma(1)}+\xi_{\sigma(2)})\sigma(\rho_{\sigma(1)\sigma(2)})\right)\sigma(\rho_{\sigma(3)})\\
&-\frac{1}{2}\rho_0^{-1}\left(\tau_{\sigma(3)}\sigma(\rho_{\sigma(3)})+\xi_{\sigma(3)}\cdot\sigma(\mathbf{v}_{\sigma(3)})\right)\sigma(\mathbf{v}_{\sigma(1)\sigma(2)})\\
&-\frac{1}{2}\rho_0^{-1}\left(\tau_{\sigma(3)}\sigma(\mathbf{v}_{\sigma(3)})+\xi_{\sigma(3)}\sigma(\rho_{\sigma(3)})\right)\sigma(\rho_{\sigma(1)\sigma(2)})\\
&-\frac{1}{2}\rho_0^{-1}(\xi_{\sigma(1)}+\xi_{\sigma(2)})\cdot\left(\sigma(\rho_{\sigma(1)})\sigma(\mathbf{v}_{\sigma(2)})+\sigma(\rho_{\sigma(2)})\sigma(\mathbf{v}_{\sigma(1)})\right)\sigma(\mathbf{v}_{\sigma(3)})\\
&-\frac{1}{2}\rho_0^{-1}\left(\xi_{\sigma(2)}\cdot\sigma(\mathbf{v}_{\sigma(1)})\sigma(\mathbf{v}_{\sigma(2)})+\xi_{\sigma(1)}\cdot\sigma(\mathbf{v}_{\sigma(2)})\sigma(\mathbf{v}_{\sigma(1)})\right)\sigma(\rho_{\sigma(3)})\\
&-\frac{1}{2}(\gamma-2)\frac{c_0^2}{\rho_0^3}(\xi_{\sigma(1)}+\xi_{\sigma(2)})\sigma(\rho_{\sigma(1)})\sigma(\rho_{\sigma(2)})\sigma(\rho_{\sigma(3)})\\
 &-\frac{\gamma-2}{2}\frac{c_0^2}{\rho^2_0}\left(\xi_{\sigma(1)}\sigma(\rho_{\sigma(1)})\sigma(\rho_{\sigma(2)\sigma(3)})+\sigma(\rho_{\sigma(1)})(\xi_{\sigma(2)}+\xi_{\sigma(3)})\sigma(\rho_{\sigma(2)\sigma(3)})\right)\\
 &-(\gamma-2)(\gamma-3)\frac{c_0^2}{\rho_0^3}(\xi_1+\xi_2+\xi_3)\sigma(\rho_1)\sigma(\rho_2)\sigma(\rho_3).
 \end{align*}
 
 Noticing
 \[
   \begin{split}
& (\xi_{\sigma(1)}+\xi_{\sigma(2)})\cdot\left(\sigma(\rho_{\sigma(1)})\sigma(\mathbf{v}_{\sigma(2)})+\sigma(\rho_{\sigma(2)})\sigma(\mathbf{v}_{\sigma(1)})\right)\sigma(\mathbf{v}_{\sigma(3)})\\
& +\left(\xi_{\sigma(2)}\cdot\sigma(\mathbf{v}_{\sigma(1)})\sigma(\mathbf{v}_{\sigma(2)})+\xi_{\sigma(1)}\cdot\sigma(\mathbf{v}_{\sigma(2)})\sigma(\mathbf{v}_{\sigma(1)})\right)\sigma(\rho_{\sigma(3)})\\
=&(\xi_{\sigma(1)}+\xi_{\sigma(2)}+\xi_{\sigma(3)})\cdot\sigma(\mathbf{v}_{\sigma(2)})\sigma(\mathbf{v}_{\sigma(3)})\sigma(\rho_{\sigma(1)})+(\xi_{\sigma(1)}+\xi_{\sigma(2)}+\xi_{\sigma(3)})\cdot\sigma(\mathbf{v}_{\sigma(1)})\sigma(\mathbf{v}_{\sigma(3)})\sigma(\rho_{\sigma(2)})\\
=&2\xi\cdot\sigma(\mathbf{v}_{\sigma(1)})\sigma(\mathbf{v}_{\sigma(2)})\sigma(\rho_{\sigma(3)}).
   \end{split}
 \]
 
 Then
  \begin{align*}
\sigma(\frac{\mathbf{f}_{123}}{\rho_0})(\zeta)=&-\frac{1}{2}\xi\cdot\sigma(\mathbf{v}_{\sigma(1)\sigma(2)})\sigma(\mathbf{v}_{\sigma(3)})-\frac{1}{2}\xi\cdot\sigma(\mathbf{v}_{\sigma(3)})\sigma(\mathbf{v}_{\sigma(1)\sigma(2)})\\
 &-\frac{1}{2}\rho_0^{-1}\tau\sigma(\rho_{\sigma(1)\sigma(2)})\sigma(\mathbf{v}_{\sigma(3)})-\frac{1}{2}\rho_0^{-1}\tau\sigma(\mathbf{v}_{\sigma(1)\sigma(2)})\sigma(\rho_{\sigma(3)})\\
 &-\frac{\gamma-1}{2}\frac{c_0^2}{\rho_0^2}\xi\sigma(\rho_{\sigma(1)\sigma(2)})\sigma(\rho_{\sigma(3)})\\
 &-2\xi\cdot\sigma(\mathbf{v}_{\sigma(1)})\sigma(\mathbf{v}_{\sigma(2)})\sigma(\rho_{\sigma(3)})\\
 &-(\gamma-2)(\gamma-1)\frac{c_0^2}{\rho_0^3}(\xi_1+\xi_2+\xi_3)\sigma(\rho_1)\sigma(\rho_2)\sigma(\rho_3).
 \end{align*}
 Also we have
 \[
\sigma(h_{123})(\zeta)=-\frac{1}{2}\xi\cdot(\sigma(\rho_{\sigma(1)})\sigma(\mathbf{v}_{\sigma(2)\sigma(3)})+\sigma(\rho_{\sigma(1)\sigma(2)})\sigma(\mathbf{v}_{\sigma(3)})).
 \]
 Now the projection of $(h_{123},\mathbf{f}_{123}/\rho_0)$ onto the eigenspace of $\sigma(P)$ associated with eigenvalues $\tau\pm c_0|\xi|$ is
 \[
   \begin{split}
 & \rho_0\xi\cdot\sigma(\frac{\mathbf{f}_{123}}{\rho_0})(\zeta)- \tau\sigma(h_{123})(\zeta)\\
=& -\rho_0\xi\cdot\sigma(\mathbf{v}_{\sigma(1)\sigma(2)})\xi\cdot\sigma(\mathbf{v}_{\sigma(3)})-\frac{\gamma-1}{2}\rho_0^{-1}\tau^2\sigma(\rho_{\sigma(1)\sigma(2)})\sigma(\rho_{\sigma(3)})\\
&-2\xi\cdot\sigma(\mathbf{v}_{\sigma(1)})\xi\cdot\sigma(\mathbf{v}_{\sigma(2)})\sigma(\rho_{\sigma(3)})\\
&-(\gamma-2)(\gamma-1)\frac{c_0^2}{\rho_0^2}\sigma(\rho_1)\sigma(\rho_2)\sigma(\rho_3).
   \end{split}
 \]

Note that
 \begin{align*}
\xi\cdot\xi_1=&-2b(r_0)(1-b(r_0))\varsigma^{-2}+(1-b(r_0))+\frac{b(r_0)}{2}(1-b(r_0))+\mathcal{O}(\varsigma),\\
\xi\cdot\xi_2=&b(r_0)(1-b(r_0))\varsigma^{-2}+\frac{r_0}{2}(1-b(r_0))\varsigma^{-1}+r_0b(r_0)\varsigma^{-1}-\frac{3}{4}b(r_0)(1-b(r_0))+\frac{r_0^2}{2}+\mathcal{O}(\varsigma),\\
\xi\cdot\xi_3=&b(r_0)(1-b(r_0))\varsigma^{-2}-\frac{r_0}{2}(1-b(r_0))\varsigma^{-1}-r_0b(r_0)\varsigma^{-1}-\frac{3}{4}b(r_0)(1-b(r_0))+\frac{r_0^2}{2}+\mathcal{O}(\varsigma).
\end{align*}
 We calculate
 \[
 \begin{split}
 &\frac{1}{2}\tau^2\rho_0^{-1}\sigma(\rho_{\sigma(1)\sigma(2)})\sigma(\rho_{\sigma(3)})\\
 =&\frac{3}{2}(\gamma+1)\rho_0c_0^5r_0^2b(r_0)a_1a_2a_3-3(\gamma+1)\rho_0c_0^5b(r_0)^2a_1a_2a_3+\frac{11\gamma+7}{2}\rho_0c_0^5b(r_0)^3a_1a_2a_3+\mathcal{O}(\varsigma)a_1a_2a_3,
  \end{split}
 \]
 \[
    \begin{split}
&\frac{1}{2}\xi\cdot\sigma(\mathbf{v}_{\sigma(1)\sigma(2)})\xi\cdot\sigma(\mathbf{v}_{\sigma(3)})\\
=&\frac{3(\gamma+1)}{2}\rho_0c_0^5r_0^2b(r_0)a_1a_2a_3-\frac{3\gamma+23}{2}\rho_0c_0^5r_0^2b(r_0)^3a_1a_2a_3+3(\gamma+1)r_0^2b(r_0)^2a_1a_2a_3\\
&+\frac{\gamma+21}{2}\rho_0c_0^5b(r_0)^3a_1a_2a_3
-2(\gamma+1)\rho_0c_0^5r_0b(r_0)^3a_1a_2a_3\\
&+2(\gamma+1)\rho_0c_0^5r_0b(r_0)^4a_1a_2a_3-\frac{\gamma+1}{2}\rho_0c_0^5r_0^4b(r_0)a_1a_2a_3-3(\gamma+1)\rho_0c_0^5b(r_0)^2a_1a_2a_3\\
&+\mathcal{O}(\varsigma)a_1a_2a_3,
   \end{split}
 \]

 \begin{flalign*}
&\quad\quad2\xi\cdot\sigma(\mathbf{v}_{\sigma(1)})\xi\cdot\sigma(\mathbf{v}_{\sigma(2)})\sigma(\rho_{\sigma(3)})=-24\rho_0c_0^5b(r_0)^3(1-r_0^2)a_1a_2a_3+\mathcal{O}(\varsigma)a_1a_2a_3,&
\end{flalign*}
 \begin{flalign*}
 &\quad\quad\frac{c_0^2}{\rho_0^2}\sigma(\rho_1)\sigma(\rho_2)\sigma(\rho_3)=-2\rho_0c_0^5b(r_0)^3a_1a_2a_3+\mathcal{O}(\varsigma)a_1a_2a_3.&
\end{flalign*}
 
 When $r_0=1$, $b(r_0)=1$, we have
 \[
 \xi\cdot\sigma(\mathbf{f}_{123})(\zeta)- \tau\sigma(h_{123})(\zeta)=\left(-2(\gamma-1)(\gamma+3)+\mathcal{O}(\varsigma)\right)a_1(\zeta_1)a_2(\zeta_2)a_3(\zeta_3),
 \]
 which is nonzero for sufficiently small $\varsigma$. Denoting $\xi=(\cos\beta,\sin\beta,0)$, we can express $\xi\cdot\sigma(\mathbf{f}_{123})- \tau\sigma(h_{123})=A(\beta)r_0c_0^5a_1a_2a_3+\mathcal{O}(\varsigma)a_1a_2a_3$, where $\beta\in (\delta,2\pi-\delta)$ with some small $\delta>0$. Here $\delta$ and $\varsigma$ are chosen such that $(-\varsigma,\varsigma)\subset(-\sin\delta,\sin\delta)\subset (-|r_0|,|r_0|)$. Keep in mind that $\cos\beta$ should be away from $1$. Since $\xi\cdot\sigma(\mathbf{f}_{123})- \tau\sigma(h_{123})$ is analytic in $\beta$,
it is generically non-vanishing for $\beta\in (\delta,2\pi-\delta)$. 
Therefore $\sigma(\rho_{123})(\eta)$ is generically non-vanishing for $\eta\in\mathrm{CP}(\eta_1,\eta_2,\eta_3)$ sufficiently close to $\eta_0$.
\end{proof}

\section{Determination of the conformal structure}\label{chconformal}
\begin{proposition}\label{hausdorffthreewaves}
 If $\mathrm{CP}(\eta_1,\eta_2,\eta_3)=\emptyset$, the Hausdorff dimension of $\mathrm{sing\,supp}(\rho_{123},\mathbf{v}_{123})$ tends to a set $\mathcal{Y}(\eta_1,\eta_2,\eta_3)$ whose Hausdorff dimension is at most $1$.
 \end{proposition}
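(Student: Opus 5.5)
The plan is to go through the wavefront-set description of $(\rho_{123},\mathbf{v}_{123})$ from Section~\ref{nonlinearinteraction} piece by piece, and to show that when $\mathrm{CP}(\eta_1,\eta_2,\eta_3)=\emptyset$ every Lagrangian that could carry singularities into $V$ projects to a set that collapses, as $s_0\to 0$, onto something of Hausdorff dimension at most $1$.

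\textbf{Step 1: decompose the wavefront set.} By the analysis there, the singularities of $(\rho_{123},\mathbf{v}_{123})$ lie in the union of the following pieces:
\[
\bigcup_i\Lambda_i,\quad \bigcup_{i\neq j}\Lambda_{ij},\quad \bigcup_{i\neq j}\Lambda^\tau_{ij},\quad \Lambda^{(3)},\quad \Lambda^{(3),g},\quad \Lambda_{123},\quad \Lambda^\tau_{123},\quad \Lambda^g_{123}.
\]
The last one, $\Lambda_{123}^g$, is the flowout of $\Lambda_{123}\cap\{\tau\pm c_0|\xi|=0\}$ under $H_{\tau\pm c_0|\xi|}$. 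This membership uses the paired Lagrangian statements already quoted: \cite[Proposition 2.1]{greenleaf1993recovering} for the second-order terms and the parametrix $Q$ of $P$.

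\textbf{Step 2: discard the pieces that are fine or stay away from $V$.}
\begin{itemize}
\item The shrinking of $V$ to a union of integral curves of $\partial_t+\mathbf{v}_0\cdot\nabla$ gives $V\cap\pi\big(\bigcup\Lambda_{ij}\cup\bigcup\Lambda^\tau_{ij}\cup\Lambda^{(3)}\cup\Lambda^\tau_{123}\big)=\emptyset$. These pieces therefore play no role.
\item As already noted at the end of the three-wave subsection, $\pi(\Lambda^{(3),g}\cup\bigcup_i\Lambda_i)$ tends to a set of Hausdorff dimension at most $1$ as $s_0\to0$. The reason: each $\Lambda_i=\Lambda(\zeta_i,s_0)$ collapses to the single bicharacteristic through $\zeta_i$, whose projection is a null geodesic. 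Likewise $K^\tau_{ij}\cap K_k$ shrinks to a point or curve, and its light-like flowout is generated by finitely many null geodesics in the limit. I would write this out by noting that the parameter set $\mathcal W_{\zeta_i,s_0}$ shrinks to $\{\zeta_i\}$, so that each surface converges in Hausdorff distance to a one-dimensional curve.
\end{itemize}

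\textbf{Step 3: the genuinely new piece $\Lambda^g_{123}$.} Singularities on $\Lambda^g_{123}$ arise only from light-like covectors $\zeta\in N^*_qK_{123}$. As $s_0\to0$, $K_{123}$ shrinks to the point $q$ where the three geodesics $\gamma_{\eta_j}$ meet forward in time (and if they do not meet, $K_{123}=\emptyset$ for small $s_0$). There $N^*_qK_{123}=\mathrm{span}(\zeta_1,\zeta_2,\zeta_3)$ with $\zeta_j=\dot\gamma_{\eta_j}(s_j)^\flat$. For such a singularity to reach $V$, the null bicharacteristic from $(q,\zeta)$ must pass over $V$; equivalently, there is $\eta_0\in L^{*,+}V$ with $\gamma_{\eta_0}(s_0')=q$ for some $s_0'<0$ and $\zeta\in\mathrm{span}(\zeta_1,\zeta_2,\zeta_3)$. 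By the definition of the three-to-one scattering relation, this says exactly that $\eta_0\in\mathrm{CP}(\eta_1,\eta_2,\eta_3)$. So if the conical piece is empty, $\pi(\Lambda^g_{123})\cap V$ becomes empty in the limit.

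\textbf{Step 4: $\Lambda_{123}$ itself and the perturbation in $s_0$.} The piece $\Lambda_{123}$ projects into $K_{123}$, which shrinks to a point and contributes dimension $0$.

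The main obstacle is the limit argument: for $s_0>0$ the covectors vary in open sets, so the Step 3 exclusion only holds up to perturbation. I would handle it by compactness. Suppose that for a sequence $s_0\to0$ singular points persist in a compact subset of $V$ away from the limiting curves. Extract a convergent subsequence of the corresponding bicharacteristics. The limit yields an element of $\mathrm{CP}(\eta_1,\eta_2,\eta_3)$, using the closedness of the span condition and of the geodesic flow together with the absence of cut points. This contradicts emptiness, so $\mathrm{sing\,supp}(\rho_{123},\mathbf{v}_{123})\cap V$ is contained in a neighbourhood of $\mathcal Y(\eta_1,\eta_2,\eta_3)$ that shrinks to it. Here $\mathcal Y$ is the union of the finitely many null geodesics $\gamma_{\eta_j}$ and the limit light rays from Step 2, and it has Hausdorff dimension at most $1$.
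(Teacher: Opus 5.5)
Your proposal is correct and follows essentially the paper's argument. In both, the singularities reaching $V$ lie in $\pi(\Lambda^{(3),g}\cup\bigcup_i\Lambda_i)$, which collapses to finitely many null geodesics, and emptiness of the conical piece excludes the flowout $\Lambda^g_{123}$; the paper arranges this as a case split on how the three geodesics intersect rather than piece by piece, and leaves your compactness step implicit. The one detail you use tacitly and the paper states explicitly is that at a triple point $\zeta_1,\zeta_2,\zeta_3$ are linearly independent, since they are pairwise non-parallel null covectors (cf.\ \cite[Lemma 6.12]{feizmohammadi2021inverse}); this is what makes $K_{123}$ codimension $3$ with $N^*_qK_{123}=\mathrm{span}(\zeta_1,\zeta_2,\zeta_3)$, so that the decomposition you invoke in Step 1 applies.
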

\begin{proof}
If $\gamma_{\eta_j}\cap\gamma_{\eta_k}=\emptyset$ for any $j\neq k$, then for $s_0>0$ sufficiently small, $\mathrm{sing\,supp}(\rho_{123},\mathbf{v}_{123})$ is contained in $\pi(\Lambda_1\cup\Lambda_2\cup \Lambda_3)$, which tends to a set of Hausdorff dimension $1$.

Now assume $\gamma_{\eta_1}\cap\gamma_{\eta_2}\cap\gamma_{\eta_3}=\{q\}$. Note that $\zeta_1,\zeta_2,\zeta_3$ can not be linearly dependent by \cite[Lemma 6.12]{feizmohammadi2021inverse}. By the analysis in the previous section, the singular support of $(\rho_{123},\mathbf{v}_{123})\vert_V$ is contained in $\pi(\Lambda^{(3),g}\cup \bigcup_{i=1}^3\Lambda_i)$ which tends to a union of finitely many bicharacteristic curves, whose Hausdorff dimension is $1$, as $s_0\rightarrow 0$.

Next without loss of generality assume $\gamma_{\eta_1}\cap\gamma_{\eta_2}=\{q\}$ and $q\notin\gamma_{\eta_3}$. It is possible that $\gamma_{\eta_3}\cap\Lambda^\tau_{12}\neq\emptyset$. Still the set $\Lambda_{\tau\pm c_0|\xi|}(K_{12}^\tau \cap K_3)\setminus (\Lambda_{12}^\tau \cap \Lambda_3)$ tends to a set of Hausdorff dimension at most $1$ by previous analysis.
\end{proof}

Denote
\[
C(q)=\{(\gamma_{\zeta}(s),\dot{\gamma}_{\zeta}(s)^\flat):\zeta\in L^{*,+}_q(M),s\in\mathbb{R}^+\}
\]
to be the flowout from the point $q$. 
Clearly
\[
\pi(C(q))=\mathcal{L}^+(q)=\{\gamma_{\zeta}(t)\in M;\zeta\in L_q^{*,+} M,t\geq 0\}\subset M
\]
is the future directed light-cone emanating from the point $q$. 
Denote
\[
E_V(q)=\{\eta\in \overline{C}(q):\pi(\eta)\in V\}
\]
to be the set of earliest observations in $V$ associated with the point $q$.
 
  \begin{proposition}\label{prop4}
 Fix $\eta_1\in L^{*,+}V$ and $q\in\gamma_{\eta_1}(\mathbb{R}^+)\cap I(p_-,p_+)$, then $\eta_0^{(1)},\eta_0^{(2)}\in L^{*,+}V$, where $\eta_0^{(1)},\eta_0^{(2)}$ is not on the null bi-characteristic curve through $\eta_1$, satisfy $\eta_0^{(1)},\eta_0^{(2)}\subset E_V(q)$ if and only if there is a continuous one-parameter family $(\eta_2(\varpi),\eta_3(\varpi))$ such that $\eta_2(\varpi),\eta_3(\varpi)\in L^{*,+}V$,
 \[
 \gamma_{\eta_1}\cap \gamma_{\eta_2(\varpi)}\cap  \gamma_{\eta_3(\varpi)}=\{q\},
 \]
 and $\mathrm{CP}(\eta_1,\eta_2(\varpi),\eta_3(\varpi))\neq\emptyset$ for any $\varpi\in [\varpi^{(1)},\varpi^{(2)}]$ with
 \[
\eta_0^{(1)}\in\mathrm{CP}(\eta_1,\eta_2(\varpi^{(1)}),\eta_3(\varpi^{(1)})),\quad \eta_0^{(2)}\in\mathrm{CP}(\eta_1,\eta_2(\varpi^{(2)}),\eta_3(\varpi^{(2)})).
\]
Furthermore, for each $\varpi$ there exists $\eta_0(\varpi)\in \mathrm{CP}(\eta_1,\eta_2(\varpi),\eta_3(\varpi))$, which is continuous in $\varpi$, such that $\eta_0(\varpi^{(1)})=\eta_0^{(1)}$, $\eta_0(\varpi^{(2)})=\eta_0^{(2)}$ and $\sigma(\rho_{123},\mathbf{v}_{123})(\eta_0;\eta_1,\eta_2(\varpi),\eta_3(\varpi))$ is generically non-vanishing for $\eta_0\in \mathrm{CP}(\eta_1,\eta_2(\varpi),\eta_3(\varpi))$ sufficiently close to $\eta_0(\varpi)$.
 \end{proposition}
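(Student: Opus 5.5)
The plan is to prove the two implications separately, working entirely in the cotangent fiber at $q$. Proposition~\ref{nonvanishing} supplies the local nonvanishing statement, and the definition of the three-to-one scattering relation supplies the geometry. Throughout, write $\zeta_1=\dot\gamma_{\eta_1}(s_1)^\flat\in L^{*,+}_qM$, where $\gamma_{\eta_1}(s_1)=q$. Since $I_g(p^-,p^+)$ contains no cut points, each null geodesic from $q$ reaches $V$ without conjugate or cut points. Hence $E_V(q)$ is the image of a smooth, injective-up-to-scaling map $L^{*,+}_qM/\mathbb{R}_+\to L^{*,+}V$, obtained by flowing along the bicharacteristic until the first time it enters $V$. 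This identification of $E_V(q)$ with a connected two-dimensional piece of the celestial sphere $S_q:=L^{*,+}_qM/\mathbb{R}_+$ is the basic tool.

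($\Rightarrow$) Suppose $\eta_0^{(1)},\eta_0^{(2)}\in E_V(q)$. They correspond to directions $\theta^{(1)},\theta^{(2)}\in S_q$, both distinct from the direction of $\zeta_1$. Choose a continuous path $\theta(\varpi)$, $\varpi\in[\varpi^{(1)},\varpi^{(2)}]$, in $S_q\setminus\{[\zeta_1]\}$ joining them; this is possible since the sphere minus a point is connected. For each $\varpi$, repeat the normal-form construction in the proof of Proposition~\ref{nonvanishing}. Choose coordinates at $q$ depending continuously on $\varpi$ so that $\zeta_1\sim\theta_1=(-c_0,1,0,0)$ and $\theta(\varpi)$ lies in the $(x^1,x^2)$ plane. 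Then take $\theta_2(\varpi),\theta_3(\varpi)$ to be the two directions $(-c_0,\sqrt{1-\varsigma^2},\pm\varsigma,0)$ in that rotated frame, with a fixed small $\varsigma$. Let $\eta_2(\varpi),\eta_3(\varpi)$ be their pullbacks along the null bicharacteristics back into $V$. Such pullbacks exist because $q\in I(p_-,p_+)$: every past null geodesic from $q$ meets $\mu([s_-,s_+])\subset V$. This gives $\gamma_{\eta_1}\cap\gamma_{\eta_2}\cap\gamma_{\eta_3}=\{q\}$. Moreover $\theta(\varpi)\in\operatorname{span}(\theta_1,\theta_2,\theta_3)$ by the explicit formula for $\alpha_1,\alpha_2,\alpha_3$. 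So the forward image $\eta_0(\varpi)$ of $\theta(\varpi)$ in $E_V(q)$ lies in $\mathrm{CP}(\eta_1,\eta_2(\varpi),\eta_3(\varpi))$, depends continuously on $\varpi$, and has the right endpoints. The generic nonvanishing of $\sigma(\rho_{123},\mathbf{v}_{123})$ near $\eta_0(\varpi)$ is then exactly Proposition~\ref{nonvanishing} applied at each $\varpi$.

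($\Leftarrow$) Conversely, suppose such a family exists. By the definition of $\mathcal{R}$, each $\eta_0\in\mathrm{CP}(\eta_1,\eta_2(\varpi),\eta_3(\varpi))$ has $\gamma_{\eta_0}(\mathbb{R}^-)$ passing through a common point of the three forward geodesics. The only such point is $q$, so $\eta_0$ lies on $C(q)$. To conclude that $\eta_0^{(j)}\in E_V(q)$, i.e.\ that it is an \emph{earliest} observation, argue as follows. The absence of cut points in $I(p_-,p_+)$ means that the portion of $\gamma_{\eta_0}$ between $q$ and $V$ lies on the boundary $\partial J^+(q)$. Points of $C(q)\cap V$ that lie after a first entry into $V$ are excluded by shrinking $V$, using that $V$ is a union of flow lines of $\partial_t+\mathbf{v}_0\cdot\nabla$, so each null geodesic from $q$ enters $V$ once near $\mu$. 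The continuous family $\eta_0(\varpi)$ is needed to stay on a single connected component of $C(q)\cap L^{*,+}V$, namely the component containing the observation along the first entry. That is precisely what the one-parameter connectedness condition encodes.

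The main obstacle is this last step: showing rigorously that continuity of $\eta_0(\varpi)$ rules out jumping to non-earliest points of $C(q)\cap L^{*,+}V$. I would attempt it by showing that the earliest branch is open and closed in $C(q)\cap L^{*,+}V$ along the path. Openness follows from the absence of cut points, which makes the first-entry map a local diffeomorphism. Closedness follows from continuity of the time of first entry. A secondary technical point is choosing the frames continuously in $\varpi$. This is done by parallel transport of an orthonormal frame along the path in $S_q$.
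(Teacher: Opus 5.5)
Your ($\Rightarrow$) direction is the paper's own argument. You normalize $\zeta_1$, rotate about the $\mathbf{e}_1$-axis so that $\zeta_0(\varpi)$ has no $\mathbf{e}_3$-component, take the symmetric pair $(-c_0,\sqrt{1-\varsigma^2},\pm\varsigma,0)$ in the rotated frame, pull back to $V$, and invoke Proposition~\ref{nonvanishing}. Two of your supporting claims are false, however.

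\begin{itemize}
\item It is not true that every past null geodesic from $q$ meets $\mu([s_-,s_+])$. $\mu$ is timelike and $\partial J^-(q)$ is achronal, so $\mu$ crosses the past cone of $q$ at a single point, i.e.\ along one generator. The real reason $\eta_2(\varpi),\eta_3(\varpi)$ exist is different. The directions $\theta_{2,3}(\varpi)$ are $\mathcal{O}(\varsigma)$-close to $\zeta_1$, whose backward geodesic reaches $V$ at $\eta_1$. Openness of $V$ and continuity of the flow then give the pullbacks for $\varsigma$ small, uniformly on the compact parameter interval.
\item Likewise, not every forward null geodesic from $q$ enters $V$. So $E_V(q)$ is parametrized by an open subset $O$ of the celestial sphere at $q$, not the whole sphere. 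Since $\mathrm{CP}\subset L^{*,+}V$, your path $\theta(\varpi)$ must stay in $O\setminus\{[\zeta_1]\}$. ``The sphere minus a point is connected'' does not give this. You need connectedness of $O\setminus\{[\zeta_1]\}$, which the paper also tacitly assumes when it writes down a continuous $\zeta_0(\varpi)$.
\end{itemize}

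For ($\Leftarrow$), your first step already completes the proof, and this is what the paper calls ``obvious.'' The paper defines $E_V(q)=\{\eta\in\overline{C}(q):\pi(\eta)\in V\}$ with no first-entry condition. So once condition (1) of $\mathcal{R}$ forces the common point to be $q$, you have $\eta_0^{(j)}\in C(q)$ over $V$, and you are done.

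The rest of your argument should be dropped, because the claim it aims at is false. Membership in $\mathrm{CP}(\eta_1,\eta_2,\eta_3)$ depends only on the span condition at $q$. Hence every point of $L^{*,+}V$ on the forward bicharacteristic of $\zeta_0$ lies in $\mathrm{CP}$, including points beyond the first entry into $V$. A constant family $(\eta_2,\eta_3)$, with $\eta_0^{(1)}$ such a later point, satisfies all your hypotheses. The ``first-entry branch'' is therefore not open in $C(q)\cap L^{*,+}V$: later points on the same generator lie in the same connected component. So your open/closed argument cannot work, and shrinking $V$ does not help, since a generator crosses $V$ along a segment.

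First entry is also not the relevant notion of ``earliest.'' In the sense of \cite{kurylev2018inverse}, points of $\mathcal{L}^+(q)$ on the achronal set $\partial J^+(q)$ are never chronologically related. So, absent cut points before $V$, every point of $\pi(C(q))\cap V$ is an earliest observation anyway.
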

 \begin{proof}
  The sufficiency of the condition is obvious. We only need to prove the necessity. Denote
  \[
  \zeta_1=\dot{\gamma}_{\eta_1}(s_1)^\flat,
  \]
  where $\gamma_{\eta_1}(s_1)=q$.
Without loss of generality, assume under certain coordinates at $q$ we can take
  \[
\zeta_0(\varpi)=(-1,\pm\sqrt{1-r_0^2(\varpi)},r_0(\varpi)\cos\theta(\varpi)),r_0(\varpi)\sin\theta(\varpi))),\quad  \zeta_1=(-1,1,0,0),
 \]
 where $\zeta_0(\varpi^{(1)})=\zeta_0^{(1)}$, $\zeta_0(\varpi^{(2)})=\zeta_0^{(2)}$ and $r_0,\theta$ are continuous in $\varpi\in [\varpi^{(1)},\varpi^{(2)}]$.
Then we take
 \[
 \zeta_2(\varpi)=(-1,\sqrt{1-\varsigma^2},\varsigma\cos\theta(\varpi),\varsigma\sin\theta(\varpi)),\quad   \zeta_3(\varpi)=(-1,\sqrt{1-\varsigma^2},-\varsigma\cos\theta(\varpi),-\varsigma\sin\theta(\varpi)),
 \]
 with $\varsigma=\varsigma(\varpi)$ sufficiently small, and $\eta_j(\varpi)=\dot{\gamma}_{\zeta_j(\varpi)}(s_j(\varpi))\in L^{*,+}V$ with some $s_j(\varpi)\in\mathbb{R}^-$. Noticing that the non-vanishing property of the principal symbol is guaranteed by the proof of Proposition \ref{nonvanishing}, the proof is complete.
 \end{proof}

  \begin{lemma}
 If $\mathrm{CP}(\eta_1,\eta_2,\eta_3)\neq\emptyset$ and $\mathrm{CP}(\eta_1,\eta_2,\eta_3')\neq\emptyset$, then
 \[
\gamma_{\eta_1}\cap \gamma_{\eta_2}\cap \gamma_{\eta_3}=\gamma_{\eta_1}\cap \gamma_{\eta_2}\cap \gamma_{\eta_3'}\neq\emptyset.
 \]
 \end{lemma}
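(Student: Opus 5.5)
Throughout, $\mathrm{CP}$ is the conical piece of the three-to-one scattering relation $\mathcal{R}$, and we use only the two defining properties of $\mathcal{R}$. Take $\eta_0\in\mathrm{CP}(\eta_1,\eta_2,\eta_3)$. Property (1) gives a point
\[
y\in\gamma_{\eta_0}(\mathbb{R}^-)\cap\gamma_{\eta_1}(\mathbb{R}^+)\cap\gamma_{\eta_2}(\mathbb{R}^+)\cap\gamma_{\eta_3}(\mathbb{R}^+),
\]
so $\gamma_{\eta_1}\cap\gamma_{\eta_2}\cap\gamma_{\eta_3}\neq\emptyset$. The same argument with $\eta_0'\in\mathrm{CP}(\eta_1,\eta_2,\eta_3')$ gives a point $y'$ in $\gamma_{\eta_1}\cap\gamma_{\eta_2}\cap\gamma_{\eta_3'}$. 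Hence both triple intersections are nonempty, and it remains to show that they coincide.

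Both triple intersections lie in $\gamma_{\eta_1}\cap\gamma_{\eta_2}$. It therefore suffices to show that $\gamma_{\eta_1}$ and $\gamma_{\eta_2}$ meet in at most one point. Given that, both triple intersections equal the unique point $\{y\}=\gamma_{\eta_1}\cap\gamma_{\eta_2}$, so $y=y'$ and the lemma follows.

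The uniqueness is where the standing geometric hypotheses enter.
\begin{itemize}
\item The bicharacteristics through $\eta_1$ and $\eta_2$ are distinct, which is part of our standing convention. So $\gamma_{\eta_1}$ and $\gamma_{\eta_2}$ are distinct null geodesics.
\item Suppose they met at two points $y_1\neq y_2$, with $y_1<y_2$ along both. Then $y_2$ would be joined to $y_1$ by two distinct null geodesics.
\item By the standard fact from Lorentzian geometry (used in \cite{kurylev2018inverse,feizmohammadi2021inverse}), $y_2$ is then at or beyond the null cut point of $y_1$ along each of these geodesics. In particular $\tau(y_1,y_2)>0$ by the short-cut argument, so the segment is not maximizing.
\end{itemize}

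To obtain a contradiction we need the relevant points to lie in the region $I_g(p^-,p^+)$, which contains no cut points by hypothesis. This is how the paper uses the lemma. In Proposition \ref{prop4} the intersection point $q$ lies in $I(p_-,p_+)$. Moreover, the sources and observations sit in $V$, a neighborhood of $\mu$, where $\mu$ is a timelike integral curve of $\partial_t+\mathbf{v}_0\cdot\nabla$. So the entire null segments between the relevant points lie in $J_g(p^-,p^+)$, and we may shrink $V$ and the $\eta_j$ accordingly. Under these conditions the absence of cut points rules out two distinct intersection points, as in \cite[Lemma 6.12]{feizmohammadi2021inverse}.

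The main obstacle is exactly this localization. The lemma as stated does not say that $y$ and $y'$ lie in $I_g(p^-,p^+)$. I would first try to use the fact that $\gamma_{\eta_j}$ start from $V$ near $\mu([s_-,s_+])$, together with global hyperbolicity of $M$, to argue that any intersection point relevant to $\mathcal{R}$ lies in $J(p^-,p^+)$. Failing that, I would state the lemma under the implicit assumption, consistent with Proposition \ref{prop4}, that the intersection points lie in $I_g(p^-,p^+)$, and then apply the no-cut-point hypothesis.
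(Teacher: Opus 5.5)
Your argument is the one the paper has in mind when it calls the lemma obvious. Property (1) of $\mathcal{R}$ gives nonemptiness, and since both triple intersections lie inside $\gamma_{\eta_1}\cap\gamma_{\eta_2}$, uniqueness of the intersection point of two distinct null geodesics under the no-cut-point hypothesis makes them coincide; the restriction of the intersection points to $I_g(p^-,p^+)$ is left implicit in the paper exactly as you flag, in line with its use in Proposition~\ref{prop4}. One small correction: two distinct null geodesics from $y_1$ to $y_2$ only force $y_2$ to be at or beyond the null cut point of $y_1$, not $\tau(y_1,y_2)>0$ (antipodal refocusing in $\mathbb{R}\times S^2$ gives $\tau=0$), but that weaker statement is all your argument actually uses.
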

 
 The above lemma is obvious and means that we can always fix two null-geodesics and vary the third one without changing the intersecting point. 
 
Use the same notations as in Proposition \ref{prop4}. For any $\varpi$, there is a small neighborhood $U(\varpi)$ of $(\eta_2(\varpi),\eta_3(\varpi))$ in $L^{*,+}(V)\times L^{*,+}(V)$ such that for any $(\eta_2,\eta_3)\subset U(\varpi)$, if $\gamma_{\eta_1}\cap\gamma_{\eta_2}\cap \gamma_{\eta_3}\neq \emptyset$ then $\mathrm{CP}(\eta_1,\eta_2,\eta_3)\neq\emptyset$. By compactness argument, we can cover $(\eta_2([\varpi^{(1)},\varpi^{(2)}]),\eta_3([\varpi^{(1)},\varpi^{(2)}]))$ by finitely many open set $U(\varpi^{(k)})$, $k=1,2,\cdots, K$. Then $(\eta_2([\varpi^{(1)},\varpi^{(2)}]),\eta_3([\varpi^{(1)},\varpi^{(2)}]))$ has a $\delta$-neighborhood such that for any $(\eta_2,\eta_3)$ in this neighborhood, we have if $\gamma_{\eta_1}\cap\gamma_{\eta_2}\cap \gamma_{\eta_3}\neq \emptyset$, then $\mathrm{CP}(\eta_1,\eta_2,\eta_3)\neq \emptyset$, and $\sigma(\rho_{123},\mathbf{v}_{123})(\eta_0;\eta_1,\eta_2,\eta_3)$ is generically non-vanishing at least for $\eta_0$ in an open subset of $\mathrm{CP}(\eta_1,\eta_2,\eta_3)$. Then we can have a (different) continuous one-parameter family $(\eta_2(\varpi),\eta_3(\varpi))$ with a division $\varpi^{(1)}=\varpi_0<\varpi_2<\cdots<\varpi_M=\varpi^{(2)}$ of $[\varpi^{(1)},\varpi^{(2)}]$ such that for each $k$, 
\[
\eta_2(\varpi)=\eta_2(\varpi_{k-1})\quad\text{or}\quad \eta_3(\varpi)=\eta_3(\varpi_{k-1})
\]
for any $\varpi\in (\varpi_{k-1},\varpi_k)$. Still we keep the property
\[
\gamma_{\eta_1}\cap \gamma_{\eta_2(\varpi)}\cap \gamma_{\eta_3(\varpi)}=\{q\},
\]
for any $\varpi\in [\varpi^{(1)},\varpi^{(2)}]$. Notice that on each $[\varpi_{k-1},\varpi_k]$, either $\eta_2(\varpi)$ or $\eta_3(\varpi)$ remains the same.

\begin{lemma}\label{Cq1q2}
If $q_1\neq q_2$, then $\pi(C(q_1)\cap C(q_2))$ has Hausdorff dimension at most $1$.
\end{lemma}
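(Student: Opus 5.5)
We prove the lemma by looking at the base points of $C(q_1)\cap C(q_2)$ and showing they lie on at most one null geodesic segment; that set has Hausdorff dimension $1$.

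\textbf{Reduction.} An element $(z,\zeta)\in C(q_1)\cap C(q_2)$ is a point of $T^*M$. By definition of $C(q_i)$, the null bicharacteristic through $(z,\zeta)$ passes over $q_1$ (at a nonpositive parameter) and over $q_2$. Bicharacteristics are determined by any one of their points. So the bicharacteristic through $(z,\zeta)$ is a single null geodesic $\gamma$ with $q_1,q_2\in\gamma$. Consequently $\pi(C(q_1)\cap C(q_2))$ is contained in the union of all future-directed null geodesic rays that emanate from $q_1$ and also pass through $q_2$, or vice versa.

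\textbf{Case analysis.}
\begin{itemize}
\item If $q_1$ and $q_2$ are not joined by a null geodesic, the intersection is empty.
\item Otherwise, say $q_2\in\mathcal{L}^+(q_1)$, reached along a null geodesic $\gamma$ from $q_1$. The key point is that such a null geodesic is unique. Suppose two distinct null geodesics joined $q_1$ to $q_2$. Then $q_2$ would be a conjugate or cut point of $q_1$. If it lies beyond the cut locus, it lies in $I^+(q_1)$ and hence is not reached by a future-directed null geodesic that stays on the boundary of the light cone.
\end{itemize}

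\textbf{Applying the standing assumption.} Here we use that $I_g(p^-,p^+)$ contains no cut points. Because of this, the null geodesics from $q_1$ are injective and the exponential map on $L^{*,+}_{q_1}M$ is a diffeomorphism onto $\mathcal{L}^+(q_1)\setminus\{q_1\}$ in the relevant region. Hence $\gamma$ is unique up to reparametrization. Then
\[
\pi(C(q_1)\cap C(q_2))\subset\gamma([s_2,\infty)),
\]
where $\gamma(s_2)=q_2$. This is a smooth curve, so it has Hausdorff dimension $1$.

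\textbf{Main obstacle.} The delicate step is uniqueness of $\gamma$ outside the no-cut-point region: the points of the geodesics relevant to $V$ may leave $I_g(p^-,p^+)$. I would handle this by restricting to the portion that matters, namely earliest observations in $V$ with $q_i\in I(p_-,p_+)$. Along that portion the geodesics reach $V$ before their first cut point. Beyond the first cut point, the ray lies in $I^+(q_1)$ and no longer belongs to $\overline{C}(q_1)$ as a set of earliest points.

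If uniqueness still fails there, the fallback is a dimension count. The geodesics from $q_1$ through $q_2$ form a subset of the $2$-sphere $L^{*,+}_{q_1}M/\mathbb{R}_+$ that must also map into the $2$-dimensional cone $\mathcal{L}^+(q_2)$. Conjugacy along a closed $1$-parameter family of directions is non-generic, so only finitely many directions contribute. A finite union of curves still has dimension $\le 1$.
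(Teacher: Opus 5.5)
Your proposal follows the paper's route: the paper's entire proof is the observation that any element of $C(q_1)\cap C(q_2)$ lies on a null bicharacteristic through both $q_1$ and $q_2$, tacitly using the uniqueness of that bicharacteristic, which you correctly extract from the no-cut-point assumption. Your last paragraph is unnecessary and partly inaccurate. Uniqueness is only needed for the segment from $q_1$ to $q_2$, which for $q_1,q_2\in I_g(p^-,p^+)$ lies in $J_g(q_1,q_2)\subset I_g(p^-,p^+)$, so cut points further along are irrelevant, and $C(q)$ is the full flowout, not truncated at cut points. The genericity fallback proves nothing for a fixed metric, and without such a hypothesis the lemma genuinely fails, e.g.\ if all null geodesics from $q_1$ refocus at $q_2$.
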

\begin{proof}
If $\eta_0\in C(q_1)\cap C(q_2)$, then $\eta_0$ must be on the null bicharacteristic through both $q_1$ and $q_2$.
\end{proof}
  
  Therefore if $\emptyset\neq\mathrm{CP}(\eta_1,\eta_2,\eta_3)\subset E_V(q)$ for some $q$, then $q$ is uniquely determined since $\pi(\mathrm{CP}(\eta_1,\eta_2,\eta_3))$ has Hausdorff dimension $2$. If $\sigma(\rho_{123},\mathbf{v}_{123})(\eta_0)$ is nonvanishing for some $\eta_0\in\mathrm{CP}(\eta_1,\eta_2,\eta_3)$, by successively fixing two of $\{\eta_j\}_{j=1}^3$ and perturbing the third one, one can recover a conical piece $U$ of $E_V(q)$ such that $\pi(U)$ has Hausdorff dimension $3$ (Here $\eta_1,\eta_2,\eta_3$ are not necessarily close). For each $q\in I_g(p^-,p^+)$, there exists some $(\eta_1,\eta_2,\eta_3)$, which can be continuously varied in this way so that we can recover the whole $E_V(q)$ by above discussion. The incomplete pieces can be incorporated into the complete ones thanks to Lemma \ref{Cq1q2}.
  
  Therefore the source-to-solution map determines the earliest light observation sets
  \[
 \{ \mathcal{E}_V(q)\vert q\in I_g(p_-,p_+)\}=\{\pi(E_V(q))\vert q\in I_g(p_-,p_+)\}.
  \]
  Then by \cite[Theorem 1.2]{kurylev2018inverse}, we can recover the conformal structure of the Lorentzian metric
  \[
 \left( \begin{array}{cc}
 -1+c_0^{-2}\mathbf{v}_0^T\mathbf{v}_0&-c_0^{-2}\mathbf{v}_0^T\\
 -c_0^{-2}\mathbf{v}_0 &c_0^{-2}I
  \end{array}\right)
  \]
  in the set $I_g(p^-,p^+)$. To be more precise, denote $L_V^{(j)}$, $j=1,2$, to be the source-to-solution map associated with $(\rho_0^{(j)},\mathbf{v}_0^{(j)})$. The conclusion of this section is summarized in the following theorem.
  \begin{theorem}\label{thm: determination of conformal class}
  If $L_V^{(1)}=L_V^{(2)}$, then there exists a diffeomorphism $\Phi$ and a scalar function $\lambda\in C^\infty(I_{g^{(1)}}(p^-,p^+))$ such that
  \[
  \Phi(I_{g^{(1)}}(p^-,p^+))=I_{g^{(2)}}(p^-,p^+),
  \]
 where $\Phi=\mathrm{Id}$, $\lambda=0$ on $V$ and 
  \[
  g^{(1)}=e^\lambda \Phi^*g^{(2)}\quad\text{in }I_{g^{(1)}}(p^-,p^+).
  \]
  Here
   \[
 g^{(j)}=\left( \begin{array}{cc}
 -1+(c^{(j)}_0)^{-2}(\mathbf{v}^{(j)}_0)^T\mathbf{v}_0^{(j)}&-(c^{(j)}_0)^{-2}(\mathbf{v}^{(j)}_0)^T\\
 -(c^{(j)}_0)^{-2}\mathbf{v}^{(j)}_0 &(c^{(j)}_0)^{-2}I
  \end{array}\right).
  \]
  \end{theorem}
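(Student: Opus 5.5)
The plan is to reduce the theorem to the determination of the earliest light observation sets and then apply \cite[Theorem 1.2]{kurylev2018inverse}. The data-to-geometry step uses the third-order linearization of Section \ref{chmultilinearization}. Since $L_V^{(1)}=L_V^{(2)}$ and $(\rho_0,\mathbf{v}_0)|_V$ is known, differentiating $L_V^{(j)}(\epsilon_1\mathbf{f}_1+\epsilon_2\mathbf{f}_2+\epsilon_3\mathbf{f}_3)$ three times at $\vec\epsilon=0$ shows that $(\rho^{(1)}_{123},\mathbf{v}^{(1)}_{123})|_V=(\rho^{(2)}_{123},\mathbf{v}^{(2)}_{123})|_V$ for every admissible triple of sources. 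The differentiability in $\vec\epsilon$ comes from Proposition \ref{localwellposedness} and the contraction argument. The sources $\mathbf{f}_i\in\mathcal{I}^{\mu+1/2}(\Sigma(\zeta_i,s_0))$ are built in $V$ from $g|_V$, which is known. Hence the family of singular supports of $(\rho_{123},\mathbf{v}_{123})|_V$, together with the wavefront sets and principal symbols there, is the same for both backgrounds, as a function of $(\eta_1,\eta_2,\eta_3)\in(L^{*,+}V)^3$.

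Next I recover the three-to-one scattering relation on $V$. I declare $(\eta_0,\eta_1,\eta_2,\eta_3)\in\mathcal{R}$ when, as $s_0\to0$, $\eta_0$ lies in the limit of $\mathrm{WF}(\rho_{123},\mathbf{v}_{123})|_V$ outside $\bigcup_i\Lambda_i$ and outside the exceptional set of Hausdorff dimension $\le1$. This is then checked to be a genuine three-to-one scattering relation. Condition (1) follows from the propagation analysis of Section \ref{nonlinearinteraction}: new singularities in $V$ can only come from $\Lambda_{123}^g$, since $V$ avoids the projections of the $\tau$-flowouts and of $\Lambda^{(3)}$. Proposition \ref{hausdorffthreewaves} rules out contributions when the three geodesics do not meet. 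Condition (2) follows from Proposition \ref{nonvanishing}, which gives generic nonvanishing of the principal symbol on $\mathrm{CP}(\eta_1,\eta_2,\eta_3)$. The word ``generically'' is the delicate point. The relation only needs to contain the pairs where the symbol is nonzero on an open dense subset of the conical piece, and closedness of wavefront sets fills in the rest. Consequently $\mathcal{R}^{(1)}=\mathcal{R}^{(2)}$, and the conical pieces $\mathrm{CP}$ agree.

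From $\mathcal{R}$ I then read off $\{\mathcal{E}_V(q):q\in I_g(p_-,p_+)\}$. Fix $\eta_1$. Proposition \ref{prop4} identifies pairs $\eta_0^{(1)},\eta_0^{(2)}\in E_V(q)$ through continuous families $(\eta_2(\varpi),\eta_3(\varpi))$ with nonempty conical pieces. The piecewise-constant refinement, which moves only one of $\eta_2,\eta_3$ at a time, makes this condition expressible purely in terms of $\mathcal{R}$. Lemma \ref{Cq1q2} shows that each conical piece, whose projection has Hausdorff dimension $2$, determines $q$ uniquely, and that pieces from different $q$ cannot be confused. Gluing these pieces gives $\pi(E_V(q))$ for each $q\in I_g(p_-,p_+)$, and identical data yield identical collections. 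The no-cut-point assumption ensures that $E_V(q)$ is exactly the flowout and that the $\Lambda_i$ are conormal bundles.

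Finally I invoke \cite[Theorem 1.2]{kurylev2018inverse}. Equal earliest light observation sets, with $g^{(1)}=g^{(2)}$ on $V$, give a diffeomorphism $\Phi:I_{g^{(1)}}(p^-,p^+)\to I_{g^{(2)}}(p^-,p^+)$ and a conformal factor. Both $\Phi$ and the factor are identities on $V$, since the construction identifies points $q$ with their observation sets and points of $V$ are observed directly. I expect the main obstacle to be the second step. The difficulty is showing rigorously that singularities observed in $V$ characterize $\mathcal{R}$ exactly, with no spurious contributions from $\Lambda^{(3),g}$ or $\Lambda_{ij}^\tau$ and no loss from the merely generic nonvanishing. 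There I would rely on the dimension counts of Proposition \ref{hausdorffthreewaves} and on analyticity in the angle parameter from Proposition \ref{nonvanishing}.
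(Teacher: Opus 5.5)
Your proposal follows the paper's route essentially step for step: third-order linearization, singularities of $(\rho_{123},\mathbf{v}_{123})$ in $V$ separated from the dimension-$\le 1$ exceptional set via Proposition \ref{hausdorffthreewaves}, Proposition \ref{prop4} with the one-at-a-time refinement and Lemma \ref{Cq1q2} to assemble $\mathcal{E}_V(q)$, and then \cite[Theorem 1.2]{kurylev2018inverse}. One overstatement should be dropped: Proposition \ref{nonvanishing} only supplies nonvanishing for \emph{some} $\eta_2,\eta_3$ given $\eta_0,\eta_1$, so your data-defined relation need not contain every quadruple satisfying (a)--(c); this is harmless because, as in the paper, you only use the good families provided by Proposition \ref{prop4}.
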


\section{Advective flow from two acoustic waves}\label{chadvective}
In this section, we show that if the background solution has nonzero vorticity, then the interaction of two acoustic waves generate an advective flow in the generic setting. This will be used later to determine the exact background solution.

\subsection{Principal symbol computation}
We divide the multilinearized momentum equation by $\rho_0$ and isolate the right-hand side source vector $\mathbf{f}_{ij}$. After simplification, the second-order system \eqref{momentumij} and \eqref{continuityij} governing the interaction between primary waves $i$ and $j$ is given by
\[
D_t \rho_{ij} + \rho_0 \nabla \cdot \mathbf{v}_{ij} + \mathbf{v}_{ij}\cdot \nabla \rho_0 + \rho_{ij}\nabla \cdot \mathbf{v}_0 = h_{ij},
\]
\[
D_t \mathbf{v}_{ij} + \mathbf{v}_{ij}\cdot \nabla \mathbf{v}_0 + \frac{c_0^2}{\rho_0}\nabla \rho_{ij}
+ \frac{c_0^2}{\rho_0^2}(\gamma-2)\nabla \rho_0 \rho_{ij}
= \mathbf{f}_{ij},
\]
where we recall that $D_t = \partial_t + \mathbf{v}_0 \cdot \nabla$.
The source terms $\mathbf{f}_{ij}$ and $h_{ij}$ on the right-hand sides are:
\begin{align*}
h_{ij}
&= -\rho_i \nabla \cdot \mathbf{v}_j - \mathbf{v}_j\cdot \nabla \rho_i
-\rho_j \nabla \cdot \mathbf{v}_i - \mathbf{v}_i\cdot \nabla \rho_j,\\
\mathbf{f}_{ij}
&= -\mathbf{v}_i\cdot \nabla \mathbf{v}_j - \mathbf{v}_j\cdot \nabla \mathbf{v}_i
- A\gamma(\gamma-2)\rho_0^{\gamma-3}\nabla(\rho_i\rho_j)
- A\gamma(\gamma-2)(\gamma-3)\rho_0^{\gamma-4}\nabla \rho_0\, \rho_i\rho_j.
\end{align*}
We can rewrite the equation into
\begin{equation}\label{eq: rho_ij simplified}
    D_t \rho_{ij} + \rho_0 \nabla \cdot \mathbf{v}_{ij} + \mathbf{v}_{ij}\cdot \nabla \rho_0 + \rho_{ij}\nabla \cdot \mathbf{v}_0 = -\nabla \cdot (\rho_i \mathbf{v}_j + \rho_j \mathbf{v}_i),
\end{equation}
and
\begin{equation}\label{eq: v_ij simplified}
    \begin{split}
        &D_t \mathbf{v}_{ij} + \mathbf{v}_{ij}\cdot \nabla \mathbf{v}_0 + \nabla\left(\frac{c_0^2}{\rho_0} \rho_{ij}\right)\\
        &= -\nabla\left(\mathbf{v}_i \cdot \mathbf{v}_j + (\gamma - 2)\frac{c_0^2}{\rho_0^2}\rho_i \rho_j\right) + \mathbf{v}_i \times (\nabla \times \mathbf{v}_j) + \mathbf{v}_j \times (\nabla \times \mathbf{v}_i).
    \end{split}
\end{equation}

Assume that $\gamma_i$ and $\gamma_j$ intersect at some point $q$. Recall the computations for linearized  equation in Section \ref{sec:linearizedeq}. The principal symbol of $(\rho_k,\mathbf{v}_k)$ on $\Lambda_k$ should be proportional to the eigenvector. Assume $(\rho_i, \mathbf{v}_i)$ corresponds to ``positive" acoustic wave, meaning $\tau_i + c_0|\xi_i| = 0$, and
\[
\sigma(\rho_i)(\tau_i, \xi_i) = \alpha_i c_0|\xi_i|,\quad \sigma(\mathbf{v}_i)(\tau_i, \xi_i) = \alpha_i \frac{c_0^2}{\rho_0}\xi_i;
\]
and assume $(\rho_j, \mathbf{v}_j)$ corresponds to ``negative" acoustic wave, meaning $\tau_j - c_0|\xi_j| = 0$, and
\[
 \sigma(\rho_j)(\tau_j, \xi_j) = -\alpha_j c_0|\xi_j|,\quad \sigma(\mathbf{v}_j)(\tau_j, \xi_j) = \alpha_j \frac{c_0^2}{\rho_0}\xi_j.
\]
Denote $\xi = \xi_i + \xi_j$ and $\tau = \tau_i + \tau_j$, advective singularity occurs at $\tau = \tau_i+\tau_j = 0$. This requires $|\xi_i| = |\xi_j|$. We shall denote $\sigma(\mathbf{v}_{ij})$ as its principal symbol on $\Lambda_{ij} \backslash \Lambda_{ij}^\tau$, and $\sigma^1(\mathbf{v}_{ij})$ as its principal symbol on $\Lambda_{ij}^\tau \backslash \Lambda_{ij}$, recall the transmission condition from Remark 2.7 and Section 3 of \cite{melrose1979lagrangian}:
\[
\mathrm{i}\tau \sigma(\mathbf{v}_{ij}) = \sigma^1(\mathbf{v}_{ij}) \qquad \text{on} \quad \Lambda_{ij} \cap \Lambda_{ij}^\tau.
\]
Finally, we assume the acoustic waves have classical symbols (this can be achieved by constructing the source terms to have classical symbols), and denote $\sigma_\ell$ the homogeneous of degree $\ell$ terms in the symbol. Let $\mu_k + 1/2$ be the principal order for $(\rho_k, \mathbf{v}_k)$ (so that $(\rho_k, \mathbf{v}_k) \in \mathcal{I}^{\mu_k}(\Lambda_k)$), and denote $\mu = \mu_i + \mu_j +1$.

From the end of Section \ref{sec: two waves}, we know $(\rho_{ij}, \mathbf{v}_{ij}) \in \mathcal{I}^\mu(\Lambda_{ij}\backslash \Lambda_{ij}^\tau)$.
Consider the principal symbol on $\Lambda_{ij} \backslash \Lambda_{ij}^\tau$ from the continuity equation \eqref{eq: rho_ij simplified}:
\begin{align*}
    \tau \sigma_\mu(\rho_{ij}) + \rho_0 \xi \cdot \sigma_\mu(\mathbf{v}_{ij}) &= - \xi \cdot (\sigma_{\mu_i+1/2}(\rho_i)\sigma_{\mu_j+1/2}(\mathbf{v}_j) + \sigma_{\mu_j+1/2}(\rho_j)\sigma_{\mu_i+1/2}(\mathbf{v}_i))\\
    &= -(\xi_i + \xi_j) \cdot \left[\alpha_i\alpha_j\frac{c_0^3}{\rho_0}(|\xi_i|\xi_j - |\xi_j|\xi_i)\right].
\end{align*}
In particular, if we take limit of the RHS approaching $\Lambda_{ij}\cap \Lambda_{ij}^\tau$, meaning we rescale $\xi_i$ and $\xi_j$ such that $|\xi_i| - |\xi_j| \to 0$ and so $\tau = \tau_i + \tau_j \to 0$, we obtain
\begin{align*}
    \rho_0 \xi \cdot \sigma_\mu(\mathbf{v}_{ij}) &= -(\xi_i + \xi_j) \cdot \left[\alpha_i\alpha_j\frac{c_0^3}{\rho_0}|\xi_i|(\xi_j - \xi_i)\right]\\
    &=\alpha_i\alpha_j\frac{c_0^3}{\rho_0}|\xi_i|(|\xi_i|^2-|\xi_j|^2)\\
    &=0.
\end{align*}
The above computation implies the longitudinal part of $\sigma_\mu(\mathbf{v}_{ij})$, namely $\xi \cdot \sigma_\mu(\mathbf{v}_{ij})$, must vanish as it approaches the intersection. In other words, we have
\[
\mathrm{i}\tau \sigma_\mu(\mathbf{v}_{ij}) \cdot \xi = 0 \qquad \text{on} \quad \Lambda_{ij} \cap \Lambda_{ij}^\tau.
\]

Now consider the principal symbol of the momentum equation \eqref{eq: v_ij simplified} on $\Lambda_{ij} \backslash \Lambda_{ij}^\tau$:
\[
\mathrm{i}\tau \sigma_\mu(\mathbf{v}_{ij}) + \mathrm{i}\frac{c_0^2}{\rho_0}\sigma_\mu(\rho_{ij})\xi  = -\mathrm{i}\frac{c_0^2}{\rho_0}\sigma_\mu(B)\xi + \sigma_{\mu+1}(\mathbf{v}_i \times (\nabla \times \mathbf{v}_j) + \mathbf{v}_j \times (\nabla \times \mathbf{v}_i)),
\]
where $B = \frac{\rho_0}{c_0^2}\mathbf{v}_i \cdot \mathbf{v}_j + (\gamma-2)\rho_0^{-1}\rho_i\rho_j$.
Note that $\sigma_{\mu_k+1/2+1}(\nabla \times \mathbf{v}_k) = 0$ because $\sigma_{\mu_k+1/2}(\mathbf{v}_k)$ is proportional to $\xi_k$. Use the result that $\mathrm{i}\tau\sigma_\mu(\mathbf{v}_{ij})$ is orthogonal to $\xi$ on $\Lambda_{ij}^\tau \cap \Lambda_{ij}$, we have
\[
\mathrm{i}\tau \sigma_\mu(\mathbf{v}_{ij}) = 0, \qquad \sigma_\mu(\rho_{ij}) = -\sigma_\mu(B) \qquad \text{on} \quad \Lambda_{ij}^\tau \cap \Lambda_{ij}.
\]
In particular, this suggests that we need to look at one degree lower. For the momentum equation \eqref{eq: v_ij simplified}, we gather terms of order $\mu$:
\begin{align*}
    &\mathrm{i}\tau \sigma_{\mu-1}(\mathbf{v}_{ij}) + \mathrm{i}\frac{c_0^2}{\rho_0}\sigma_{\mu-1}(\rho_{ij})\xi +\nabla\left(\frac{c_0^2}{\rho_0}\right)\sigma_\mu(\rho_{ij}) \\
    &= -\mathrm{i}\frac{c_0^2}{\rho_0}\sigma_{\mu-1}(B)\xi -\nabla\left(\frac{c_0^2}{\rho_0}\right)\sigma_{\mu}(B) + \sigma_{\mu}(\mathbf{v}_i \times (\nabla \times \mathbf{v}_j) + \mathbf{v}_j \times (\nabla \times \mathbf{v}_i)).
\end{align*}
On $\Lambda_{ij}^\tau \cap 
\Lambda_{ij}$, use the fact that $\sigma_{\mu}(\rho_{ij}) = -\sigma_\mu(B)$ and take projection $\Pi_\xi$ onto the orthogonal complement of $\xi$:
\[
\mathrm{i}\tau \Pi_\xi\sigma_{\mu-1}(\mathbf{v}_{ij}) = \Pi_\xi \sigma_{\mu}(\mathbf{v}_i \times (\nabla \times \mathbf{v}_j) + \mathbf{v}_j \times (\nabla \times \mathbf{v}_i)).
\]

\subsection{WKB approximation}
To evaluate the singularity for this symmetric curl term, we use WKB approximation. Specifically, consider the WKB expansions of the primary acoustic waves into the source terms:
\[
\rho_k \sim e^{i\omega \phi_k}\bigl(a_k^{(0)} + \omega^{-1} a_k^{(1)} + \cdots \bigr),
\]
\[
\mathbf{v}_k \sim e^{i\omega \phi_k}\bigl(\mathbf{b}_k^{(0)} + \omega^{-1} \mathbf{b}_k^{(1)} + \cdots \bigr).
\]
From the first order linearization, the principal amplitudes are
\[
a_k^{(0)} = -\alpha_k D_t\phi_k\quad\text{and}\quad\mathbf{b}_k^{(0)} = \frac{c_0^2}{\rho_0}\alpha_k \nabla \phi_k,
\]
where $\alpha^k$ is the scalar transport amplitude.

If we substitute them into the expression, the derivative needs to hit the phase function to generate $O(\omega)$ term, which gives
\[
\mathbf{b}_i^{(0)} \times (\nabla \phi_j \times \mathbf{b}_j^{(0)}) + \mathbf{b}_j^{(0)} \times (\nabla \phi_i \times \mathbf{b}_i^{(0)}).
\]
However, note that $\mathbf{b}_k^{(0)}$ is proportional to $\nabla \phi_k$, so the $O(\omega)$ terms cancel out, matching the principal symbol computation previously.

We now start to evaluate the sub-principal contribution of the cross products. Expanding $\nabla \times \mathbf{v}_j$ and collecting $\mathcal{O}(\omega^0)$ terms yield:
\[
\nabla \times \mathbf{b}_j^{(0)} + \mathrm{i} \nabla\phi_j \times \mathbf{b}_j^{(1)}.
\]

To resolve $\mathbf{b}_j^{(1)}$, recall \eqref{transporteqb0}:
\[
D_t \mathbf{b}_j^{(0)} + \mathbf{b}_j^{(0)} \cdot \nabla \mathbf{v}_0 + \nabla \left( \frac{c_0^2}{\rho_0} a_0^j \right) + \mathrm{i} \mathbf{b}_j^{(1)} D_t \phi_j + \mathrm{i} a_j^{(1)} \frac{c_0^2}{\rho_0} \nabla\phi_j = 0.
\]
We isolate $\mathrm{i} \nabla\phi_j \times \mathbf{b}_j^{(1)}$ by taking the cross product with $\frac{-1}{D_t \phi_j} \nabla\phi_j$. This operation conveniently annihilates the $a_1^j$ term:
\[
\mathrm{i} \nabla\phi_j \times \mathbf{b}_j^{(1)} = \frac{-1}{D_t \phi_j} \nabla\phi_j \times [D_t(X_j \nabla\phi_j) + X_j \nabla\phi_j \cdot \nabla \mathbf{v}_0 - \nabla(X_j D_t \phi_j)],
\]
where we denote $X_k = \frac{c_0^2}{\rho_0}\alpha^k$ for notation simplicity.

We apply the identity $D_t \nabla\phi_j + \nabla\phi_j \cdot \nabla \mathbf{v}_0 = \nabla D_t \phi_j - \nabla\phi_j \times (\nabla \times \mathbf{v}_0)$ to the first two terms in the bracket:
\[
D_t(X_j \nabla\phi_j) + X_j \nabla\phi_j \cdot \nabla \mathbf{v}_0 = D_t X_j \nabla\phi_j + X_j \nabla D_t \phi_j - X_j \nabla\phi_j \times (\nabla \times \mathbf{v}_0).
\]
Subtracting the third term $\nabla(X_j D_t \phi_j) = \nabla X_j D_t \phi_j + X_j \nabla D_t \phi_j$, the bracket simplifies to
\[
D_t X_j \nabla\phi_j - \nabla X_j D_t \phi_j - X_j \nabla\phi_j \times (\nabla \times \mathbf{v}_0).
\]
Taking cross product of this and $\frac{-1}{D_t \phi_j} \nabla\phi_j$, the parallel term $D_t X_j \nabla\phi_j$ vanishes. We obtain
\[
\mathrm{i} \nabla\phi_j \times \mathbf{b}_j^{(1)} = \nabla\phi_j \times \nabla X_j + \frac{X_j}{D_t \phi_j} \nabla\phi_j \times (\nabla\phi_j \times (\nabla \times \mathbf{v}_0)).
\]
We add the remaining curl term $\nabla \times \mathbf{b}_j^{(0)} = \nabla \times (X_j \nabla\phi_j) = \nabla X_j \times \nabla\phi_j = -\nabla\phi_j \times \nabla X_j$. This perfectly cancels the first term, leaving the $\mathcal{O}(\omega^0)$ terms for $\nabla \times \mathbf{v}_j$ to be simply:
\[
\frac{X_j}{D_t \phi_j} \nabla\phi_j \times (\nabla\phi_j \times (\nabla \times \mathbf{v}_0)).
\]

The interaction $\mathbf{v}_i \times (\nabla \times \mathbf{v}_j)$ at $\mathcal{O}(\omega^0)$ level is thus the cross product of $\mathbf{b}_i^{(0)}$ and the above term:
\[
\frac{X_i X_j}{D_t \phi_j} \nabla\phi_i \times \left[ \nabla\phi_j \times (\nabla\phi_j \times (\nabla \times \mathbf{v}_0)) \right].
\]
Applying the vector triple product $A \times (B \times M) = B(A \cdot M) - M(A \cdot B)$ with $B = \nabla\phi_j$, the first resulting term is proportional to $\nabla\phi_j$. We thus obtain the $\mathcal{O}(\omega^0)$ term for $\mathbf{v}_i \times (\nabla \times \mathbf{v}_j)$ is
\[
\frac{X_i X_j}{D_t \phi_j} \nabla\phi_j \left[ \nabla\phi_i \cdot (\nabla\phi_j \times (\nabla \times \mathbf{v}_0)) \right]
-\frac{X_i X_j}{D_t \phi_j} (\nabla\phi_i \cdot \nabla\phi_j) \left[ \nabla\phi_j \times (\nabla \times \mathbf{v}_0) \right].
\]
Use now $A \cdot (B \times C) = (A \times B) \cdot C$ we obtain
\[
\frac{X_i X_j}{D_t \phi_j} \nabla\phi_j \left[ (\nabla\phi_i \times \nabla\phi_j) \cdot (\nabla \times \mathbf{v}_0)) \right]
-\frac{X_i X_j}{D_t \phi_j} (\nabla\phi_i \cdot \nabla\phi_j) \left[ \nabla\phi_j \times (\nabla \times \mathbf{v}_0) \right].
\]
By symmetry, the $\mathcal{O}(\omega^0)$ term for $\mathbf{v}_j \times (\nabla \times \mathbf{v}_i)$ is

\[
\frac{X_i X_j}{D_t \phi_i} \nabla\phi_i \left[ (\nabla\phi_j \times \nabla\phi_i) \cdot (\nabla \times \mathbf{v}_0) \right] -\frac{X_i X_j}{D_t \phi_i} (\nabla\phi_i \cdot \nabla\phi_j) \left[ \nabla\phi_i \times (\nabla \times \mathbf{v}_0) \right].
\]
Summing these we obtain
\begin{align*}
    &-X_iX_j \left(\frac{\nabla\phi_i}{D_t\phi_i} - \frac{\nabla\phi_j}{D_t\phi_j}\right) \left[ (\nabla\phi_i \times \nabla\phi_j) \cdot (\nabla \times \mathbf{v}_0) \right]\\
    &-X_iX_j (\nabla\phi_i \cdot \nabla\phi_j) \left[ \left(\frac{\nabla\phi_i}{D_t\phi_i} + \frac{\nabla\phi_j}{D_t\phi_j}\right) \times (\nabla \times \mathbf{v}_0) \right].
\end{align*}

Since $\Lambda_{ij}^\tau$ corresponds to $\tau = 0$, in order for two acoustic waves to generate an advective wave, we need $\tau_i + \tau_j = 0$. This implies $|\xi_i| = |\xi_j|$, namely $D_t\phi_i = -D_t\phi_j$. The WKB analysis concludes that (ignoring non-vanishing half-densities and Maslov contributions)
\[
\sigma_\mu(\mathbf{v}_i \times (\nabla \times \mathbf{v}_j) + \mathbf{v}_j \times (\nabla \times \mathbf{v}_i)) = C \xi + \frac{1}{c_0}(\sigma_{\mu_i+1/2}(\mathbf{v}_i) \cdot \sigma_{\mu_j+1/2}(\mathbf{v}_j))\left(\frac{\xi_i}{|\xi_i|}-\frac{\xi_j}{|\xi_j|}\right) \times (\nabla \times \mathbf{v}_0).
\]

\subsection{Propagation of singularity on $\Lambda_{ij}^\tau$}
Now we patch everything together. The WKB analysis as well as the symbol computation for momentum equation tells us that, on the intersection $\Lambda_{ij} \cap \Lambda_{ij}^\tau$:
\begin{align*}
    \Pi_\xi\sigma^1(\mathbf{v}_{ij}) &= \Pi_\xi(\mathrm{i}\tau \sigma_{\mu-1}(\mathbf{v}_{ij}))\\
    &= \Pi_\xi\sigma_\mu(\mathbf{v}_i \times (\nabla \times \mathbf{v}_j) + \mathbf{v}_j \times (\nabla \times \mathbf{v}_i))\\
    &= \frac{1}{c_0}(\sigma_{\mu_i+1/2}(\mathbf{v}_i) \cdot \sigma_{\mu_j+1/2}(\mathbf{v}_j))\Pi_\xi\left[\left(\frac{\xi_i}{|\xi_i|}-\frac{\xi_j}{|\xi_j|}\right) \times (\nabla \times \mathbf{v}_0)\right],
\end{align*}
where $\xi$ is proportional to $\frac{\xi_i}{|\xi_i|}+\frac{\xi_j}{|\xi_j|}$. In fact, we can see from the RHS that the principal symbol on the intersection is homogeneous of degree $\mu = \mu_i+\mu_j+1$, namely $\sigma^1(\mathbf{v}_{ij}) = \sigma^1_\mu(\mathbf{v}_{ij})$.

This expression is zero if and only if $\left(\frac{\xi_i}{|\xi_i|}-\frac{\xi_j}{|\xi_j|}\right) \times (\nabla \times \mathbf{v}_0) = k\xi$ for some real $k$. In particular, if we take dot product with $\nabla \times \mathbf{v}_0$ both sides, we see a necessary condition for it to be 0 is
\[
0 = k\xi \times (\nabla \times \mathbf{v}_0).
\]
In other words, if we assume the background flow $(\rho_0, \mathbf{v}_0)$ has non-zero curl at the intersection region, then a generic choice of directions $\xi_i/|\xi_i|, \xi_j/|\xi_j|$ would satisfy $\Pi_\xi\sigma^1(\mathbf{v}_{ij})(0, \frac{\xi_i}{|\xi_i| } + \frac{\xi_j}{|\xi_j|}) \neq 0$. Note that $\sigma_{\mu_k}(\mathbf{v}_k)$ is proportional to $\xi_k$, so technically speaking we also need $\xi_i \cdot \xi_j \neq 0$, but as the two waves are arbitrarily close to each other (in other words, $\xi_i$ and $-\xi_j$ are close to each other and never orthogonal),
the dot product will always be nonzero in our case.

Near the projection of $\Lambda_{ij}^\tau \backslash \Lambda_{ij}$ (namely in a neighborhood of $\ell$ away from collision point $p$), the source terms $h_{ij}, \mathbf{f}_{ij}$ are smooth, so the continuity and momentum equations reduce to free transport equations there. Specifically, the principal symbol of the operator on $\Lambda_{ij}^\tau \backslash \Lambda_{ij}$ is
\[
\begin{pmatrix}
    0 & \rho_0\xi^T \\ \frac{c_0^2}{\rho_0}\xi & 0
\end{pmatrix}.
\]
By \cite[Proposition 2.7]{dencker1982polarization}, the polarization set of $(\rho_{ij}, \mathbf{v}_{ij})$ is in the kernel of the matrix, meaning $\rho_{ij}$ is smooth there and $\sigma^1(\mathbf{v}_{ij})$ is always orthogonal to $\xi$. From the momentum equation and Section 4 of \cite{dencker1982polarization}, one obtains the transport equation for $\Pi_\xi\sigma^1(\mathbf{v}_{ij}) = \sigma^1(\mathbf{v}_{ij})$:
\[
H_\tau \sigma^1(\mathbf{v}_{ij}) + \left((\nabla \mathbf{v}_0)^T -\frac{1}{2}\nabla \cdot \mathbf{v}_0\right) \sigma^1(\mathbf{v}_{ij}) = 0 \quad \text{on} \quad \Lambda_{ij}^\tau \backslash \Lambda_{ij}.
\]
As a result, for a generic collision the principal symbol of $\mathbf{v}_{ij}$ is nonzero along the advective flow, when the background flow has non-zero curl at $q$.

The order of the principal symbol, as computed above, is $\mu_i + \mu_j + 1$, meaning the corresponding distribution on $\Lambda_{ij}^\tau \backslash \Lambda_{ij}$ has order $\mu_i+\mu_j+1+1/2-4/4 = \mu_i+\mu_j + 1/2$. This is indeed one degree more regular then the previous estimate that $(\rho_{ij}, \mathbf{v}_{ij}) \in \mathcal{I}^{\mu_i+\mu_j+3/2}(\Lambda_{ij}^\tau \backslash \Lambda_{ij})$ at the end of Section \ref{sec: two waves}. We summarize the result into the following lemma.

\begin{lemma}\label{lem: principal symbol of v_ij}
    Suppose $\nabla \times \mathbf{v}_0 \neq 0$ at $q$. Then for two distinct incident directions $\zeta_i = (\tau_i, \xi_i)\in L^*_qM$ and $\zeta_j = (\tau_j, \xi_j)\in L^*_qM$ such that $|\xi_i| = |\xi_j|$, $\tau_i = -c_0|\xi_i|$ and $\tau_j = c_0|\xi_j|$, the principal symbol (ignoring non-vanishing half-densities and Maslov bundle) of $\mathbf{v}_{ij}$ at $\zeta_{ij} = \zeta_i + \zeta_j\in \Lambda_{ij}^\tau$ is:
    \[
    \sigma^1(\mathbf{v}_{ij})(q, \zeta_{ij}) = \frac{1}{c_0}(\sigma(\mathbf{v}_i)(q, \zeta_i) \cdot \sigma(\mathbf{v}_j)(q, \zeta_j))\Pi_\xi\left[\left(\frac{\xi_i}{|\xi_i|}-\frac{\xi_j}{|\xi_j|}\right) \times (\nabla \times \mathbf{v}_0)\right].
    \]
    In particular, it is nonzero for a generic choice of $\zeta_i$ and $\zeta_j$, and stays nonzero along the Hamiltonian flow $H_\tau$ from $(q, \zeta_{ij})$. If $(\rho_k, \mathbf{v}_k) \in \mathcal{I}^{\mu_k}(\Lambda_k)$, then on $\Lambda_{ij}^\tau \backslash \Lambda_{ij}$, $(\rho_{ij}, \mathbf{v}_{ij}) \in \mathcal{I}^{\mu_i+\mu_j+1/2}(\Lambda_{ij}^\tau \backslash \Lambda_{ij})$.
\end{lemma}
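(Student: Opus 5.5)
The lemma collects the computations of this section, so the proof assembles them in order and checks each ingredient.

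\emph{Step 1: reduction to the incoming symbol of $\mathbf{v}_{ij}$ on $\Lambda_{ij}$.} Work microlocally near $(q,\zeta_{ij})\in\Lambda_{ij}\cap\Lambda^\tau_{ij}$. By the end of Section \ref{sec: two waves}, $(\rho_{ij},\mathbf{v}_{ij})$ is a paired Lagrangian distribution in $\mathcal{I}(\Lambda_{ij},\Lambda^\tau_{ij})$. The transmission condition of \cite{melrose1979lagrangian}, $\sigma^1(\mathbf{v}_{ij})=\mathrm{i}\tau\sigma(\mathbf{v}_{ij})$ on $\Lambda_{ij}\cap\Lambda_{ij}^\tau$, then reduces the claim to computing the limit of $\mathrm{i}\tau\sigma(\mathbf{v}_{ij})$ as $\tau\to0$ along $\Lambda_{ij}\setminus\Lambda^\tau_{ij}$. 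The continuity equation \eqref{eq: rho_ij simplified} forces $\xi\cdot\sigma_\mu(\mathbf{v}_{ij})\to0$ as $|\xi_i|-|\xi_j|\to0$. The momentum equation \eqref{eq: v_ij simplified} at top order, together with $\sigma(\nabla\times\mathbf{v}_k)=0$, then gives $\mathrm{i}\tau\sigma_\mu(\mathbf{v}_{ij})=0$ and $\sigma_\mu(\rho_{ij})=-\sigma_\mu(B)$ on the intersection. Hence the first nonzero contribution sits one order down. Applying $\Pi_\xi$ to the order-$\mu$ part of the momentum equation removes all gradient terms, and we obtain $\Pi_\xi\sigma^1(\mathbf{v}_{ij})=\Pi_\xi\sigma_\mu(\mathbf{v}_i\times(\nabla\times\mathbf{v}_j)+\mathbf{v}_j\times(\nabla\times\mathbf{v}_i))$.

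\emph{Step 2: subprincipal evaluation of the curl source.} The $O(\omega)$ part of this source cancels because $\mathbf{b}_k^{(0)}\parallel\nabla\phi_k$. For the $O(\omega^0)$ part, use the transport equation \eqref{transporteqb0} to eliminate $\mathbf{b}^{(1)}_j$ in $\nabla\times\mathbf{b}^{(0)}_j+\mathrm{i}\nabla\phi_j\times\mathbf{b}^{(1)}_j$. Cross \eqref{transporteqb0} with $\nabla\phi_j$ and apply the identity $D_t\nabla\phi+\nabla\phi\cdot\nabla\mathbf{v}_0=\nabla D_t\phi-\nabla\phi\times(\nabla\times\mathbf{v}_0)$. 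This yields $\frac{X_j}{D_t\phi_j}\nabla\phi_j\times(\nabla\phi_j\times(\nabla\times\mathbf{v}_0))$. Crossing with $\mathbf{b}^{(0)}_i$, symmetrizing in $i,j$, and imposing $D_t\phi_i=-D_t\phi_j$ (which is exactly $\tau=0$) gives the stated formula modulo multiples of $\xi$. Those multiples are killed by $\Pi_\xi$. The remaining term $(\nabla\phi_i\times\nabla\phi_j)\cdot(\nabla\times\mathbf{v}_0)$ multiplies $\frac{\xi_i}{|\xi_i|}+\frac{\xi_j}{|\xi_j|}\parallel\xi$, so it is killed as well. Identifying $X_iX_j\,\nabla\phi_i\cdot\nabla\phi_j$ with $\sigma(\mathbf{v}_i)\cdot\sigma(\mathbf{v}_j)$ and $|\xi_k|$ with $|D_t\phi_k|/c_0$ produces the constant $1/c_0$.

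\emph{Step 3: genericity, propagation, and order.} \textbf{Genericity.} The formula vanishes only if $w\times\omega\in\mathrm{span}(\xi)$, where $w=\hat\xi_i-\hat\xi_j$, $\xi\parallel\hat\xi_i+\hat\xi_j$, and $\omega=\nabla\times\mathbf{v}_0(q)\ne0$. With $\omega$ fixed, this is a proper closed condition on the pair of unit directions, for instance $\xi\perp w$ with $w\times\omega\not\parallel\xi$ generically. The factor $\sigma(\mathbf{v}_i)\cdot\sigma(\mathbf{v}_j)\propto\alpha_i\alpha_j\xi_i\cdot\xi_j$ is nonzero for nearby directions. \textbf{Propagation.} Away from $\Lambda_{ij}$ the sources are smooth. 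Dencker's polarization theory \cite{dencker1982polarization} places $\sigma^1$ in the kernel of the principal symbol, so it is transverse and $\rho_{ij}$ is smooth there. It also gives the linear ODE $H_\tau\sigma^1+((\nabla\mathbf{v}_0)^T-\frac12\nabla\cdot\mathbf{v}_0)\sigma^1=0$, and uniqueness for linear ODEs preserves nonvanishing along the flow. \textbf{Order.} The order follows from the homogeneity $\mu_i+\mu_j+1$ of $\sigma^1$ via the convention that the symbol order equals $m+n/4-N/2$, with $n=4$, $N=1$.

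\emph{Main obstacle.} The delicate step is Step 1. One must justify that the transmission relation may be applied at the subprincipal level $\sigma_{\mu-1}$ after the principal-level cancellation. One must also check that, on $\Lambda_{ij}\cap\Lambda^\tau_{ij}$, the terms $\nabla(c_0^2/\rho_0)\sigma_\mu(\rho_{ij})$ and $-\nabla(c_0^2/\rho_0)\sigma_\mu(B)$ cancel exactly rather than merely modulo $\xi$. I would verify both via the explicit model form of $\mathcal{I}^{p,\ell}$ symbols near $\tau=0$.
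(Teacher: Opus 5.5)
Your proposal follows the paper's own argument essentially step by step: the Melrose--Uhlmann transmission condition, the vanishing of the top-order transverse part that forces you to go down to order $\mu$ and project with $\Pi_\xi$, the WKB evaluation of the subprincipal curl source using the transport equation for $\mathbf{b}^{(1)}_j$, and then Dencker's transport equation along $H_\tau$ together with the order count. The second item in your ``main obstacle'' is already settled by your own Step 1: because $\sigma_\mu(\rho_{ij})=-\sigma_\mu(B)$ on $\Lambda_{ij}\cap\Lambda^\tau_{ij}$, the two $\nabla(c_0^2/\rho_0)$ terms cancel identically there, not merely modulo $\xi$.
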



\subsection{Curl-free background flow}
In this subsection, we show that propagation of singularity does not occur along advective flow $\ell$ if $\nabla \times \mathbf{v}_0$ vanishes in a neighborhood of $\ell$. In fact, when $\mathbf{f} = 0$, divide by $\rho_0$ both sides and take curl of \eqref{eulereq1}, one obtains
\[
D_t\Omega_0 + (\nabla \cdot \mathbf{v}_0)\Omega_0 - (\Omega_0 \cdot \nabla)\mathbf{v}_0 = 0
\]
after simplification, where $\Omega_0 = \nabla \times \mathbf{v}_0$. As a result, if $\nabla \times \mathbf{v}_0 = 0$ at some point $q$, then it stays zero along the advective flow of $q$. Vanishing in a neighborhood of $\ell$ is then equivalent to vanishing near $q$. This is the \textit{Kelvin's Circulation Theorem} (cf., for example, \cite[pp. 475]{taylor2023partial}).

Consider the linearized operator with source $\mathbf{f}$:
\[
\partial_t \mathbf{v} + \mathbf{v}_0 \cdot \nabla \mathbf{v} + \mathbf{v} \cdot \nabla \mathbf{v}_0 + \nabla\left(\frac{c_0^2}{\rho_0}\rho\right) = \mathbf{f}/\rho_0.
\]
Denote $\Omega = \nabla \times \mathbf{v}$, and apply the curl operator to both sides, the LHS becomes
\begin{align*}
    \nabla \times \mathrm{LHS} &= \partial_t \Omega + \nabla \times (\mathbf{v}_0 \cdot \nabla \mathbf{v} + \mathbf{v} \cdot \nabla \mathbf{v}_0)\\
    &= \partial_t\Omega - \nabla \times (\mathbf{v}_0 \times \Omega) - \nabla \times (\mathbf{v} \times \Omega_0)\\
    &= \partial_t\Omega - \Omega \cdot \nabla \mathbf{v}_0 + \mathbf{v}_0 \cdot \nabla \Omega -\mathbf{v}_0 \nabla \cdot \Omega +\Omega \nabla \cdot \mathbf{v}_0  - \nabla \times (\mathbf{v} \times \Omega_0)\\
    &= D_t\Omega - \Omega \cdot \nabla \mathbf{v}_0 + \Omega \nabla \cdot \mathbf{v}_0,
\end{align*}
where we used the fact that $A \cdot \nabla B + B \cdot \nabla A = \nabla(A\cdot B)-A \times (\nabla \times B) - B \times (\nabla \times A)$, and $\Omega_0 = 0$ in a neighborhood of $\ell$. Thus we obtain a transport equation for $\Omega$ along the background advective flows near $\ell$:
\[
D_t \Omega - \Omega\cdot \nabla \mathbf{v}_0 + \Omega \nabla \cdot \mathbf{v}_0 = \nabla \times (\mathbf{f}/\rho_0).
\]

Suppose now that $\ell$ lies completely outside of the observation region $V$. In particular, for first order linearization solution $\mathbf{v}_j$, the source term is $\mathbf{f}_j/\rho_0$ which is 0 near $\ell$. Thus $\Omega_j = \nabla \times \mathbf{v}_j$ solves a free transport equation along advective flows near $\ell$ with zero initial data for $t < 0$, implying that $\Omega_j$ vanishes near $\ell$. Now we consider second order terms. From previous computations, the source term for the momentum equation has the form
\[
-\nabla\left(\mathbf{v}_i \cdot \mathbf{v}_j + (\gamma - 2)\frac{c_0^2}{\rho_0}\rho_i \rho_j\right) + \mathbf{v}_i \times (\nabla \times \mathbf{v}_j) + \mathbf{v}_j \times (\nabla \times \mathbf{v}_i).
\]
Take curl, this simplifies to
\[
\nabla \times (\mathbf{v}_i \times \Omega_j) + \nabla \times (\mathbf{v}_j \times \Omega_i).
\]
Since $\Omega_i, \Omega_j$ vanishes near $\ell$, this vanishes as well. Then $\Omega_{ij} = \nabla \times \mathbf{v}_{ij}$ satisfies again a free transport equation with zero initial data in the past, hence $\Omega_{ij} = 0$ near $\ell$. In particular, use Poincar\'e lemma, $\nabla \times \mathbf{v}_{ij} = 0$ implies $\mathbf{v}_{ij}$ is a total derivative term near $p$, hence the full symbol at any $\xi \in \Lambda_{ij}\cap \Lambda_{ij}^\tau$ is proportional to $\xi$ itself, which vanishes under the orthogonal projection $\Pi_\xi$. This concludes that there is no propagation of singularity along advective flow for the second order terms $(\rho_{ij}, \mathbf{v}_{ij})$.

\section{Interaction of an advection flow and two acoustic waves}\label{cahinterafaw}
Now take
\[
f=\epsilon_1f_1+\epsilon_2f_2+\epsilon_3f_3+\epsilon_4f_4.
\]
Take future pointing lightlike $\eta_1,\eta_2\in L^{*,+}V$ such that $\gamma_{\eta_1}$ and $\gamma_{\eta_2}$ intersect at point $q=\gamma_{\eta_1}(s_1)=\gamma_{\eta_2}(s_2)$ in $I_g(p^-,p^+)$. Construct sources $f_1$ and $f_2$ that generate distorted plane waves $(\rho_1,\mathbf{v}_1)\in\mathcal{I}(\Lambda(\zeta_1,s_0))$, $(\rho_2,\mathbf{v}_2)\in\mathcal{I}(\Lambda(\zeta_2,s_0))$. Assume that they generate advective flow $\mathbf{v}_{12}=\frac{\partial^2}{\partial\epsilon_1\partial\epsilon_2}\mathbf{v}\vert_{\epsilon_1=\epsilon_2=0}$ with conormal singularity in the direction $\zeta_{12}\in\mathrm{span}\{\zeta_1,\zeta_2\}\cap\{\tau=0\}$, where $\zeta_i=\dot{\gamma}_{\eta_i}(s_i)^\flat$. Detailed analysis has been done in the previous section. Take $\eta_3,\eta_4\in L^{*,+}V$ such that $\gamma_{\eta_3}$ and $\gamma_{\eta_4}$ intersect at point $q_1=\gamma_{\eta_3}(s_3)=\gamma_{\eta_4}(s_4)$ in $I_g(p^-,p^+)$, where $q_1$ is very close to $q$. Construct distorted plane waves $(\rho_3,\mathbf{v}_3)\in\mathcal{I}^\mu(\Lambda(\zeta_3,s_0)),\,(\rho_4,\mathbf{v}_4)\in\mathcal{I}^\mu((\zeta_4,s_0))$. Assume $q_1\in\gamma_{\zeta_{12}}(\mathbb{R}^+)$. Then $\gamma_{\eta_i}\cap\gamma_{\eta_j}=\emptyset$ for $i=1,2,\,j=3,4$ under the no cut points assumption. We will analyze the waves generated by the interaction of $\mathbf{v}_{12}$ and $(\rho_3,\mathbf{v}_3)$, $(\rho_4,\mathbf{v}_4)$. See Figure \ref{fig: determine advective flow}.\\

Denote
\[
(\rho_{1234},\mathbf{v}_{1234})=\frac{\partial^4}{\partial\epsilon_1\partial\epsilon_2\partial\epsilon_3\partial\epsilon_4}(\rho,\mathbf{v})\Bigg\vert_{\epsilon_1=\epsilon_2=\epsilon_3=\epsilon_4=0}.
\]
Then $(\rho_{1234},\mathbf{v}_{1234})$ satisfies the following equations.
\begin{align}\label{h1234}
&\partial_t\rho_{1234}+\rho_0\nabla\cdot\mathbf{v}_{1234}+\mathbf{v}_{1234}\cdot\nabla\rho_0+\rho_{1234}\nabla\cdot\mathbf{v}_0+\mathbf{v}_0\cdot\nabla\rho_{1234}\nonumber\\
=&-\nabla\cdot\left(\rho_{\sigma(3)}\mathbf{v}_{12\sigma(4)}+\rho_{12\sigma(3)}\mathbf{v}_{\sigma(4)}\right)+\mathrm{l.o.t.}\\
=&:h_{1234}\nonumber,
\end{align}
and
\begin{align}\label{f1234}
&\rho_0\frac{\partial\mathbf{v}_{1234}}{\partial t}+\rho_0\mathbf{v}_0\cdot\nabla\mathbf{v}_{1234}+\rho_0\mathbf{v}_{1234}\cdot\nabla \mathbf{v}_0+A\gamma\rho_0^{\gamma-1}\nabla\rho_{1234}+A\gamma(\gamma-2)\rho_0^{\gamma-2}\nabla\rho_0\rho_{1234}\nonumber\\
=&-\rho_0\mathbf{v}_{12\sigma(3)}\cdot\nabla\mathbf{v}_{\sigma(4)}-\frac{1}{2}\rho_0\mathbf{v}_{\sigma(3)\sigma(4)}\cdot\nabla\mathbf{v}_{12}-\rho_0\mathbf{v}_{\sigma(3)}\cdot\nabla\mathbf{v}_{12\sigma(4)}-\frac{1}{2}\rho_0\mathbf{v}_{12}\cdot\nabla\mathbf{v}_{\sigma(3)\sigma(4)}\nonumber\\
&-A\gamma(\gamma-2)\rho_0^{\gamma-2}(\nabla\rho_{\sigma(3)}\rho_{12\sigma(4)}+\rho_{\sigma(3)}\nabla \rho_{12\sigma(4)})+\mathrm{l.o.t.}\\
=&:\mathbf{f}_{1234}\nonumber,
\end{align}
where $\sigma$ is a permutation of $3$ and $4$. We choose $s_0>0$ sufficiently small such that
\[
K_i\cap K_j=\emptyset,\,i=1,2,\, j=3,4.
\]
Then $K_i\cap K_{34}^\tau=\emptyset$, $i=1,2$ since $q<q_1$. 
Here
\[
\begin{split}
&\frac{\partial\rho_{12j}}{\partial t}+\rho_0\nabla\cdot\mathbf{v}_{12j}+\mathbf{v}_{12j}\cdot\nabla\rho_0+\rho_{12j}\nabla\cdot\mathbf{v}_0+\mathbf{v}_0\cdot\nabla\rho_{12j}\\
=&-\rho_{12}\nabla\cdot\mathbf{v}_j-\mathbf{v}_j\cdot\nabla\rho_{12}-\rho_j\nabla\cdot\mathbf{v}_{12}-\mathbf{v}_{12}\cdot\nabla\rho_j,
\end{split}
\]
\[
\begin{split}
&\rho_0\frac{\partial\mathbf{v}_{12j}}{\partial t}+\rho_0\mathbf{v}_0\cdot\nabla\mathbf{v}_{12j}+\rho_0\mathbf{v}_{12j}\cdot\nabla\mathbf{v}_0+A\gamma\rho_0^{\gamma-1}\nabla\rho_{12j}+A\gamma(\gamma-2)\nabla\rho_0\rho_{12j}\\
=&-\rho_0\mathbf{v}_{12}\cdot\nabla\mathbf{v}_j-\rho_0\mathbf{v}_j\cdot\nabla\mathbf{v}_{12}-A\gamma(\gamma-2)\nabla(\rho_0^{\gamma-2}\rho_{12}\rho_j),
\end{split}
\]
for $j=3,4$, and
\[
\begin{split}
&\frac{\partial\rho_{34}}{\partial t}+\rho_0\nabla\cdot\mathbf{v}_{34}+\mathbf{v}_{34}\cdot\nabla\rho_0+\rho_{34}\nabla\cdot\mathbf{v}_0+\mathbf{v}_0\cdot\nabla\rho_{34}\\
=&-\rho_{3}\nabla\cdot\mathbf{v}_4-\mathbf{v}_4\cdot\nabla\rho_{3}-\rho_4\nabla\cdot\mathbf{v}_{3}-\mathbf{v}_{3}\cdot\nabla\rho_4,
\end{split}
\]
\[
\begin{split}
&\rho_0\frac{\partial\mathbf{v}_{34}}{\partial t}+\rho_0\mathbf{v}_0\cdot\nabla\mathbf{v}_{34}+\rho_0\mathbf{v}_{34}\cdot\nabla\mathbf{v}_0+A\gamma\rho_0^{\gamma-1}\nabla\rho_{34}+A\gamma(\gamma-2)\nabla\rho_0\rho_{34}\\
=&-\rho_0\mathbf{v}_{3}\cdot\nabla\mathbf{v}_4-\rho_0\mathbf{v}_4\cdot\nabla\mathbf{v}_3-A\gamma(\gamma-2)\nabla(\rho_0^{\gamma-2}\rho_3\rho_4).
\end{split}
\]
Away from $\Lambda_{12}\cup \Lambda_1\cup\Lambda_2$
\[
\mathbf{v}_{12}\in\mathcal{I}(K_{12}^\tau),
\]
away from $\Lambda_{12}^\tau\cup\Lambda_1\cup\Lambda_2\cup \Lambda_{12}\cup \Lambda_j$
\[
(\rho_{12j},\mathbf{v}_{12j})\in \mathcal{I}(K_{12}^\tau\cap K_j),
\]
away from $\Lambda_3\cup \Lambda_4\cup \Lambda_{34}^\tau$
\[
(\rho_{34},\mathbf{v}_{34})\in \mathcal{I}(K_3\cap K_4).
\]
Finally, away from $\bigcup_{j=1}^4\Lambda_j\cup\Lambda_{12}\cup\Lambda_{12}^\tau\cup\Lambda_{34}\cup\Lambda_{34}^\tau\cup N^*(K_{12}^\tau\cap K_3)\cup N^*(K_{12}^\tau\cap K_4)$,
\[
(h_{1234},\mathbf{f}_{1234})\in \mathcal{I}(K_{12}^\tau\cap K_3\cap K_4).
\]
All the other terms in $h_{1234}$ and $\mathbf{f}_{1234}$ not explicitly listed in \eqref{h1234} and \eqref{f1234} are lower order ones, if we consider $(h_{1234},\mathbf{f}_{1234})$ as a conormal distribution as above. Also we have used the fact that $\sigma(\rho_{12})=0$.
~\\

We will analyze the wavefront set of $(\rho_{1234},\mathbf{v}_{1234})$. For the ease of computation we assume $\rho_0=c_0=1$ at the point of interest.  At certain local coordinates at $q$, we can assume $\zeta_1,\zeta_2$ are both small perturbations of $(-1,1,0,0)$, and $\gamma_{\zeta_0}(\mathbb{R}^+)\cap V\neq\emptyset$ with $\zeta_0=\theta_0=(-1,\pm\sqrt{1-r_0^2},r_0,0)$. Here $r_0\in [-1,1]$ and $\pm\sqrt{1-r_0^2}\neq 1$. Then
\[
\zeta_{12}=c(\zeta_1-\zeta_2)=c(0,\xi_1-\xi_2)
\]
with some constant $c$,
since for advective singularity $\tau_{12}$ must be zero. We can take
\[
\zeta_1 = (-1, \sqrt{1-2\varepsilon^2}, \varepsilon, \varepsilon), \quad \zeta_2 = (-1, \sqrt{1-2\varepsilon^2}, -\varepsilon, \varepsilon).
\]
Then
\[
\zeta_{12}=2c\varepsilon(0,0,1,0).
\]
Then the advective flow $\mathbf{v}_{12}$ has conormal singularity in the direction $\mathbf{e}_2$ at point $q$.
Still denote the direction of the conoromal singularity of $\mathbf{v}_{12}$ at $q_1$ to be $\zeta_{12}$. If $q_1$ and $q$ are close enough, 
 we can choose proper local coordinates at $q_1$ such that 
and $\gamma_{\theta_0}(\mathbb{R}^+)\cap V\neq \emptyset$, $\gamma_{\theta_3}(\mathbb{R}^-)\cap V\neq\emptyset$, $\gamma_{\theta_4}(\mathbb{R}^-)\cap V\neq\emptyset$, $\zeta_{12}\in\mathrm{span}\{\theta_{12}\}$ with $\theta_{12},\theta_0,\theta_0,\theta_3,\theta_4\in T^*_{q_1}$ expressed as
\[
\theta_{12}=(0,0,1,0),\quad \theta_0=(-1,\pm\sqrt{1-r_0^2},r_0,0),
\]
\[
\theta_3=(-1,1,0,0),\quad \theta_{4}=(-1,\sqrt{1-\varsigma^2},\varsigma,0),
\]
where $\varsigma$ is sufficiently small.\\
\begin{figure}[htbp]
\centering
\includegraphics[width=0.4\textwidth]{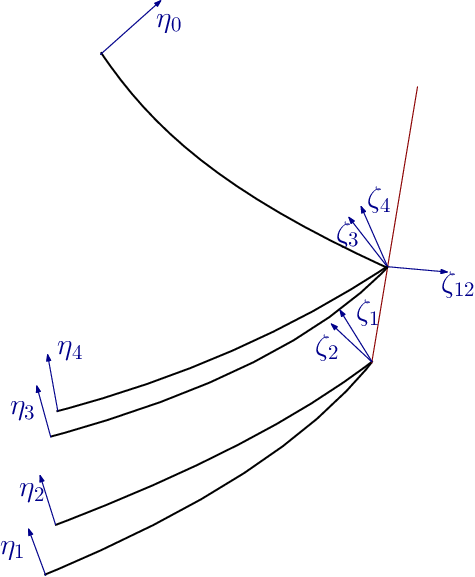}
\caption{Two primary acoustic waves, propagating along null-geodesics $\gamma_{\eta_1}$ and $\gamma_{\eta_2}$, intersect at a point $q$, generating an advective flow with a conormal singularity in the direction $\zeta_{12}$. This advective flow then propagates to a subsequent point $q_1$ (we also refer to the direction there as $\zeta_{12}$), where it interacts with two additional acoustic waves propagating along $\gamma_{\eta_3}$ and $\gamma_{\eta_4}$ to generate an outgoing acoustic wave along $\gamma_{\eta_0}$.}
\label{fig: determine advective flow}
\end{figure}

Notice that $\theta_0$ can be written as the unique linear combination of the three vectors $\theta_{12},\theta_3,\theta_4$. Actually we can write
\[
\theta_0=\left(r_0-\frac{\varsigma(1\mp\sqrt{1-r_0^2})}{1-\sqrt{1-\varsigma^2}}\right)\theta_{12}+\frac{\pm\sqrt{1-r_0^2}-\sqrt{1-\varsigma^2}}{1-\sqrt{1-\varsigma^2}}\theta_3+\frac{1\mp\sqrt{1-r_0^2}}{1-\sqrt{1-\varsigma^2}}\theta_4.
\]
We will calculate the principal symbol of $(h_{1234},\mathbf{f}_{1234})$. Without loss of generality, we write $\zeta=\theta_0$ and calculate $(h_{1234},\mathbf{f}_{1234})(\zeta)$.
Denote
\[
\zeta_{12}=\left(r_0-\frac{\varsigma(1\mp\sqrt{1-r_0^2})}{1-\sqrt{1-\varsigma^2}}\right)\theta_{12},
\]
\[
\zeta_3=\frac{\pm\sqrt{1-r_0^2}-\sqrt{1-\varsigma^2}}{1-\sqrt{1-\varsigma^2}}\theta_3,\quad \zeta_4=\frac{1\mp\sqrt{1-r_0^2}}{1-\sqrt{1-\varsigma^2}}\theta_4.
\]
We also write
\[
\zeta_j=(\tau_j,\xi_j)
\]
where $j=(12),3$ or $4$.

Denote as in Section \ref{nonlinearinteraction}
\[
b(r_0)=1\mp\sqrt{1-r_0^2}.
\]
We will use the following asymptotic behaviors
\[
\tau_{12}=0,\quad\tau_3=2b(r_0)\varsigma^{-2}+\mathcal{O}(1),\quad\quad\tau_4=-2b(r_0)\varsigma^{-2}+\mathcal{O}(1),
\]
and
\[
\xi_{12}=(-2b(r_0)\varsigma^{-1}+\mathcal{O}(1))\mathbf{e}_2,
\]
\[
\xi_3=-2b(r_0)\varsigma^{-2}\mathbf{e}_1+\mathcal{O}(1),\quad\quad \xi_4=2b(r_0)\varsigma^{-2}\mathbf{e}_1+2b(r_0)\varsigma^{-1}\mathbf{e}_2+\mathcal{O}(1).
\]

Let use first calculate the principal symbol of $(\rho_{34},\mathbf{v}_{34})$. Note that
\[
\tau_3+\tau_4=-1,
\]
\[
\xi_3+\xi_4=\left(\pm\sqrt{1-r_0^2},\frac{1\mp\sqrt{1-r_0^2}}{1-\sqrt{1-\varsigma^2}}\varsigma,0\right)=2b(r_0)\varsigma^{-1}\mathbf{e}_2\pm\sqrt{1-r_0^2}\mathbf{e}_1+\mathcal{O}(\varsigma).
\]
Take
\[
\sigma(\rho_3)(\zeta_3)=-\varsigma^2\tau_3a_3=-2b(r_0)\alpha_3+\mathcal{O}(\varsigma),\quad \sigma(\mathbf{v}_3)(\zeta_3)=\varsigma^2\xi_3a_3=-2b(r_0)a_3\mathbf{e}_1+\mathcal{O}(\varsigma),
\]
\[
\sigma(\rho_4)(\zeta_4)=-\varsigma^2\tau_4a_4=2b(r_0)a_4+\mathcal{O}(\varsigma),\quad \sigma(\mathbf{v}_4)(\zeta_4)=\varsigma^2\xi_4a_4=2b(r_0)\mathbf{e}_1+\mathcal{O}(\varsigma).
\]
By calculation, we obtain
\begin{equation}
\begin{split}
&((\tau_3+\tau_4)^2-|\xi_3+\xi_4|^2)\sigma(\rho_{34})(\zeta_{34})\\
=&a_3a_4\varsigma^{4}\left(\tau_3\tau_4+\xi_3\cdot\xi_4\right)+a_3a_4\varsigma^{4}(\xi_3+\xi_4)\cdot(\xi_3+\xi_4)(\xi_3\cdot\xi_4+(\gamma-2)\tau_3\tau_4)\\
=&-16(\gamma-1)b(r_0)^4a_3a_4\varsigma^{-2}+\mathcal{O}(\varsigma^{-1}).
\end{split}
\end{equation}
Note that
\[
(\tau_3+\tau_4)^2-|\xi_3+\xi_4|^2=-4b(r_0)^2\varsigma^{-2}+\mathcal{O}(1).
\]
Then we have
\[
\sigma(\rho_{34})(\zeta_{3}+\zeta_4)=4(\gamma-1)b(r_0)^2a_3(\zeta_3)a_4(\zeta_4)+\mathcal{O}(\varsigma).
\]
By \eqref{eqrhov2} we obtain
\[
\begin{split}
&\sigma(\mathbf{v}_{34})(\zeta_3+\zeta_4)\\
=&-\sigma(\rho_{34})(\zeta_3+\zeta_4)(\xi_3+\xi_4)-\xi_4\cdot\sigma(\mathbf{v}_3)(\zeta_3)\sigma(\mathbf{v}_4)(\zeta_4)-\xi_3\cdot\sigma(\mathbf{v}_4)(\zeta_4)\sigma(\mathbf{v}_3)(\zeta_3)\\
&-(\gamma-2)(\xi_3+\xi_4)\sigma(\rho_3)(\zeta_3)\sigma(\rho_4)(\zeta_4)\\
=&\mathcal{O}(1).
\end{split}
\]




Next we calculate the principal symbol of $(\rho_{124},\mathbf{v}_{124})$ and $(\rho_{123},\mathbf{v}_{123})$. Note that
\[
\zeta_{124}=\zeta_{12}+\zeta_4=\left(\frac{1\mp\sqrt{1-r_0^2}}{1-\sqrt{1-\varsigma^2}},\frac{1\mp\sqrt{1-r_0^2}}{1-\sqrt{1-\varsigma^2}}\sqrt{1-\varsigma^2},r_0,0\right).
\]
We will use the asymptotic behaviors
\[
\tau_{124}=-2b(r_0)\varsigma^{-2}+\mathcal{O}(1),\quad \xi_{124}=2b(r_0)\varsigma^{-2}\mathbf{e}_1+\mathcal{O}(1),
\]
and
\[
\tau_{124}^2-|\xi_{124}|^2=4b(r_0)^2\varsigma^{-2}+\mathcal{O}(1).
\]

Recall that $\sigma(\rho_{12})=0$, we have
\[
\begin{split}
&(\tau_{124}^2-|\xi_{124}|^2)\sigma(\rho_{124})(\zeta_{124})\\
=&-(\tau_{12}+\tau_4)\left((\sigma(\rho_4)\xi_{12}\cdot\sigma(\mathbf{v}_{12})+(\sigma(\rho_4)\xi_{4}\cdot\sigma(\mathbf{v}_{12})\right)\\
&+(\xi_{12}+\xi_4)\cdot\left(\sigma(\mathbf{v}_{12})\cdot\xi_4\sigma(\mathbf{v}_4)+\sigma(\mathbf{v}_4)\cdot\xi_{12}\sigma(\mathbf{v}_{12})\right)\\
=&\varsigma^{2}\tau_{124}\tau_4a_4\sigma(\mathbf{v}_{12})\cdot\xi_{124}+\varsigma^{2}a_4(\xi_{124}\cdot\xi_4)\sigma(\mathbf{v}_{12})\cdot\xi_4+\varsigma^{2}a_4(\xi_{124}\cdot\sigma(\mathbf{v}_{12}))\xi_4\cdot\xi_{12}\\
=&16b(r_0)^3\sigma(\mathbf{v}_{12})\cdot\mathbf{e}_1a_4\varsigma^{-4}+\mathcal{O}(\varsigma^{-2}),
\end{split}
\]
where we have used the fact $\sigma(\mathbf{v}_{12})\cdot\mathbf{e}_2=0$.
Then
\[
\begin{split}
\sigma(\rho_{124})(\zeta_{124})=4b(r_0)\varsigma^{-2}a_4(\zeta_4)\sigma(\mathbf{v}_{12})(\zeta_{12})\cdot\mathbf{e}_1+\mathcal{O}(1).
\end{split}
\]
We also use the identity
\[
\begin{split}
\tau_{124}\sigma(\mathbf{v}_{124})=&-\xi_{124}\sigma(\rho_{124})-\xi_{12}\cdot\sigma(\mathbf{v}_{4})\sigma(\mathbf{v}_{12})-\xi_4\cdot\sigma(\mathbf{v}_{12})\sigma(\mathbf{v}_4)\\
=&-8b(r_0)^2\varsigma^{-4}a_4(\sigma(\mathbf{v}_{12})\cdot\mathbf{e}_1)\mathbf{e}_1+\mathcal{O}(\varsigma^{-2}).
\end{split}
\]
Then
\[
\sigma(\mathbf{v}_{124})(\zeta_{124})=4b(r_0)\varsigma^{-2}a_4(\zeta_4)(\sigma(\mathbf{v}_{12})(\zeta_{12})\cdot\mathbf{e}_1)\mathbf{e}_1+\mathcal{O}(1).
\]


Now we proceed with
\[
\zeta_{123}=\left(-\frac{\pm\sqrt{1-r_0^2}-\sqrt{1-\varsigma^2}}{1-\sqrt{1-\varsigma^2}},\frac{\pm\sqrt{1-r_0^2}-\sqrt{1-\varsigma^2}}{1-\sqrt{1-\varsigma^2}},r_0-\frac{\varsigma(1\mp\sqrt{1-r_0^2})}{1-\sqrt{1-\varsigma^2}},0\right).
\]
We will use the asymptotic behaviors
\[
\tau_{123}=2b(r_0)\varsigma^{-2}+\mathcal{O}(1),\quad \xi_{123}=-2b(r_0)\varsigma^{-2}\mathbf{e}_1-2b(r_0)\varsigma^{-1}\mathbf{e}_2+\mathcal{O}(1),
\]
and
\[
\tau_{123}^2-|\xi_{123}|^2=-4b(r_0)^2\varsigma^{-2}+4r_0b(r_0)\varsigma^{-1}+\mathcal{O}(1).
\]
Similar as above, we have
\[
\begin{split}
&(\tau_{123}^2-|\xi_{123}|^2)\sigma(\rho_{123})(\zeta_{123})\\
=&\varsigma^{2}\tau_{123}\tau_3a_3\sigma(\mathbf{v}_{12})\cdot\xi_{123}+\varsigma^{2}a_3(\xi_{123}\cdot\xi_3)\sigma(\mathbf{v}_{12})\cdot\xi_3+\varsigma^{2}a_3(\xi_{123}\cdot\sigma(\mathbf{v}_{12}))\xi_3\cdot\xi_{12}\\
=&-16b(r_0)^3\sigma(\mathbf{v}_{12})\cdot\mathbf{e}_1a_3\varsigma^{-4}+\mathcal{O}(\varsigma^{-2}).
\end{split}
\]
So
\[
\sigma(\rho_{123})(\zeta_{123})=4b(r_0)\varsigma^{-2}a_3(\zeta_3)\sigma(\mathbf{v}_{12})(\zeta_{12})\cdot\mathbf{e}_1+4r_0\varsigma^{-1}a_3(\zeta_3)\sigma(\mathbf{v}_{12})(\zeta_{12})\cdot\mathbf{e}_1+\mathcal{O}(1).
\]

We also have
\[
\begin{split}
&\tau_{123}\sigma(\mathbf{v}_{123})\\
=&-\xi_{123}\sigma(\rho_{123})-\xi_{12}\cdot\sigma(\mathbf{v}_{3})\sigma(\mathbf{v}_{12})-\xi_3\cdot\sigma(\mathbf{v}_{12})\sigma(\mathbf{v}_3)\\
=&8b(r_0)^2\varsigma^{-4}a_3(\sigma(\mathbf{v}_{12})\cdot\mathbf{e}_1)\mathbf{e}_1+8r_0b(r_0)\varsigma^{-3}a_3(\sigma(\mathbf{v}_{12})\cdot\mathbf{e}_1)\mathbf{e}_1+8b(r_0)^2\varsigma^{-3}a_3(\sigma(\mathbf{v}_{12})\cdot\mathbf{e}_1)\mathbf{e}_2+\mathcal{O}(\varsigma^{-2}).
\end{split}
\]
Then
\[
\begin{split}
\sigma(\mathbf{v}_{123})(\zeta_{123})=&4b(r_0)\varsigma^{-2}a_3(\zeta_3)(\sigma(\mathbf{v}_{12})(\zeta_{12})\cdot\mathbf{e}_1)\mathbf{e}_1+4r_0\varsigma^{-1}a_3(\zeta_3)(\sigma(\mathbf{v}_{12})(\zeta_{12})\cdot\mathbf{e}_1)\mathbf{e}_1\\
&+4b(r_0)\varsigma^{-1}a_3(\zeta_3)(\sigma(\mathbf{v}_{12})(\zeta_{12})\cdot\mathbf{e}_1)\mathbf{e}_2.
\end{split}
\]

Finally we calculate the principal symbol of $(\rho_{1234},\mathbf{v}_{1234})$.
We compute
\begin{align*}
 &\sigma(\mathbf{f}_{1234})(\zeta)\\
 =&-\sigma(\mathbf{v}_{123})\cdot\xi_4\sigma(\mathbf{v}_4)-\sigma(\mathbf{v}_{124})\cdot\xi_3\sigma(\mathbf{v}_3)-\sigma(\mathbf{v}_{34})\cdot\xi_{12}\sigma(\mathbf{v}_{12})\\
 &-\sigma(\mathbf{v}_{12})\cdot(\xi_3+\xi_4)\sigma(\mathbf{v}_{34})-\sigma(\mathbf{v}_3)\cdot\xi_{124}\sigma(\mathbf{v}_{124})-\sigma(\mathbf{v}_4)\cdot\xi_{123}\sigma(\mathbf{v}_{123})\\
 &-(\gamma-2)\left(\xi_{3}\sigma(\rho_3)\sigma(\rho_{124})+\xi_{4}\sigma(\rho_4)\sigma(\rho_{123})+\sigma(\rho_3)\xi_{124}\sigma(\rho_{124})+\sigma(\rho_4)\xi_{123}\sigma(\rho_{123})\right)\\
 =&-\sigma(\mathbf{v}_{123})\cdot\xi_4\sigma(\mathbf{v}_4)-\sigma(\mathbf{v}_{124})\cdot\xi_3\sigma(\mathbf{v}_3)-\sigma(\mathbf{v}_{34})\cdot\xi_{12}\sigma(\mathbf{v}_{12})\\
 &-\sigma(\mathbf{v}_{12})\cdot\xi_{34}\sigma(\mathbf{v}_{34})-\sigma(\mathbf{v}_3)\cdot\xi_{124}\sigma(\mathbf{v}_{124})-\sigma(\mathbf{v}_4)\cdot\xi_{123}\sigma(\mathbf{v}_{123})\\
 &-(\tau_{123}\sigma(\rho_{123})+\xi_{123}\cdot\sigma(\mathbf{v}_{123}))\sigma(\mathbf{v}_{4})-(\tau_{124}\sigma(\rho_{124})+\xi_{124}\cdot\sigma(\mathbf{v}_{124}))\sigma(\mathbf{v}_{3})\\
 &-(\tau_{34}\sigma(\rho_{34})+\xi_{34}\cdot\sigma(\mathbf{v}_{34}))\sigma(\mathbf{v}_{12})\\
 &-\xi_{123}\cdot(\sigma(\rho_3)\sigma(\mathbf{v}_{12}))\sigma(\mathbf{v}_4)-\xi_{124}\cdot(\sigma(\rho_4)\sigma(\mathbf{v}_{12}))\sigma(\mathbf{v}_3)-\xi_{34}\cdot(\sigma(\rho_3)\sigma(\mathbf{v}_4))\sigma(\mathbf{v}_{12})\\
 &-\left(\tau_{123}\sigma(\mathbf{v}_{123})+\xi_{123}\sigma(\rho_{123})\right)\sigma(\rho_4)-\left(\tau_{124}\sigma(\mathbf{v}_{124})+\xi_{124}\sigma(\rho_{124})\right)\sigma(\rho_3)\\
&-(\xi_3\cdot\sigma(\mathbf{v}_{12})\sigma(\mathbf{v}_3)+\xi_{12}\cdot\sigma(\mathbf{v}_3)\sigma(\mathbf{v}_{12}))\sigma(\rho_4)-(\xi_4\cdot\sigma(\mathbf{v}_{12})\sigma(\mathbf{v}_4)+\xi_{12}\cdot\sigma(\mathbf{v}_4)\sigma(\mathbf{v}_{12}))\sigma(\rho_3)\\
&-\left(\tau_{3}\sigma(\rho_{3})+\xi_{3}\cdot\sigma(\mathbf{v}_{3})\right)\sigma(\mathbf{v}_{124})-\left(\tau_{4}\sigma(\rho_{4})+\xi_{4}\cdot\sigma(\mathbf{v}_{4})\right)\sigma(\mathbf{v}_{123})\\
&-\left(\tau_3\sigma(\mathbf{v}_3)+\xi_3\sigma(\rho_3)\right)\sigma(\rho_{124})-\left(\tau_4\sigma(\mathbf{v}_4)+\xi_4\sigma(\rho_4)\right)\sigma(\rho_{123})\\
&-(\gamma-2)\left(\xi_{3}\sigma(\rho_3)\sigma(\rho_{124})+\xi_4\sigma(\rho_4)\sigma(\rho_{123})+\sigma(\rho_{3})\xi_{124}\sigma(\rho_{124})+\sigma(\rho_{4})\xi_{123}\sigma(\rho_{123})\right).
\end{align*}

Noticing
\[
 \begin{split}
 &-\xi_{123}\cdot(\sigma(\rho_3)\sigma(\mathbf{v}_{12}))\sigma(\mathbf{v}_4)-\xi_{124}\cdot(\sigma(\rho_4)\sigma(\mathbf{v}_{12}))\sigma(\mathbf{v}_3)-\xi_{34}\cdot(\sigma(\rho_3)\sigma(\mathbf{v}_4))\sigma(\mathbf{v}_{12})\\
&-(\xi_3\cdot\sigma(\mathbf{v}_{12})\sigma(\mathbf{v}_3)+\xi_{12}\cdot\sigma(\mathbf{v}_3)\sigma(\mathbf{v}_{12}))\sigma(\rho_4)-(\xi_4\cdot\sigma(\mathbf{v}_{12})\sigma(\mathbf{v}_4)+\xi_{12}\cdot\sigma(\mathbf{v}_4)\sigma(\mathbf{v}_{12}))\sigma(\rho_3)\\
=&\sigma(\rho_3)(\xi_{1234}\cdot\sigma(\mathbf{v}_4)\sigma(\mathbf{v}_{12})+\xi_{1234}\cdot\sigma(\mathbf{v}_{12})\sigma(\mathbf{v}_4))+\sigma(\rho_4)(\xi_{1234}\cdot\sigma(\mathbf{v}_3)\sigma(\mathbf{v}_{12})+\xi_{1234}\cdot\sigma(\mathbf{v}_{12})\sigma(\mathbf{v}_3))\\
=&\mathcal{O}(1)
 \end{split}
\]
and
\[
-\xi_{1234}\cdot\sigma(\mathbf{v}_{34})\sigma(\mathbf{v}_{12})-\tau_{34}\sigma(\rho_{34})=\mathcal{O}(1).
\]
Then we have
\[
 \begin{split}
  &\sigma(\mathbf{f}_{1234})(\zeta)\\
  =&-\xi_{1234}\cdot\sigma(\mathbf{v}_{123})\sigma(\mathbf{v}_4)-\xi_{1234}\cdot\sigma(\mathbf{v}_4)\sigma(\mathbf{v}_{123})-\tau_{1234}\sigma(\rho_4)\sigma(\mathbf{v}_{123})\\
  &-\xi_{1234}\cdot\sigma(\mathbf{v}_{124})\sigma(\mathbf{v}_3)-\xi_{1234}\cdot\sigma(\mathbf{v}_3)\sigma(\mathbf{v}_{124})-\tau_{1234}\sigma(\rho_3)\sigma(\mathbf{v}_{124})\\
  &-\tau_{1234}\sigma(\rho_{123})\sigma(\mathbf{v}_4)-\tau_{1234}\sigma(\rho_{124})\sigma(\mathbf{v}_3)-(\gamma-1)\sigma(\rho_{123})\sigma(\rho_4)\xi_{1234}-(\gamma-1)\sigma(\rho_{124})\sigma(\rho_3)\xi_{1234}\\
  &+\mathcal{O}(1).
 \end{split}
\]
Also we have
 \[
  \begin{split}
   &\sigma(h_{1234})(\zeta)\\
   =&-\xi_{1234}\cdot\left(\sigma(\rho_3)\sigma(\mathbf{v}_{124})+\sigma(\rho_4)\sigma(\mathbf{v}_{123})+\sigma(\rho_{123})\sigma(\mathbf{v}_4)+\sigma(\rho_{124})\sigma(\mathbf{v}_3)+\sigma(\rho_{34})\sigma(\mathbf{v}_{12})\right).
  \end{split}
 \]
Summing up, we end up with
 \[
 \begin{split}
 & \xi\cdot\sigma(\mathbf{f}_{1234})(\zeta)- \tau\sigma(h_{1234})(\zeta)\\
=& -2\xi\cdot\sigma(\mathbf{v}_{123})\xi\cdot\sigma(\mathbf{v}_{4})-2\xi\cdot\sigma(\mathbf{v}_{124})\xi\cdot\sigma(\mathbf{v}_{3})-(\gamma-1)(\sigma(\rho_{123})\sigma(\rho_4)+\sigma(\rho_{124})\sigma(\rho_3))+\mathcal{O}(1)\\
=&-\left( 32r_0b(r_0)^2(1-b(r_0))+16r_0(1-r_0^2)b(r_0)\right)\varsigma^{-1}a_3a_4\sigma(\mathbf{v}_{12})\cdot\mathbf{e}_1\\
&+8(\gamma-1)r_0b(r_0)\varsigma^{-1}a_3a_4\sigma(\mathbf{v}_{12})\cdot\mathbf{e}_1+\mathcal{O}(1).
\end{split}
 \]
Notice that when $r_0=1$, $b(r_0)=1$,
\[
\xi\cdot\sigma(f_{1234})(\zeta)- \tau\sigma(h_{1234})(\zeta)=\left(8(\gamma-1)\varsigma^{-1}+\mathcal{O}(1)\right)a_3(\zeta_3)a_4(\zeta_4)\sigma(\mathbf{v}_{12})(\zeta_{12})\cdot\mathbf{e}_1.
\]
To show it is nonzero for sufficiently small $\varsigma$, we argue that $\sigma(\mathbf{v}_{12})(\zeta_{12}) \cdot \mathbf{e}_1 \neq 0$.

We note that by Lemma \ref{lem: principal symbol of v_ij},
\[
\sigma(\mathbf{v}_{12})(\zeta_{12}) \cdot \mathbf{e}_1 \propto \Pi_{\xi_1/|\xi_1| - \xi_2/|\xi_2|}\left[\left(\frac{\xi_1}{|\xi_1|} + \frac{\xi_2}{|\xi_2|}\right) \times (\nabla \times \mathbf{v}_0)\right] \cdot \mathbf{e}_1,
\]
where $A \propto B$ means $A = \kappa B$ for some $\kappa \neq 0$, and
\[
\frac{\xi_1}{|\xi_1|} - \frac{\xi_2}{|\xi_2|} = 2\varepsilon\mathbf{e}_2, \quad \frac{\xi_1}{|\xi_1|} + \frac{\xi_2}{|\xi_2|} = 2\sqrt{1-2\varepsilon^2}\mathbf{e}_1 + 2\varepsilon\mathbf{e}_3.
\]
Combine the orthogonal projection and dot product, we have $\Pi_\xi[\mathbf{w}] \cdot \mathbf{e}_1 = \mathbf{w} \cdot \mathbf{e}_1$ for all vector $\mathbf{w}$. The term inside the bracket is orthogonal to $\sqrt{1-2\varepsilon^2}\mathbf{e}_1 + \varepsilon\mathbf{e}_3$, meaning it is spanned by $\mathbf{e}_2$ and $\varepsilon\mathbf{e}_1 - \sqrt{1 - 2\varepsilon^2}\mathbf{e}_3$. Note that
\[
\left(\frac{\xi_1}{|\xi_1|} + \frac{\xi_2}{|\xi_2|}\right) \times (\nabla \times \mathbf{v}_0) \propto \mathbf{e}_2 \iff (\nabla \times \mathbf{v}_0) \cdot \mathbf{e}_2 = 0.
\]
By slightly changing the coordinate system (namely choosing another outgoing $\theta_0$ and another incoming vector to be $(1, 1, 0, 0)$), we may assume $(\nabla \times \mathbf{v}_0) \cdot \mathbf{e}_2 \neq 0$. Hence the term before the orthogonal projection has nonzero $\mathbf{e}_1$ component, implying $\sigma(\mathbf{v}_{12})(\zeta_{12}) \cdot \mathbf{e}_1 \neq 0$.

So $\xi\cdot\sigma(\mathbf{f}_{1234})(\zeta)- \tau\sigma(h_{1234})(\zeta)$ is generically nonzero for $\zeta\in\mathrm{span}\{\zeta_{12},\zeta_{3},\zeta_{4}\}\cap L^{*,+}_{q_1}$ at least in a neighborhood of $\zeta_0$. Then the singularities of the acoustic waves $(\rho_{1234},\mathbf{v}_{1234})$ generated by $(h_{1234}, \mathbf{f}_{1234})$ will propagate to $V$. The set $\mathrm{sing\,supp}(\rho_{1234})\cap V$ tends to a set of Hausdorff dimension $2$ as $s_0\rightarrow 0$.

\section{Recovery of background solution}\label{chfinal}

\subsection{Recovery of the advective integral curve}

By Theorem \ref{thm: determination of conformal class}, the conformal structure of the metric has been determined. In other words, all the null-geodesics can be determined up to diffeomorphism and reparameterization. Let $\gamma^{(j)}_{\zeta_i}$ be the null-geodesic in Lorentzian metric $g^{(j)}$. We take
\[
\gamma^{(1)}_{\eta_1}\cap \gamma^{(1)}_{\eta_2}=\{q\},\quad \gamma^{(1)}_{\eta_3}\cap \gamma^{(1)}_{\eta_4}=\{q_1\},
\]
and then
\[
\gamma^{(2)}_{\eta_1}\cap \gamma^{(2)}_{\eta_2}=\{\Phi(q)\},\quad \gamma^{(2)}_{\eta_3}\cap \gamma^{(2)}_{\eta_4}=\{\Phi(q_1)\}.
\]
We can vary the choices of $\eta_i$, $i=1,2,3,4$, without changing the intersection points.
Assume $q< q_1$ and then $\Phi(q)< \Phi(q_1)$. 
If $q_1$ is not on the advective flow through $q$, then $K^\tau_{12}\cap K_3\cap K_4=\emptyset$ for sufficiently small $s_0$, but it is still possible that $ K_{12}^\tau\cap K_j\neq \emptyset$ for $j=3$ or $4$. However for this case the singularity of $(\rho_{1234},\mathbf{v}_{1234})\vert_V$ is contained in $\bigcup_{j=3}^4\Lambda_{\tau\pm c_0|\xi|}(K_{12}^\tau\cap K_j)\cup\bigcup_{i=1}^4\Lambda_i$. Therefore $\mathrm{sing\,supp}(\rho_{1234})$
tends to a set of Hausdorff dimension at most $1$ as $s_0\rightarrow 0$.
If $q_1$ is on the advective flow through $q$ and in addition $q_1$ and $q$ are close enough, we can always choose $\eta_1,\eta_2,\eta_3,\eta_4$ such that $\mathrm{sing\, supp}\sigma(\rho_{1234})\cap V$ tends to a set whose Hausdorff dimension is $2$ by the argument in previous section, see also Figure \ref{fig: determine advective flow}. Assume 
\[
L^{(1)}_V=L^{(2)}_V.
\]This means that we can say that $q_1$ is on the advective integral curve through $q$ if and only if $\Phi(q_1)$ is on the advective integral curve through $\Phi(q)$ if $q_1$ and $q$ are close enough. In this way we can now conclude
\[
\Phi_*(\partial_t+\mathbf{v}_0^{(1)}\cdot\nabla) \propto (\partial_t+\mathbf{v}_0^{(2)}\cdot\nabla)\quad\text{in }I_{g^{(2)}(p^-,p^+)}.
\]
Therefore $\ell$ is an integral curve for $\partial_t+\mathbf{v}_0^{(1)}\cdot \nabla$ if and only if $\Phi(\ell)$ is an integral curve for $\partial_t+\mathbf{v}_0^{(2)}\cdot \nabla$ up to reparameterization.

\subsection{Recovery of $\rho_0$ and $\mathbf{v}_0$}
\begin{proof}[Proof of Theorem \ref{thm:mainthm}.]
Denote
 \[
 \tilde{g}^{(j)} = (c_0^{(j)})^2g^{(j)}=\left( \begin{array}{cc}
 -(c_0^{(j)})^2+(\mathbf{v}_0^{(j)})^T\mathbf{v}^{(j)}_0&-(\mathbf{v}^{(j)}_0)^T\\
 -\mathbf{v}^{(j)}_0 &I
  \end{array}\right)
  \]
 to be the representative of the conformal class of $g^{(j)}$ whose restriction on $\{t = \text{constant}\}$ slice is the Euclidean metric. Summarizing previous results, we have shown that if $L_V^{(1)} = L_V^{(2)}$, then:
\begin{enumerate}
    \item $\tilde{g}^{(1)}|_V = \tilde{g}^{(2)}|_V$;
    \item the metrics $\tilde{g}^{(1)}$ and $\tilde{g}^{(2)}$ are conformally diffeomorphic to each other on causal diamonds, namely there exists a diffeomorphism $\Phi: I_{g^{(1)}}(p^-, p^+) \to I_{g^{(2)}}(p^-, p^+)$ and a conformal factor $\varphi>0$ on $I_{g^{(1)}}(p^-, p^+)$ such that $\Phi|_V = \text{Id}$, $\varphi|_V = 1$, and $\tilde{g}^{(1)} = \varphi\Phi^*\tilde{g}^{(2)}$;
    \item $\ell$ is an integral curve for $\partial_t + \mathbf{v}_0^{(1)}\cdot \nabla$ in $I_{g^{(1)}}(p^-, p^+)$ if and only if $\Phi(\ell)$ is an integral curve for $\partial_t + \mathbf{v}_0^{(2)}\cdot \nabla$ in $I_{g^{(2)}}(p^-, p^+)$ up to reparameterization.
\end{enumerate}
We will show that $I_{g^{(1)}}(p^-, p^+) = I_{g^{(2)}}(p^-, p^+)$, and $(\rho_0^{(1)}, \mathbf{v}_0^{(1)}) = (\rho_0^{(2)}, \mathbf{v}_0^{(2)})$ on the causal diamond.

A straightforward computation shows that
\begin{align*}
    \tilde{g}^{(j)}(\partial_t + \mathbf{v}_0^{(j)}\cdot \nabla, \partial_{x^k}) = -v_0^{k,(j)} + v_0^{k,(j)} = 0,
\end{align*}
where $\mathbf{v}_0^{(j)} = (v_0^{1,(j)}, v_0^{2,(j)}, v_0^{3,(j)})$. Namely the orthogonal complement of $\partial_t + \mathbf{v}_0^{(j)}\cdot \nabla$ with respect to $\tilde{g}^{(j)}$ is $\text{span}\{\partial_{x^1}, \partial_{x^2}, \partial_{x^3}\}$ for both $j = 1, 2$. By (3), at any $(t, x) \in I_{g^{(1)}}(p^-, p^+)$,
\[
\Phi_*(\partial_t + \mathbf{v}_0^{(1)}(t, x)\cdot \nabla) = k(t, x) (\partial_t + \mathbf{v}_0^{(2)}(\Phi(t, x))\cdot \nabla)
\]
for some non-zero scaling $k(t, x)$. By (2), we thus have
\begin{align*}
    \tilde{g}^{(2)}(\partial_t + \mathbf{v}_0^{(2)}(\Phi(t, x))\cdot \nabla, \Phi_*(\partial_{x^k})) &= \frac{1}{k(t, x)}\Phi^*\tilde{g}^{(2)}(\partial_t + \mathbf{v}_0^{(1)}(t, x)\cdot \nabla, \partial_{x^k}) \\
    &= \frac{1}{k(t, x)\varphi(t, x)}\tilde{g}^{(1)}(\partial_t + \mathbf{v}_0^{(1)}(t, x)\cdot \nabla, \partial_{x^k}) \\
    &= 0.
\end{align*}
As a result, $\Phi_*$ preserves $\text{span}\{\partial_{x^1}, \partial_{x^2}, \partial_{x^3}\}$. Since $\Phi$ is smooth, it must map time slice to time slice, namely $\Phi(\{t = c_1\} \cap I_{g^{(1)}}(p^-, p^+)) = \{t = c_2\} \cap I_{g^{(2)}}(p^-, p^+)$. Moreover, as $\partial_{x^1}, \partial_{x^2}, \partial_{x^3}$ are all spacelike with respect to $\tilde{g}^{(1)}$, any causal direction must have non-vanishing $\partial_t$ component, hence causal curves from $p^-$ to $p^+$ must be strictly monotone in $t$. If $\{t = c_1\} \cap I_{g^{(1)}}(p^-, p^+) \neq \emptyset$, then $c_1$ must be in between $t^- = t(p^-)$ and $t^+ = t(p^+)$. As $\mu$ is a continuous timelike curve from $p^-$ to $p^+$, it is fully in $I_{g^{(1)}}(p^-, p^+)$, and hence intersects with $\{t = c_1\} \cap I_{g^{(1)}}(p^-, p^+)$. By (1), $\Phi$ is the identity map on $V$ which includes $\mu$, so $c_1 = c_2$. We have thus shown that
\[
\Phi(\{t = c\} \cap I_{g^{(1)}}(p^-, p^+)) = \{t = c\} \cap I_{g^{(2)}}(p^-, p^+), \quad \forall c \in (t^-, t^+).
\]

On each time slice $\{t = c\} \cap I_g(p^-, p^+)$, both metrics $\tilde{g}^{(1)}$ and $\tilde{g}^{(2)}$ are Euclidean metrics, denoted by $\delta$. Thus $\Phi_c := \Phi|_{\{t = c\} \cap I_{g_1}(p^-, p^+)}: \{t = c\} \cap I_{g_1}(p^-, p^+) \to \{t = c\} \cap I_{g_2}(p^-, p^+)$ is a diffeomorphism such that $\delta = \varphi \Phi_c^* \delta$. By Liouville's Theorem for conformal mapping, $\Phi_c$ must be a M\"obius transformation (cf., for example, \cite{kushelman2024liouville}). Since $\{t = c\} \cap I_{g_1}(p^-, p^+) \cap V \neq \emptyset$, $\Phi_c|_{\{t = c\} \cap I_g(p^-, p^+) \cap V} = \text{Id}$ and $\varphi|_{\{t = c\} \cap I_g(p^-, p^+) \cap V} = 1$, real analyticity of M\"obius transform forces the conformal map to be the identity map. Namely, $\{t = c\} \cap I_{g_1}(p^-, p^+) = \{t = c\} \cap I_{g_2}(p^-, p^+)$, $\Phi_c = \text{Id}$ and $\varphi = 1$ on $\{t = c\} \cap I_{g_1}(p^-, p^+)$. As this holds for any $c \in (t^-, t^+)$, $\Phi = \text{Id}$ and $\varphi \equiv 1$. This gives $\tilde{g}^{(1)} = \tilde{g}^{(2)}$, so $(c_0^{(1)}, \mathbf{v}_0^{(1)}) = (c_0^{(2)}, \mathbf{v}_0^{(2)})$ on the causal diamond due to the specific form of the metric. Thus, $(\rho_0^{(1)}, \mathbf{v}_0^{(1)}) = (\rho_0^{(2)}, \mathbf{v}_0^{(2)})$ on the same causal diamond $I_g(p^-, p^+)$.
\end{proof}

\bibliographystyle{abbrv}
\bibliography{biblio}

@article{kurylev2018inverse,
  title={Inverse problems for {L}orentzian manifolds and non-linear hyperbolic equations},
  author={Kurylev, Yaroslav and Lassas, Matti and Uhlmann, Gunther},
  journal={Inventiones Mathematicae},
  volume={212},
  number={3},
  pages={781--857},
  year={2018},
  publisher={Springer}
}

@article{lassas2018inverse,
  title={Inverse problems for semilinear wave equations on {L}orentzian manifolds},
  author={Lassas, Matti and Uhlmann, Gunther and Wang, Yiran},
  journal={Communications in Mathematical Physics},
  volume={360},
  number={2},
  pages={555--609},
  year={2018},
  publisher={Springer}
}

@article{de2018nonlinear,
  title={Nonlinear interaction of waves in elastodynamics and an inverse problem},
  author={de Hoop, Maarten and Uhlmann, Gunther and Wang, Yiran},
  journal={Mathematische Annalen},
  volume={376},
  number={1-2},
  pages={765--795},
  year={2020},
  publisher={Springer}
}

@article{helin2018correlation,
  title={Correlation based passive imaging with a white noise source},
  author={Helin, Tapio and Lassas, Matti and Oksanen, Lauri and Saksala, Teemu},
  journal={Journal de Math{\'e}matiques Pures et Appliqu{\'e}es},
  volume={116},
  pages={132--160},
  year={2018},
  publisher={Elsevier}
}

@article{lassas2025coefficient,
  title={Coefficient Determination for Nonlinear {S}chr{\"o}dinger Equations on Manifolds},
  author={Lassas, Matti and Oksanen, Lauri and Sahoo, Suman Kumar and Salo, Mikko and Tetlow, Alexander},
  journal={SIAM Journal on Mathematical Analysis},
  volume={57},
  number={4},
  pages={4425--4458},
  year={2025},
  publisher={SIAM}
}

@book{hormander2009analysis,
  title={The analysis of linear partial differential operators {IV}: Fourier Integral Operators},
  author={H{\"o}rmander, Lars},
  year={2009},
  publisher={Springer Science \& Business Media}
}

@article{melrose1979lagrangian,
  title={Lagrangian intersection and the {C}auchy problem},
  author={Melrose, Richard B and Uhlmann, Gunther A},
  journal={Communications on Pure and Applied Mathematics},
  volume={32},
  number={4},
  pages={483--519},
  year={1979},
  publisher={Wiley Online Library}
}

@article{greenleaf1993recovering,
  title={Recovering singularities of a potential from singularities of scattering data},
  author={Greenleaf, Allan and Uhlmann, Gunther},
  journal={Communications in mathematical physics},
  volume={157},
  number={3},
  pages={549--572},
  year={1993},
  publisher={Springer}
}

@article{wang2019inverse,
  title={Inverse problems for quadratic derivative nonlinear wave equations},
  author={Wang, Yiran and Zhou, Ting},
  journal={Communications in Partial Differential Equations},
  volume={44},
  number={11},
  pages={1140--1158},
  year={2019},
  publisher={Taylor \& Francis}
}

@inproceedings{feizmohammadi2021inverse,
  title={Inverse problems for nonlinear hyperbolic equations with disjoint sources and receivers},
  author={Feizmohammadi, Ali and Lassas, Matti and Oksanen, Lauri},
  booktitle={Forum of Mathematics, Pi},
  volume={9},
  pages={e10},
  year={2021},
  organization={Cambridge University Press}
}

@article{nursultanov2025determining,
  title={Determining Lorentzian manifold from non-linear wave observation at a single point},
  author={Nursultanov, Medet and Oksanen, Lauri and Tzou, Leo},
  journal={Journal of Differential Equations},
  volume={444},
  pages={113563},
  year={2025},
  publisher={Elsevier}
}

@article {dencker1982polarization,
    AUTHOR = {Dencker, Nils},
     TITLE = {On the propagation of polarization sets for systems of real
              principal type},
   JOURNAL = {J. Functional Analysis},
  FJOURNAL = {Journal of Functional Analysis},
    VOLUME = {46},
      YEAR = {1982},
    NUMBER = {3},
     PAGES = {351--372},
      ISSN = {0022-1236},
   MRCLASS = {58G17 (35Q20 35S05 58G15)},
  MRNUMBER = {661876},
MRREVIEWER = {K.\ A.\ Yagdjian},
       DOI = {10.1016/0022-1236(82)90051-9},
       URL = {https://doi.org/10.1016/0022-1236(82)90051-9},
}

@misc{ticequasilinear,
  title={Quasilinear symmetric hyperbolic systems},
  author={Tice, Ian},
  url={https://www.math.cmu.edu/~iantice/notes/quasilinear_hyperbolic_systems.pdf},
  year={2007}
}

@misc{taylor2023partial,
  title={Partial {D}ifferential {E}quations {III}: {N}onlinear {E}quations},
  author={Taylor, Michael Eugene},
  year={2023},
  publisher={Springer}
}

@article{uhlmann2023inverse,
  title={An inverse boundary value problem arising in nonlinear acoustics},
  author={Uhlmann, Gunther and Zhang, Yang},
  journal={SIAM Journal on Mathematical Analysis},
  volume={55},
  number={2},
  pages={1364--1404},
  year={2023},
  publisher={SIAM}
}

@article{lassas2025gaussian,
  title={Gaussian beam interactions and inverse source problems for nonlinear wave equations},
  author={Lassas, Matti and Liimatainen, Tony and Pohjola, Valter and Tyni, Teemu},
  journal={arXiv preprint arXiv:2510.11494},
  year={2025}
}

@article{eptaminitakis2024weakly,
  title={Weakly nonlinear geometric optics for the {W}estervelt equation and recovery of the nonlinearity},
  author={Eptaminitakis, Nikolas and Stefanov, Plamen},
  journal={SIAM Journal on Mathematical Analysis},
  volume={56},
  number={1},
  pages={801--819},
  year={2024},
  publisher={SIAM}
}

@article{qiu2026inverse,
  title={Inverse boundary value problems of determining nonlinear coefficients for the {JMGT} equation},
  author={Qiu, Dong and Xu, Xiang and Ye, Yeqiong and Zhou, Ting},
  journal={arXiv preprint arXiv:2603.14194},
  year={2026}
}

@article{kaltenbacher2023simultaneous,
  title={On the simultaneous reconstruction of the nonlinearity coefficient and the sound speed in the {W}estervelt equation},
  author={Kaltenbacher, Barbara and Rundell, William},
  journal={Inverse Problems},
  volume={39},
  number={10},
  pages={105001},
  year={2023},
  publisher={IOP Publishing}
}

@article{sa2024recovery,
  title={Recovery of a general nonlinearity in the semilinear wave equation},
  author={S{\'a} Barreto, Ant{\^o}nio and Stefanov, Plamen},
  journal={Asymptotic Analysis},
  volume={138},
  number={1-2},
  pages={27--68},
  year={2024},
  publisher={SAGE Publications Sage UK: London, England}
}

@article{sa2022recovery,
  title={Recovery of a cubic non-linearity in the wave equation in the weakly non-linear regime},
  author={S{\'a} Barreto, Ant{\^o}nio and Stefanov, Plamen},
  journal={Communications in Mathematical Physics},
  volume={392},
  number={1},
  pages={25--53},
  year={2022},
  publisher={Springer}
}

@article{kurylev2022inverse,
  title={Inverse problem for {E}instein-scalar field equations},
  author={Kurylev, Yaroslav and Lassas, Matti and Oksanen, Lauri and Uhlmann, Gunther},
  journal={Duke Mathematical Journal},
  volume={171},
  number={16},
  pages={3215--3282},
  year={2022},
  publisher={Duke University Press}
}

@article{uhlmann2020determination,
  title={Determination of Space-Time Structures from Gravitational Perturbations},
  author={Uhlmann, Gunther and Wang, Yiran},
  journal={Communications on Pure and Applied Mathematics},
  volume={73},
  number={6},
  pages={1315--1367},
  year={2020},
  publisher={Wiley Online Library}
}

@article{uhlmann2022inverse,
  title={Inverse boundary value problems for wave equations with quadratic nonlinearities},
  author={Uhlmann, Gunther and Zhang, Yang},
  journal={Journal of Differential Equations},
  volume={309},
  pages={558--607},
  year={2022},
  publisher={Elsevier}
}

@article{lassas2020uniqueness,
  title={Uniqueness, reconstruction and stability for an inverse problem of a semi-linear wave equation},
  author={Lassas, Matti and Liimatainen, Tony and Potenciano-Machado, Leyter and Tyni, Teemu},
  journal={Journal of Differential Equations},
  volume={337},
  pages={395--435},
  year={2022},
  publisher={Elsevier}
}

@article{lassas2022inverse,
  title={An inverse problem for a semi-linear wave equation: a numerical study},
  author={Lassas, Matti and Liimatainen, Tony and Potenciano-Machado, Leyter and Tyni, Teemu},
  journal={arXiv preprint arXiv:2203.09427},
  year={2022}
}

@article{lassas2021stability,
  title={Stability estimates for inverse problems for semi-linear wave equations on {L}orentzian manifolds},
  author={Lassas, Matti and Liimatainen, Tony and Potenciano-Machado, Leyter and Tyni, Teemu},
  journal={arXiv preprint arXiv:2106.12257},
  year={2021}
}

@article{tzou2023determining,
  title={Determining Riemannian manifolds from nonlinear wave observations at a single point},
  author={Tzou, Leo},
  journal={Inverse Problems},
  volume={39},
  number={11},
  pages={115001},
  year={2023},
  publisher={IOP Publishing}
}

@article{chen2025retrieving,
  title={Retrieving {Y}ang--{M}ills--{H}iggs fields in {M}inkowski space from active local measurements},
  author={Chen, Xi and Lassas, Matti and Oksanen, Lauri and Paternain, Gabriel P},
  journal={Mathematische Annalen},
  volume={391},
  number={2},
  pages={2385--2428},
  year={2025},
  publisher={Springer}
}

@article{chen2025inverse,
  title={An inverse problem for the Standard Model of particle physics},
  author={Chen, Xi and Lassas, Matti and Oksanen, Lauri and Paternain, Gabriel P},
  journal={arXiv preprint arXiv:2505.24454},
  year={2025}
}

@article{uhlmann2024determination,
  title={Determination of the density in a nonlinear elastic wave equation},
  author={Uhlmann, Gunther and Zhai, Jian},
  journal={Mathematische Annalen},
  volume={390},
  number={2},
  pages={2825--2858},
  year={2024},
  publisher={Springer}
}

@article{hintz2024inverse,
  title={Inverse Nonlinear Scattering by a Metric},
  author={Hintz, Peter and Barreto, Ant{\^o}nio S{\'a} and Uhlmann, Gunther and Zhang, Yang},
  journal={arXiv preprint arXiv:2411.09671},
  year={2024}
}

@article{alexakis2024inverse,
  title={Inverse scattering problems for non-linear wave equations on Lorentzian manifolds},
  author={Alexakis, Spyros and Isozaki, Hiroshi and Lassas, Matti and Tyni, Teemu},
  journal={arXiv preprint arXiv:2411.09354},
  year={2024}
}

@article{uhlmann2021inverse,
  title={On an inverse boundary value problem for a nonlinear elastic wave equation},
  author={Uhlmann, Gunther and Zhai, Jian},
  journal={Journal de Math{\'e}matiques Pures et Appliqu{\'e}es},
  volume={153},
  pages={114--136},
  year={2021},
  publisher={Elsevier}
}

@article{chen2021inverse,
  title={Inverse problem for the {Y}ang--{M}ills equations},
  author={Chen, Xi and Lassas, Matti and Oksanen, Lauri and Paternain, Gabriel P},
  journal={Communications in Mathematical Physics},
  volume={384},
  pages={1187--1225},
  year={2021},
  publisher={Springer}
}

@article{hintz2022inverse,
  title={An inverse boundary value problem for a semilinear wave equation on Lorentzian manifolds},
  author={Hintz, Peter and Uhlmann, Gunther and Zhai, Jian},
  journal={International Mathematics Research Notices},
  volume={2022},
  number={17},
  pages={13181--13211},
  year={2022},
  publisher={Oxford University Press}
}

@article{hintz2022dirichlet,
  title={The {D}irichlet-to-{N}eumann map for a semilinear wave equation on {L}orentzian manifolds},
  author={Hintz, Peter and Uhlmann, Gunther and Zhai, Jian},
  journal={Communications in Partial Differential Equations},
  volume={47},
  number={12},
  pages={2363--2400},
  year={2022},
  publisher={Taylor \& Francis}
}

@article{acosta2022nonlinear,
  title={Nonlinear ultrasound imaging modeled by a {W}estervelt equation},
  author={Acosta, Sebastian and Uhlmann, Gunther and Zhai, Jian},
  journal={SIAM Journal on Applied Mathematics},
  volume={82},
  number={2},
  pages={408--426},
  year={2022},
  publisher={SIAM}
}

@article{chen2021detection,
  title={Detection of {H}ermitian connections in wave equations with cubic non-linearity},
  author={Chen, Xi and Lassas, Matti and Oksanen, Lauri and Paternain, Gabriel P},
  journal={Journal of the European Mathematical Society},
  volume={24},
  number={7},
  pages={2191--2232},
  year={2021}
}

@article{feizmohammadi2019recovery,
 title={Recovery of zeroth order coefficients in non-linear wave equations},
  volume={21}, 
  DOI={10.1017/S1474748020000122}, 
  number={2}, 
  journal={Journal of the Institute of Mathematics of Jussieu}, 
  publisher={Cambridge University Press}, 
  author={Feizmohammadi, Ali and Oksanen, Lauri}, 
  year={2022}, 
  pages={367–393}}

@article{balehowsky2022inverse,
  title={An Inverse Problem for the Relativistic {B}oltzmann Equation},
  author={Balehowsky, Tracey and Kujanp{\"a}{\"a}, Antti and Lassas, Matti and Liimatainen, Tony},
  journal={Communications in Mathematical Physics},
  volume={396},
  number={3},
  pages={983--1049},
  year={2022},
  publisher={Springer}
}

@article{chen2025stable,
  title={Stable inversion of potential in nonlinear wave equations with cubic nonlinearity},
  author={Chen, Xi and Lu, Shuai and Zhang, Ruochong},
  journal={Mathematische Annalen},
  volume={392},
  number={3},
  pages={4283--4314},
  year={2025},
  publisher={Springer}
}

@article{oksanen2025inverse,
  title={Inverse problem for connections in semi-linear wave equations on {L}orentzian manifolds},
  author={Oksanen, Lauri and Zhang, Ruochong},
  journal={arXiv preprint arXiv:2509.25971},
  year={2025}
}

@article{alexakis2022lorentzian,
  title={Lorentzian {C}alder{\'o}n problem under curvature bounds},
  author={Alexakis, Spyros and Feizmohammadi, Ali and Oksanen, Lauri},
  journal={Inventiones mathematicae},
  volume={229},
  number={1},
  pages={87--138},
  year={2022},
  publisher={Springer}
}

@article{alexakis2024lorentzian,
  title={Lorentzian {C}alder{\'o}n problem near the Minkowski geometry},
  author={Alexakis, Spyros and Feizmohammadi, Ali and Oksanen, Lauri},
  journal={Journal of the European Mathematical Society},
  volume={27},
  number={9},
  pages={3771--3792},
  year={2024}
}

@article{belishev1992reconstruction,
  title={To the reconstruction of a {R}iemannian manifold via its spectral data ({BC}--{M}ethod)},
  author={Belishev, Michael I and Kurylev, Yarosiav V},
  journal={Communications in Partial Differential Equations},
  volume={17},
  number={5-6},
  pages={767--804},
  year={1992},
  publisher={Taylor \& Francis}
}

@inproceedings{belishev1987approach,
  title={An approach to multidimensional inverse problems for the wave equation},
  author={Belishev, Mikhail Igorevich},
  booktitle={Doklady Akademii Nauk},
  volume={297},
  number={3},
  pages={524--527},
  year={1987},
  organization={Russian Academy of Sciences}
}

@book{grigis1994microlocal,
  title={Microlocal analysis for differential operators: an introduction},
  author={Grigis, Alain and Sj{\"o}strand, Johannes},
  volume={196},
  year={1994},
  publisher={Cambridge university press}
}

@article{guillemin1981oscillatory,
  title={Oscillatory integrals with singular symbols},
  author={Guillemin, V and Uhlmann, G},
  journal={Duke Math. J.},
  volume={48},
  number={1},
  pages={251--267},
  year={1981}
}

@article{de2015diffraction,
  title={Diffraction from conormal singularities},
  author={de Hoop, Maarten and Uhlmann, Gunther and Vasy, Andr{\'a}s},
  journal={Ann. Sci. {\'E}c. Norm. Sup{\'e}r.(4)},
  volume={48},
  number={2},
  pages={351--408},
  year={2015}
}

@book{kachalov2001inverse,
  title={Inverse boundary spectral problems},
  author={Kachalov, Alexander and Kurylev, Yaroslav and Lassas, Matti},
  year={2001},
  publisher={Chapman and Hall/CRC}
}

@article{kurylev2018connection,
  title={Inverse problems for the connection {L}aplacian},
  author={Kurylev, Yaroslav and Oksanen, Lauri and Paternain, Gabriel P},
  journal={Journal of Differential Geometry},
  volume={110},
  number={3},
  pages={457--494},
  year={2018},
  publisher={Lehigh University}
}

@article{lassas2014inverse,
  title={Inverse problem for the {R}iemannian wave equation with {D}irichlet data and {N}eumann data on disjoint sets},
  author={Lassas, Matti and Oksanen, Lauri},
  journal={Duke Mathematical Journal},
  volume={163},
  number={6},
  pages={1071--1103},
  year={2014},
  publisher={Duke University Press}
}

@article{stefanov2005stable,
  title={Stable determination of generic simple metrics from the hyperbolic {D}irichlet-to-{N}eumann map},
  author={Stefanov, Plamen and Uhlmann, Gunther},
  journal={International Mathematics Research Notices},
  volume={2005},
  number={17},
  pages={1047--1061},
  year={2005},
  publisher={OUP}
}

@book{dafermos2005hyberbolic,
  title={Hyberbolic conservation laws in continuum physics},
  author={Dafermos, Constantine M},
  year={2005},
  publisher={Springer}
}

@article{kushelman2024liouville,
  title={On {L}iouville’s theorem for conformal maps},
  author={Kushelman, Mathew and McGrath, Peter},
  journal={The American Mathematical Monthly},
  volume={131},
  number={7},
  pages={619--623},
  year={2024},
  publisher={Taylor \& Francis}
}

\end{document}